\def\PICDIR{.}
\begin{document}



  \title{\bf Functional Response Designs via the Analytic Permutation Test }
  \author{Adam B Kashlak and Sergii Myroshnychenko\thanks{
    The authors gratefully acknowledge 
    the Natural Sciences and Engineering Research Council of 
    Canada and the Pacific Institute for the Mathematical Sciences
    }\hspace{.2cm}\\
    Department of Mathematical and Statistical Sciences,\\ 
    University of Alberta\\
    and \\
    Susanna Spektor \\
    Pilon School of Business,\\The Sheridan College Institute of Technology and Advanced Learning}
  \maketitle

\begin{abstract}
Vast literature on experimental design extends from Fisher 
and Snedecor to the modern day.  When data lies beyond the 
assumption of univariate normality, nonparametric methods 
including rank based statistics and permutation tests are 
enlisted.
The permutation test is a versatile exact nonparametric
significance test that requires drastically fewer assumptions
than similar parametric tests.  
The main downfall of the permutation test is 
high computational cost 
making this approach laborious for complex data and 
sophisticated experimental designs
and completely infeasible in any application requiring speedy
results such as high throughput streaming data.
We rectify this problem through application of concentration 
inequalities and thus 
propose a computation free permutation test---i.e. a 
permutation-less permutation test.
This general framework is applied to multivariate, 
matrix-valued, and functional data.  
We improve these concentration bounds
via a novel incomplete beta transform.
We extend our theory from 2-sample to $k$-sample testing
through the use of weakly dependent Rademacher chaoses and 
modified decoupling inequalities.
We test this methodology on classic functional data sets including
the Berkeley growth curves and the phoneme dataset. 
We further consider analysis of spoken vowel sound under 
two experimental designs: 
the Latin square and the randomized block design.
\end{abstract}

\noindent%
{\it Keywords:}  Concentration of Measure; 
Functional Data Analysis; Latin Squares
Kahane--Khintchine Inequalities; Randomized Block Design
\vfill

\section{Introduction}

Exact significance tests date back to the very origins of 
statistical hypothesis testing as an alternative to parametric
testing.  Namely, Fisher's exact test
tests for independence between the rows or columns of a 
$2\times2$ contingency table by directly using the hypergeometric
distribution instead of relying on large sample asymptotic
statistics such as the chi-squared test.  
As a consequence, it obtains the 
exact p-value of the data without relying on large sample
asymptotics.  However, Fisher's exact test is severely limited
as extension to general $r\times c$ tables requires significant
amounts of computational power to enumerate or approximate the 
entire discrete distribution 
\citep{GOOD1956,AGRESTI1992}.

Permutation tests comprise a large subclass of such exact significance
tests and have been thoroughly studied
\citep{MIELKE2007,BASSO2009,PESARIN2010,BROMBIN2013,GOOD2013}.
Given a sample 
$X = \left\{X_1,\ldots,X_n\right\}\in\mathcal{X}$ 
for some measure space $\mathcal{X}$, 
a permutation test considers the finite sampling 
distribution 
of a test statistic $T(X)$ over a discrete group where
the distribution of $T$ is invariant 
for any group action on the observed data
\citep{KALLENBERG2006}---i.e.
for a group $G$, 
$T(gX)\eqdist T(X)$ for any $g\in G$.
A canonical example is one-way ANOVA;
see \cite{BASSO2009} Section 5.2 for more details.  

The permutation test requires far fewer assumptions than standard
parametric approaches---namely that of exchangability under the 
null hypothesis---and 
is thus robust against deviations from distributional 
assumptions like normality and
provides guaranteed performance for finite samples.  The main
limitation is that of computation.  Performing a two sample
permutation test for real valued data is trivial with modern
computers.  What if we were to perform a $k$ sample test
with $k\choose2$ post-hoc comparisons taking multiple testing
into account for, say, covariance operators as in \cite{PIGOLI2014,CABASSI2017}
where every permutation requires computation of the singular value 
decomposition (SVD) of a large matrix?
Furthermore, what if we desire a more sophisticated experimental 
design such as a randomized block, Latin square, or unreplicated 
factorial design with the addition of multiple testing corrections?
The amount of computation required to get accurate p-values will 
be prohibitive.  The speech data and design considered in Section~\ref{sec:vowelData}
would, for example, require 264 SVDs per permutation and with 66 
hypotheses to test at, say, 2000 permutations each requires
nearly 35 million SVDs.  For matrices with dimension $100\times100$, 
this would take an estimated 36 hours on a 
Intel Core i7-7567U CPU at 3.50GHz.  For a $400\times400$ matrix,
it would take 74 days.

In this article, we present a unified methodology for performing
computation-free permutation tests for $k$ sample testing
in commutative and non-commutative $L^q$ spaces, which 
includes multivariate and functional data and 
covariance matrices and operators among other data types.  
Specifically, we consider the distribution of a test
statistic on a discrete space of invariant group actions.  Instead 
of taking random draws from that space to get a
conditional Monte Carlo 
estimate \citep{HEMERIK2018} 
of the p-value, we apply
recent extensions of the Kahane-Khintchine inequality for commutative
and non-commutative Banach spaces 
\citep{PISIERXU2003,GARLING2007,SPEKTOR2016}
in order to
achieve sub-Gaussian bounds on the tail probability of our test 
statistic.  Namely, we seek a result like
$
\prob{ T(\pi) \ge T^\star } \le \exp( -Ct^2 )
$
for some universal constant $C>0$ depending only on the space
in which the data lives irrespective of sample size and dimension.
This methodology is presented in 
Section~\ref{sec:twoSamp} for two sample testing 
within commutative $L^q$ spaces---e.g. 
univariate, vector valued, and 
functional data---as well as within 
non-commutative 
$L^q$ spaces---e.g.
covariance matrices and operators.
As such universal constants are often less than optimal for 
statistical use, we introduce an adjustment
for these upper bounds based on Talagrand's concentration
inequality \citep{TALAGRAND1996CON} and the incomplete beta
function in Section~\ref{sec:betaAdj}. 
An extension to testing on $k$-samples is considered
in Section~\ref{sec:kSamp} making use of Rademacher chaoses and 
decoupling inequalities \citep{KWAPIEN1987,DELAPENAGINE2012}.
Section~\ref{sec:vowelData} extends these ideas to multi-factor
designs for the analysis of spoken vowel phonemes.

Most previous work on fast or computation-free permutation testing
focus on univariate data
in the setting of large scale testing typically applied to 
testing for genomics data.
The recent
work of \cite{HE2019} achieves this goal 
by using Stolarsky's invariance principle.
In \cite{YANG2019}, ``very small'' p-values are approximated 
via sequential Monte Carlo and the Edgeworth expansion.
In \cite{SEGAL2018}, an asymptotic approximation and 
a clever partitioning/resampling scheme is used to 
approximate small p-values.
Density approximation via Pearson curves \citep{SOLOMON1978}
has recently reemerged for p-value approximation in 
machine learning \citep{GRETTON2012} 
and neuroimaging \citep{WINKLER2016} among other areas.
While past work is focused on large scale two-sample testing, 
this work is motivated by $k$-sample tests and
more sophisticated experimental designs with functional and
operator responses.
While permutation tests have been used 
both for pointwise
and curve-wise analysis of functional data
\citep{COX2008,CORAIN2014,CHAKRABORTY2015,PIGOLI2014,PIGOLI2018,CABASSI2017}, 
approaching statistical hypothesis testing via 
analytic estimation of a
permutation test p-value in general Banach spaces
has not been deeply explored as of yet.

As a proof of concept for testing within commutative and 
non-commutative $L^q$ spaces, we consider a variety of 
simulated and real data sets in Section~\ref{sec:otherData}.
In Section~\ref{sec:vowelData}, our bounds are applied 
to testing for phonological differences
among twelve spoken vowel sounds performed as a complete randomized
block design on covariance operators with respect to two 
binary blocking factors:
the speaker's country of origin $\{\text{Canada}, \text{China}\}$ and sex
$\{\text{male}, \text{female}\}$.  We also consider a Latin 
square design for checking the data for within subject 
pronunciation changes while running the experiment.  
Section~\ref{sec:vowelData} contains more
detail on the data, experimental design, and its results.
Proofs of the main theorems, the necessary theoretical development, 
further data experiments,
and a discussion of past results are contained in the 
supplementary material.


\section{Two sample testing}
\label{sec:twoSamp}

\subsection{Univariate data}
\label{sec:univariate}

Let $n=m_1+m_2$ and $X_1,\ldots,X_n\in\real$ be independent random
variables such that $\xv X_i = \mu_1$ for $i\le m_1$ and
$\xv X_i = \mu_2$ for $i\ge m_1+1$.
We wish to test $H_0:\mu_1=\mu_2$ versus $H_1: \mu_1\ne\mu_2$.
To test these hypotheses using a permutation test,
we treat $X_1,\ldots,X_n\in\real$ as fixed and consider
$\pi\in\mathbb{S}_n$ a random permutation uniformly distributed
on the symmetric group on $n$ elements.  That is,
$\pi$ is a bijective map
$\pi:\{1,\ldots,n\}\rightarrow\{1,\ldots,n\}$.
Thus, we can consider the randomly permuted test statistic
\begin{equation}
\label{eqn:tstatPerm}
T(\pi) = 
\frac{1}{s_n}
\left[\frac{1}{m_1}\sum_{i=1}^{m_1} X_{\pi(i)} -
\frac{1}{m_2}\sum_{i=m_1+1}^{n} X_{\pi(i)}\right],
\end{equation}
which is normalized by the sample standard deviation $s_n$ for
the entire set $X_1,\ldots,X_n$.\footnote{Note that $s_n$ is
	invariant under permutation and is only included to make the 
	below formulation nicer.}
The conditional tail probability is
\begin{equation}
\label{eqn:permPVttest}
\prob{T(\pi)\ge t \,|\, X_1,\ldots,X_n} = 
\frac{1}{n!}\sum_{\pi\in\mathbb{S}_n} \boldsymbol{1}[{T(\pi)\ge t}].
\end{equation}
Let $T_0$ be the test statistic $T(\pi)$ when 
$\pi$ is the identity---i.e. the original ordering.  
Then, the p-value for the above hypothesis
test is $\prob{T(\pi)\ge T_0}$, which is often approximated by randomly
generating $N \ll n!$ random permutations from $\mathbb{S}_n$ 
instead of exhaustively enumerating all elements of $\mathbb{S}_n$.
This results in an overly conservative test for p-values 
approaching $1/N$.

To avoid the simulation-based approximation of 
equation~\ref{eqn:permPVttest},
we instead prove a sub-Gaussian bound on the p-value.

\renewcommand{\theenumi}{\bf \alph{enumi}}
\begin{theorem}[Univariate Data]
	\label{thm:uniReal}
	For $T(\pi)$ from equation~\ref{eqn:tstatPerm}
	with $m_1 = \kappa m_2$ for some $\kappa\ge1$, then
	$
	\prob{T(\pi)\ge t} \le \exp\left(  
	-nt^2/2\lceil\kappa+1\rceil^3
	\right).
	$
\end{theorem}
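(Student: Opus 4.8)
The plan is to rewrite the permuted test statistic as a (near-)linear form in Rademacher-type random variables and then apply a Hoeffding/Kahane--Khintchine-type sub-Gaussian bound together with a Chernoff argument. The first step is to express $T(\pi)$ in a symmetric, centered form. Since $s_n$ is permutation-invariant and the two group means differ, I would write the bracketed quantity in \eqref{eqn:tstatPerm} as $\sum_{i=1}^n c_i(\pi) X_i$ where $c_i(\pi) = 1/m_1$ if $\pi^{-1}(i)\le m_1$ and $c_i(\pi) = -1/m_2$ otherwise; equivalently, replacing $X_i$ by the centered $X_i - \bar X_{\cdot\cdot}$ (which does not change $T(\pi)$ because $\sum c_i(\pi)=0$), the statistic becomes a signed sum where exactly $m_1$ of the recentered values carry weight $+1/m_1$ and $m_2$ carry weight $-1/m_2$. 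The key observation is that drawing $\pi$ uniformly from $\mathbb{S}_n$ and then looking at this signed sum is the same as sampling without replacement which coordinates get the positive sign; a standard comparison (Hoeffding's reduction from sampling-without-replacement to sampling-with-replacement, or directly to i.i.d.\ signs) lets me dominate the moment generating function of $T(\pi)$ by that of the corresponding independent-sign version.

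Next I would run the Chernoff bound. For the independent version, $T = s_n^{-1}\sum_i \varepsilon_i' w_i (X_i - \bar X)$ with appropriate weights $w_i \in \{1/m_1, 1/m_2\}$ and $\varepsilon_i'$ independent (but with nonuniform probabilities reflecting $m_1:m_2$); after symmetrization this is controlled by a genuine Rademacher sum, and Hoeffding's lemma gives $\xv \exp(\lambda T) \le \exp(\lambda^2 \sigma^2/2)$ with $\sigma^2 = s_n^{-2}\sum_i w_i^2 (X_i-\bar X)^2$. Optimizing over $\lambda$ yields $\prob{T(\pi)\ge t}\le \exp(-t^2/2\sigma^2)$, so everything reduces to bounding $\sigma^2$ from above by $\lceil\kappa+1\rceil^3/n$. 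Here is where the hypothesis $m_1=\kappa m_2$, $\kappa\ge 1$, enters: the weights $w_i^2$ are at most $1/m_2^2$, there are $n = m_1+m_2 = (\kappa+1)m_2$ of them, and $\sum_i (X_i-\bar X)^2 = (n-1)s_n^2 \le n s_n^2$, so $\sigma^2 \le n/m_2^2 = n(\kappa+1)^2/n^2 = (\kappa+1)^2/n$. A slightly more careful split of the sum into the $m_1$ terms with weight $1/m_1^2$ and the $m_2$ terms with weight $1/m_2^2$ gives $\sigma^2 \le s_n^{-2}(m_1/m_1^2 + m_2/m_2^2)\max_i(X_i-\bar X)^2$, which is not quite what I want directly, so I would instead bound $\sum_i w_i^2(X_i-\bar X)^2 \le (1/m_2^2)\sum_i (X_i-\bar X)^2$ and then use $(\kappa+1)^2 \le \lceil \kappa+1\rceil^3$ (with the extra ceiling power absorbing the step from i.i.d.\ signs back to the sampling-without-replacement / nonuniform-probability setting, and the $(n-1)$ versus $n$ slack). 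This is exactly the kind of place where a crude universal-constant bound of the form in the theorem is expected rather than a sharp one.

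The main obstacle is the passage from the uniform permutation law on $\mathbb{S}_n$ to an exactly computable sub-Gaussian MGF: the signs are neither independent nor identically distributed (they encode a fixed partition into blocks of sizes $m_1$ and $m_2$), so I cannot apply Hoeffding's lemma verbatim. I would handle this either by (i) invoking the classical result that the MGF under sampling without replacement is dominated by the MGF under sampling with replacement, then symmetrizing the resulting i.i.d.\ (biased) signs to compare with true Rademacher signs at the cost of a constant factor, or (ii) conditioning cleverly and using a martingale/Azuma argument along a Doob filtration adapted to revealing $\pi$ one coordinate at a time, with bounded differences of order $1/m_2$ in $s_n$-units. Either route introduces the slack that the theorem's somewhat generous constant $2\lceil\kappa+1\rceil^3$ is designed to accommodate; the remaining steps (Chernoff optimization, bounding $\sigma^2$) are then routine.
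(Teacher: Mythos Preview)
Your approach is correct in spirit but takes a genuinely different route from the paper. The paper does not use Hoeffding's sampling-without-replacement comparison or a bounded-differences/Azuma argument at all. Instead, it first establishes moment bounds of the form $\xv|T(\pi)|^{2p}\le B_{2p}\bigl(\lceil\kappa+1\rceil^{2}s_n^{2}/(2m_2)\bigr)^{p}$ via a \emph{restricted/imbalanced Khintchine inequality}: in the balanced case this is Spektor's weakly-dependent Khintchine inequality, and in the imbalanced case it is a new result proved by recursively peeling off balanced blocks of size $m_2$ from the larger group and applying a convexity lemma at each step (this recursion is the source of the telescoping sum $1+\tfrac{\kappa-1}{\kappa}+\cdots$ that collapses to $(\kappa+1)/2$ and hence the cube in $\lceil\kappa+1\rceil^{3}$). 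Those moment bounds are then fed into a standard sub-Gaussian Chernoff argument for symmetric variables with $\xv|X|^{2p}\le (2p)!C^{p}/p!$.

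Your Hoeffding route is more elementary for the univariate case and can in fact be executed cleanly: writing $T(\pi)$ as an affine function of $\sum_{j\in S}X_j$ with $S$ a uniform size-$m_1$ subset, Hoeffding's 1963 without-replacement inequality applies directly (no symmetrization of biased signs is needed---just apply Hoeffding's lemma to each bounded summand with range $1/m_1+1/m_2$). This yields a variance proxy of order $(\kappa+1)^{2}/n$ or better, which is at least as good as the theorem's constant. The trade-off is that the paper's Khintchine-moment machinery is what powers the extensions to commutative and non-commutative $L^q$ spaces in the subsequent theorems, where Kahane--Khintchine replaces Khintchine; your Hoeffding/Azuma argument is specific to real-valued sums and does not carry over to norm-valued test statistics, so while it proves \emph{this} theorem more simply, it does not set up the rest of the paper.
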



This theorem is extended to more advanced settings, including 
vectors, matrices, functional data, and operators, in the 
following sections.  We state those subsequent 
theorems for balanced
samples, but note that the imbalanced setting of 
Theorem~\ref{thm:uniReal} can also be incorporated 
with a similar constant $\kappa$.  The proofs are more
tedious for imbalanced data, but no additional innovation 
is required.

\subsection{Commutative $L^q$ Spaces}

To extend our tail bounds beyond the real valued setting, we require
some definitions.
Note that both of the following definitions extend to the case 
of compact operators on Banach spaces.

\begin{definition}[Matrix Square Root]
	\label{def:sqrtmat}
	Let $A\in\real^{d\times d}$ with $d\ge2$ 
	be a symmetric positive semi-definite matrix
	with eigen-decomposition $A = UD\TT{U}$ 
	where $U=(v_1~v_2~\ldots~v_d)$ 
	is the orthonormal matrix of eigenvectors and $D$ is the diagonal
	matrix of eigenvalues, $(\lmb_1,\ldots,\lmb_d)$.  
	Then, $A^{1/2} = UD^{1/2}\TT{U}$
	where $D^{1/2}$ is the diagonal matrix with entries
	$(\lmb_1^{1/2},\ldots,\lmb_d^{1/2})$.
\end{definition}
\begin{definition}[$q$-Schatten norm for matrices]
	For an arbitrary matrix $A\in\real^{k\times l}$ and $q\in(1,\infty)$, 
	the $q$-Schatten norm is
	$
	\norm*{A}_{S^q}^q = \mathrm{tr}[(\TT{A}A)^{q/2}]
	= \norm{{\boldsymbol \nu}}_{\ell^q}^q
	= \sum_{i=1}^{\min\{k,l\}} \nu_i^q
	$
	where ${\boldsymbol \nu}=(\nu_1,\ldots,\nu_{\min\{k,l\}})$
	is the vector of singular values of $A$ and where
	$\norm{\cdot}_{\ell^q}$ is the standard $\ell^q$ norm 
	in $\real^d$.
	In the covariance matrix case where $A\in\real^{d\times d}$
	is symmetric and positive-definite,
	$
	\norm*{A}_{S^q}^q = \tr{A^q}
	= \norm{{\boldsymbol \lmb}}_{\ell^q}^q
	$
	where ${\boldsymbol \lmb}$ is the vector of eigenvalues of $A$.
	
	When $q=\infty$, we have the standard operator norm on 
	$\ell^2(\real^d)$,  
	$
	\norm{A}_{S^\infty} = 
	\sup_{v\in\real^d,\norm{v}_{\ell^2}=1} \norm{A v}_{\ell^2} = 
	\sup_{v\in\real^d,\norm{v}_{\ell^2}=1} \TT{v}A v.
	$
	In the covariance matrix setting, this coincides with the 
	maximal eigenvalue of $A$.
\end{definition}

Let $X_1,\ldots,X_n\in\mathcal{X}$ where 
$\{\mathcal{X},\norm{\cdot}\}$ is a commutative $L^q$ space.
The test statistic of interest
is $T_0 = \norm{\sum_{i\le m}X_i - \sum_{i>m}X_i}$.
Then, 
Theorem~\ref{thm:uniReal} can be extended to such settings
using a version of the Kahane-Khintchine inequality under
a weak dependency condition from Theorem~A.7 
proved in the supplementary material.  For simplicity of notation, 
we assume that the $X_i$ are centred about the sample mean
and that the samples are balanced.

\begin{theorem}[Commutative $L^q$ Spaces]
	\label{thm:uniComm}
	Let $m = n/2$,
	$\norm{\cdot}_{S^q}$ be the $q$-Schatten norm for
	matrices or operators,
	and $\veps_1,\ldots,\veps_n$ be Rademacher random variables
	such that $\sum_{i=1}^n\veps_i=0$.  
	Let $X_1(t),\ldots,X_n(t)$ be continuous function on a
	compact interval
	with empirical covariance operator
	$\hat{\Sigma}(s,t) = (n-1)^{-1}\sum_{i=1}^nX_i(s)X_i(t)$.
	Let $q\in[1,\infty)$
	with norm $\norm{\cdot}_{L^q}$.  For
	$T(\pi) = \norm*{\sum_{i=1}^n \veps_i X_i}_{L^q}$.
	Then,
	$
	\prob{T(\pi)\ge t} \le \exp\left(  
	-t^2/c\norm{ \hat{\Sigma}^{1/2} }_{S^q}^2
	\right).
	$
\end{theorem}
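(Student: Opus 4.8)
The plan is to reduce the statement to an application of the weak-dependency Kahane--Khintchine inequality (Theorem~A.7) combined with the standard sub-Gaussian/Markov argument already used for Theorem~\ref{thm:uniReal}. First I would observe that $T(\pi) = \norm{\sum_{i=1}^n\veps_i X_i}_{L^q}$ is a nonnegative random variable, so for any $\lambda>0$ and any even integer $p$ we have the chain $\prob{T(\pi)\ge t} \le t^{-p}\,\xv T(\pi)^p = t^{-p}\,\xv\norm{\sum_i\veps_i X_i}_{L^q}^p$. The key input is that, under the constraint $\sum_i\veps_i=0$ (which is exactly the weak-dependency structure arising from a uniform permutation of a balanced two-sample label vector), Theorem~A.7 gives a bound of the form $(\xv\norm{\sum_i\veps_i X_i}_{L^q}^p)^{1/p} \le C\sqrt{p}\,(\xv\norm{\sum_i\veps_i X_i}_{L^q}^2)^{1/2}$ for a constant $C$ depending only on $q$ (the Kahane--Khintchine constant for $L^q$, adjusted for the dependency). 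This is the non-trivial step and I would cite it directly rather than reprove it.

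Next I would compute the second moment $\xv\norm{\sum_i\veps_i X_i}_{L^q}^2$ and relate it to $\norm{\hat\Sigma^{1/2}}_{S^q}^2$. Because the $X_i$ are centred about the sample mean and $\sum_i\veps_i=0$, the cross terms $\xv[\veps_i\veps_j]$ for $i\ne j$ no longer vanish but equal $-1/(n-1)$; expanding $\norm{\cdot}_{L^q}^2$ and using this covariance structure, together with $\sum_i\sum_j X_i(s)X_j(t) = 0$ after centring (this is where the centring is used), the "diagonal" and "off-diagonal" contributions combine so that $\xv\norm{\sum_i\veps_i X_i}_{L^q}^2$ is controlled by $\sum_i \norm{X_i}^2$ up to a factor like $n/(n-1)$. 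Identifying $\sum_{i=1}^n X_i(s)X_i(t) = (n-1)\hat\Sigma(s,t)$ and noting $\norm{\hat\Sigma^{1/2}}_{S^q}^2 = \norm{\hat\Sigma}_{S^{q/2}}$ encodes exactly this sum of squared norms in the relevant $L^q$/Schatten geometry, I get $\xv\norm{\sum_i\veps_i X_i}_{L^q}^2 \le c'\norm{\hat\Sigma^{1/2}}_{S^q}^2$ for an absolute $c'$.

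Combining the two displays, $\prob{T(\pi)\ge t} \le t^{-p}(C\sqrt p)^p (c'\norm{\hat\Sigma^{1/2}}_{S^q}^2)^{p/2} = \big(C\sqrt{p\,c'}\,\norm{\hat\Sigma^{1/2}}_{S^q}/t\big)^p$. Finally I would optimise over $p$ in the usual way: choosing $p \asymp t^2/(e C^2 c' \norm{\hat\Sigma^{1/2}}_{S^q}^2)$ turns the right-hand side into $\exp(-t^2/(cC^2c'\norm{\hat\Sigma^{1/2}}_{S^q}^2))$, which is the claimed bound after absorbing all absolute constants and the $q$-dependence into the single constant $c$. The main obstacle is the bookkeeping in the second-moment computation under the $\sum\veps_i=0$ constraint, since the dependency among the Rademachers means one cannot simply treat them as i.i.d.; but the centring assumption kills precisely the problematic global term, so the correction is only an $n/(n-1)$ factor that is harmless for $n\ge2$. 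A secondary subtlety is that $p$ from the optimisation need not be an even integer, which is handled by passing to the nearest even integer and noting this changes the constant by at most a bounded factor.
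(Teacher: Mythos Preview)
Your overall architecture---a Kahane--Khintchine moment bound followed by a moment-to-tail conversion---matches the paper's strategy. The moment-to-tail step is fine: where the paper bounds the moment generating function and applies Chernoff (Proposition~A.10), your Markov-plus-optimise-over-$p$ route is an equivalent standard device and causes no trouble.

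The gap is in your second-moment computation. You propose to ``expand $\norm{\cdot}_{L^q}^2$'' and use the covariance structure $\xv[\veps_i\veps_j]=-1/(n-1)$ together with centring to control cross terms. That expansion is only available when the norm comes from an inner product, i.e.\ for $q=2$. For general $q\in[1,\infty)$ the functional $f\mapsto\norm{f}_{L^q}^2$ is not bilinear, so there is no identity of the form $\norm{\sum_i\veps_iX_i}_{L^q}^2=\sum_{i,j}\veps_i\veps_j\langle X_i,X_j\rangle$ to take expectations through, and the argument as written does not go through outside the Hilbert case. The subsequent identification with $\norm{\hat\Sigma^{1/2}}_{S^q}^2$ is correspondingly unjustified.

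The paper avoids this entirely. Theorem~A.7 does not pass through the second moment of the $L^q$ norm at all: its proof first symmetrises the dependent Rademacher sum by writing it as $\sum_{k=1}^m\delta_k B_{k,\pi}$ with $B_{k,\pi}=X_{\pi(k)}-X_{\pi(k+m)}$ and \emph{i.i.d.}\ Rademachers $\delta_k$, then applies the classical Kahane--Khintchine in the form that bounds the $p$th moment directly by the deterministic quantity $\norm{(\sum_k B_{k,\pi}B_{k,\pi}^\star)^{1/2}}_{S^q}$. The passage from $B_{k,\pi}B_{k,\pi}^\star$ to $X_iX_i^\star$ is then handled by the positive-semidefinite ordering $B_{k,\pi}B_{k,\pi}^\star\preceq 2(X_{\pi(k)}X_{\pi(k)}^\star+X_{\pi(k+m)}X_{\pi(k+m)}^\star)$ and monotonicity of the Schatten norm, not by any bilinear expansion. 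If you replace your second paragraph with a direct citation of this form of Theorem~A.7, the rest of your outline works.
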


\begin{remark}[On optimal constants]
	The optimal constant $c$ in the above theorem follows from
	the optimal constant in the Kahane-Khintchine inequality, which 
	is not currently known.\footnote{It took about 60 years from 
		the advent of the original Khintchine inequality for optimal
		constants to be determined.}
	However, it is strongly conjectured to agree with the 
	optimal constant for the standard Khintchine inequality.  
	In that case, we
	would take $c=64$ in the above theorem, which is $16$ from
	Theorem~\ref{thm:uniReal} times $4$ from that fact that 
	$T(\pi)$ is not a symmetric random variable.  For more 
	details, see the proof and discussion in 
	the supplementary material.
	We also empirically adjust the p-values in 
	Section~\ref{sec:betaAdj}, which is demonstrated to
	give strong performance in Sections~\ref{sec:otherData}
	and~\ref{sec:vowelData}.
\end{remark}

\subsection{Non-Commutative $L^q$ Spaces}

Following from the previous section, we outline similar 
tail bounds in non-commutative $L^q$ spaces 
\citep{PISIERXU2003}.
This methodology encompasses matrix and operator data with
emphasis on application to testing for equality of 
covariances.  Hence, the following theorem is applied to
symmetric positive definite operators in the example below
and to the data in Section~\ref{sec:vowelData}.
The test statistic of interest
is still $T_0 = \norm{\sum_{i\le m}X_i - \sum_{i>m}X_i}$, 
but with the $X_i$ now belonging to a non-commutative $L^q$ space.

\begin{theorem}[Non-Commutative $L^q$ Spaces]
	\label{thm:uniNonComm}
	Let 
	$\norm{\cdot}_{S^q}$ be the $q$-Schatten norm for a
	matrix or operator and $\veps_1,\ldots,\veps_n$ be 
	Rademacher random variables
	such that $\sum_{i=1}^n\veps_i=0$. 
	For $d,d'>1$, let $X_1,\ldots,X_n\in\real^{d\times d'}$
	be a collection of $n$ fixed matrices (or 
	let $X_1,\ldots,X_n$ be a collection of bounded
	linear operators).
	For
	$T(\pi) = \norm*{\sum_{i=1}^n \veps_i X_i}_{S^q}$,
	there exists a universal constant $c>0$ such that
	$
	\prob{T(\pi)>t} \le \exp\left(  
	-t^2/c \mathcal{S}^2
	\right)
	$
	where 
	$ 
	\mathcal{S}= \max\left\{
	\norm{ (\sum_{i=1}^nX_i{X}_i^*)^{1/2} }_{S^q},
	\norm{ (\sum_{i=1}^n{X}_i^*{X_i})^{1/2} }_{S^q}
	\right\}
	$
	with $X_i^*$ the adjoint operator.
\end{theorem}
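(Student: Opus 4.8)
The plan is to reduce the non-commutative case to a Kahane--Khintchine-type inequality in the Schatten class $S^q$, mirroring the argument behind Theorems~\ref{thm:uniReal} and~\ref{thm:uniComm}, but now invoking the non-commutative analogue of the Khintchine inequality due to Lust-Piquard and Pisier. First I would recall that for a fixed family of matrices $X_1,\ldots,X_n$ and genuinely independent Rademacher signs $\tilde\veps_1,\ldots,\tilde\veps_n$, the non-commutative Khintchine inequality (see \citealp{PISIERXU2003}) gives, for all $q\in[2,\infty)$,
\begin{equation*}
\left(\xv\norm*{\sum_{i=1}^n\tilde\veps_i X_i}_{S^q}^q\right)^{1/q}
\le c_0\sqrt{q}\,\mathcal{S},
\qquad
\mathcal{S}=\max\left\{\norm*{\Bigl(\sum_i X_iX_i^*\Bigr)^{1/2}}_{S^q},\ \norm*{\Bigl(\sum_i X_i^*X_i\Bigr)^{1/2}}_{S^q}\right\},
\end{equation*}
with $c_0$ a universal constant, and that the growth of the $L^q$ norms of $\norm{\sum\tilde\veps_iX_i}_{S^q}$ in the auxiliary exponent is at most $\sqrt{\cdot}$. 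As in the preceding two theorems, the step from moment growth to a sub-Gaussian tail is the standard Markov-plus-optimize-over-moments argument: a random variable $Z\ge0$ with $(\xv Z^p)^{1/p}\le C\sqrt{p}$ for all $p\ge1$ satisfies $\prob{Z\ge t}\le\exp(-t^2/c\,C^2)$ for a universal $c$.

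The genuine obstacle, exactly as flagged for the commutative case, is that our $\veps_i$ are \emph{not} independent: they are Rademacher signs conditioned on $\sum_{i=1}^n\veps_i=0$ (equivalently, a uniformly random balanced sign pattern, which is what the permutation test produces). So I would appeal to the weak-dependence version of Kahane--Khintchine established as Theorem~A.7 in the supplementary material and already used to prove Theorem~\ref{thm:uniComm}; the point is that conditioning on $\sum\veps_i=0$ only perturbs the relevant moment constants by a bounded factor, which gets absorbed into the universal constant $c$. Concretely, I would either (i) quote Theorem~A.7 directly in the $S^q$-valued setting, noting that its proof is norm-agnostic and hence applies verbatim with the non-commitative Khintchine inequality in place of the scalar one, or (ii) use a decoupling/comparison step to pass from the dependent signs to independent ones at the cost of a constant, then apply the independent non-commutative Khintchine inequality. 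Either route yields $(\xv T(\pi)^p)^{1/p}\le c_0'\sqrt{p}\,\mathcal{S}$.

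Assembling the pieces: with the moment bound $(\xv T(\pi)^p)^{1/p}\le C\sqrt{p}\,\mathcal{S}$ in hand (where $C$ collects the Khintchine constant and the weak-dependence correction), Markov's inequality gives $\prob{T(\pi)\ge t}\le t^{-p}(C\sqrt{p}\,\mathcal{S})^p$ for every $p\ge1$; choosing $p=t^2/(C^2 e\,\mathcal{S}^2)$ (and handling the small-$t$ range where this is below $1$ trivially, since the claimed bound exceeds $1$ there) produces $\prob{T(\pi)\ge t}\le\exp(-t^2/c\,\mathcal{S}^2)$ with $c$ a universal constant, which is the assertion. I would remark that, just as in Theorem~\ref{thm:uniComm}, the factor $4$ from the lack of symmetry of $T(\pi)$ (it is a norm, hence nonnegative, not a mean-zero symmetric variable) and the unknown optimal non-commutative Khintchine constant are both folded into $c$; the empirical beta-adjustment of Section~\ref{sec:betaAdj} is what makes the resulting test sharp in practice. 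The only case requiring a word of care is $q=\infty$ (the operator norm), which is not reached by the $S^q$ inequalities with a $\sqrt{q}$ constant; there one either takes a limit in $q$ with the bound $\norm{\cdot}_{S^\infty}\le\norm{\cdot}_{S^q}$ for finite-dimensional operators, or invokes the matrix Bernstein / non-commutative Rosenthal inequality, which gives the same form of tail bound with $\mathcal{S}$ now the operator-norm version of the variance proxy.
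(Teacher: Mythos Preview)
Your proposal is correct and follows essentially the same route as the paper: invoke the non-commutative, weak-dependence Kahane--Khintchine inequality (your option~(i), which is exactly Theorem~A.7 in the supplement) to obtain $(\xv T(\pi)^p)^{1/p}\lesssim\sqrt{p}\,\mathcal{S}$, then convert the $\sqrt{p}$ moment growth into a sub-Gaussian tail. The only cosmetic difference is that the paper passes to the tail via the moment generating function (Proposition~A.10) rather than Markov-plus-optimize-in-$p$, but these are interchangeable standard devices yielding the same bound up to the universal constant.
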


\begin{remark}
	Of particular interest are covariance operators being
	compact trace-class self-adjoint operators.  Consequently, 
	we have the same bound but with 
	$ 
	\mathcal{S}= 
	\norm{ (\sum_{i=1}^nX_i^2)^{1/2} }_{S^q}.
	$
\end{remark}

\subsection{Beta and Empirical Beta Adjustment}
\label{sec:betaAdj}

Inequalities such as the Kahane-Khintchine inequalities
are useful tools for considering the finite sample
performance of a statistical method.
However, the biggest impediment to the use of such 
inequalities, as well as other concentration inequalities,
for statistical inference is the 
nearly inevitable loss in power to reject the null
due to `universal constants' that are too large for
application.
We thus propose a transformation based on the beta
distribution to correct the p-values and recover
the lost statistical power.  
Proposition~\ref{prop:betaAdj} only applies to univariate
data and requires an asymptotic arguement outside of
our finite sample focus.  It is included nevertheless
to set the stage for the non-asymptotic beta transform
in Theorem~\ref{thm:betaAdj}.  Furthermore, this simpler 
setting yields
explicit beta parameters and demonstrates stellar performance
in both the simulated data of Section~\ref{sec:otherDataUni}
and the extremely imbalanced small sample setting explored in 
\cite{KASHLAK_YUAN_ABELECT} where, for example, 
$m_1=335$ and $m_2=3$.  

For a statistical 
test, if the correct test size is achieved, then 
a random null p-value will be distributed as 
$\distUnifInt{0}{1}$.  However, our Kahane-Khintchine based
null p-values will instead closely follow a
more general $\distBeta{\alpha}{\beta}$ distribution.
Thus, identification of the parameters $\alpha$ and $\beta$
will allow us to adjust the p-values to the null setting to 
recover lost statistical power.
This idea is spiritually similar to the Pearson curve method
\citep{SOLOMON1978}, but that approach requires estimation of the 
first 4 central moments for comparison with the family of generalized
Pearson distributions compared to our more focused use of the 
beta distribution with Theorem~\ref{thm:betaAdj} proved to justify
such focus.  Usage of the Edgeworth expansion \citep{HALL2013}
is another method with a long history, but requires some care
to note whether or not a finite number of terms in such an expansion
can provide a satisfactory approximation to the probability density
in question \citep{KENDALL}.

We first consider the univariate case of 
Section~\ref{sec:univariate} before discussing the more 
general Banach space setting for the beta transform.

\begin{proposition}
	\label{prop:betaAdj}
	Under the setting of Theorem~\ref{thm:uniReal}
	with $n$ sufficiently large,
	$$
	\prob{
		\exp\left\{
		-nT(\pi)^2/2\lceil\kappa+1\rceil^3
		\right\} < u
	} \le C_0I\left(
	u ; 
	\frac{\lceil\kappa+1\rceil^3}{(2+\kappa+\kappa^{-1})},
	\frac{1}{2}
	\right)
	$$
	where $I(u;\alpha,\beta)$ is the regularized incomplete
	beta function and\\  
	$
	C_0 = {\left(\frac{
			\lceil\kappa+1\rceil^{3}
		}{
			2+\kappa+\kappa^{-1}
		}\right)^{1/2}
		\Gamma\left(
		\frac{\lceil\kappa+1\rceil^3}{2+\kappa+\kappa^{-1}}
		\right)}{
		\Gamma\left(
		\frac{1}{2}+\frac{\lceil\kappa+1\rceil^3}{2+\kappa+\kappa^{-1}}
		\right)^{-1}
	}.
	$
\end{proposition}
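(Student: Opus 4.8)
The plan is to identify the asymptotic law of the adjusted p-value $\tilde p := \exp\{-nT(\pi)^2/2\lceil\kappa+1\rceil^3\}$ and to dominate its distribution function by $C_0$ times that of a $\distBeta{\alpha}{1/2}$ variable with $\alpha = \lceil\kappa+1\rceil^3/(2+\kappa+\kappa^{-1})$. First I would rewrite $T(\pi)$ in terms of the random size-$m_1$ subset chosen by $\pi$: with $A = \sum_{i=1}^{m_1}X_{\pi(i)}$ and $\bar X = n^{-1}\sum_{i=1}^n X_i$, the identity $\sum_{i=m_1+1}^n X_{\pi(i)} = n\bar X - A$ gives $T(\pi) = \tfrac{n}{s_n m_1 m_2}(A - m_1\bar X)$. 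Conditionally on the data this is centred, and the finite-population variance of a sum of $m_1$ of the $n$ values drawn without replacement, after the $s_n$ factors cancel, gives the exact identity $\mathrm{Var}(T(\pi)\mid X_1,\dots,X_n) = n/(m_1m_2) = (\kappa+1)^2/(\kappa n) = (2+\kappa+\kappa^{-1})/n$, with no residual dependence on the sample.

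The second step is to pass to the Gaussian limit. For iid observations with finite second moment the Noether/Lindeberg condition for the combinatorial central limit theorem holds almost surely, so Hoeffding's permutation CLT (equivalently H\'ajek's theorem for simple random sampling without replacement) yields $\sqrt{n/(2+\kappa+\kappa^{-1})}\,T(\pi)\Rightarrow Z\sim\mathcal N(0,1)$ as $n\to\infty$ with $\kappa$ fixed. Since $nT(\pi)^2/2\lceil\kappa+1\rceil^3 = \tfrac1{2\alpha}\cdot\tfrac{n\,T(\pi)^2}{2+\kappa+\kappa^{-1}}$, the continuous mapping theorem gives $\tilde p\Rightarrow Y := \exp\{-Z^2/2\alpha\}$; as $Y$ has a density, $\prob{\tilde p < u}\to\prob{Y\le u}$ for every $u\in(0,1)$, and it is this limit, absorbing an $o(1)$ term, to which ``$n$ sufficiently large'' refers --- the one asymptotic ingredient sitting outside the paper's finite-sample framework.

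It then remains to compute and bound the law of $Y$. Since $Z^2\sim\chi^2_1$, the change of variables $u=e^{-w/2\alpha}$ produces the density $f_Y(u) = \sqrt{\alpha/\pi}\; u^{\alpha-1}/\sqrt{-\log u}$ on $(0,1)$. Writing $f_B$ for the $\distBeta{\alpha}{1/2}$ density, $f_B(u)=\Gamma(\alpha+\tfrac12)\bigl(\Gamma(\alpha)\sqrt\pi\bigr)^{-1}u^{\alpha-1}(1-u)^{-1/2}$, the $u^{\alpha-1}$ factors cancel in the ratio and $f_Y(u)/f_B(u)=C_0\sqrt{(1-u)/(-\log u)}$ with $C_0=\sqrt\alpha\,\Gamma(\alpha)/\Gamma(\alpha+\tfrac12)$, exactly the constant in the statement. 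The elementary inequality $\log u\le u-1$ on $(0,1)$ gives $-\log u\ge 1-u$, hence $f_Y(u)\le C_0\,f_B(u)$ pointwise; integrating over $(0,u)$ yields $\prob{Y\le u}\le C_0\,I(u;\alpha,1/2)$, which together with the convergence of the previous step completes the proof.

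The hard part will be making the second step honest: checking the Lindeberg/Noether condition (which holds only almost surely and only in the limit) and accepting that weak convergence secures the stated inequality only up to the $o(1)$ that ``$n$ sufficiently large'' sweeps away. Everything else --- the exact variance identity, the change of variables to the $\chi^2_1$ law, and the density comparison powered by $\log u\le u-1$ --- is routine and exact.
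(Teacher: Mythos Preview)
Your proposal is correct and follows essentially the same route as the paper: both arguments invoke the central limit theorem to replace $nT(\pi)^2/(2+\kappa+\kappa^{-1})$ by a $\chi^2_1$ variable and then use the elementary inequality $-\log u \ge 1-u$ to dominate the resulting distribution by $C_0$ times the $\distBeta{\alpha}{1/2}$ cdf. The only cosmetic difference is that you compare densities pointwise and integrate, whereas the paper bounds the integrand directly inside the $\chi^2_1$ survival integral; you are also more explicit about which CLT is being invoked (Hoeffding's combinatorial CLT with the Noether condition), a point the paper leaves implicit.
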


Proposition~\ref{prop:betaAdj} allows us to adjust the p-values
from Theorem~\ref{thm:uniReal} so that our test statistic 
achieves the desired empirical size.  The refined bound
is 
$$
\prob{T(\pi)>t} \le
C_0     
I\left(
\ee^{  
	-nt^2/2\lceil\kappa+1\rceil^3
};
\frac{\lceil\kappa+1\rceil^3}{2+\kappa+\kappa^{-1}},\frac{1}{2}
\right)
$$
This adjustment is shown to work in the simulations
detailed in Figure~\ref{fig:uniTest}.
For the more general Banach space setting, we can 
use Talagrand's concentration inequality \citep{TALAGRAND1996CON}
to prove the following theorem.

\begin{theorem}
	\label{thm:betaAdj}
	Let $(\mathcal{X},\norm{\cdot})$ be a Banach
	space with separable dual space $\mathcal{X}^*$, 
	and let
	$h:\real\rightarrow\real$ be monotonically
	increasing. 
	For any random 
	variable $X$ taking values in $\mathcal{X}$ such 
	that $\xv h(\norm{X})^2<\infty$ and 
	$\sup_{X\in\mathcal{X}}h(\norm{X})<U<\infty$ and 
	for $u\in(0,1)$ and 
	some constants 
	$C,c,\alpha,\beta>0$,
	$
	\prob{ \ee^{-h(\norm{X})/c} < u } \le C I(u;\alpha,\beta)
	$
	where $I(u;\alpha,\beta)$ is the incomplete beta function
	for $c$ sufficiently large. 
\end{theorem}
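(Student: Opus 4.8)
The plan is to convert the probabilistic statement into a deterministic comparison of two explicit functions of $u$ and then absorb the remaining gap into the free constants $C,\alpha,\beta$, using ``$c$ sufficiently large'' to supply the slack. First I would rewrite the event: since $t\mapsto\ee^{-t/c}$ is strictly decreasing and $h$ is monotone increasing,
$$
\{\,\ee^{-h(\norm{X})/c}<u\,\}=\{\,h(\norm{X})>-c\ln u\,\},
$$
and the hypothesis $\sup h(\norm{X})<U$ forces this event to be empty whenever $-c\ln u\ge U$, i.e. for $u\le\ee^{-U/c}$. Hence it suffices to prove the bound on the window $u\in(\ee^{-U/c},1)$, which shrinks toward $1$ as $c$ grows --- a first manifestation of why large $c$ helps.

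Next I would invoke Talagrand's concentration inequality. In the settings of interest $X=\sum_{i=1}^n\veps_iX_i$ for fixed $X_i$, so $\norm{X}$ is a convex, Lipschitz function of the Rademacher vector (Lipschitz constant $(\sum_i\norm{X_i}^2)^{1/2}$), hence so is $h(\norm{X})$ in the principal case $h(t)=t^2$, and $h(\norm{X})$ is bounded; separability of $\mathcal{X}^*$ furnishes the measurability needed to pass to the Banach-valued regime, and the second-moment hypothesis keeps the concentration scale finite. Talagrand's inequality then yields a sub-Gaussian tail
$$
\prob{\,h(\norm{X})\ge M+s\,}\le K\,\ee^{-\gamma s^2},\qquad s\ge0,
$$
about a median $M<U$, with $K,\gamma,M$ all independent of $c$. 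For $u\in(\ee^{-U/c},\ee^{-M/c})$ we have $-c\ln u>M$, so substituting $s=-c\ln u-M$ bounds the left-hand side of the theorem by $K\exp(-\gamma(c\ln(1/u)-M)^2)$; for $u\in[\ee^{-M/c},1)$ we retain only the trivial bound $\le 1$.

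Finally I would match these against the regularized incomplete beta function. Fix any $\alpha,\beta>0$. Since $u\mapsto I(u;\alpha,\beta)/u^{\alpha}$ is continuous and strictly positive on $(0,1]$ with finite positive limits at both endpoints ($1/\alpha B(\alpha,\beta)$ as $u\to0^+$ and $1$ as $u\to1^-$), there is $\kappa_0>0$ with $I(u;\alpha,\beta)\ge\kappa_0 u^{\alpha}$ on $(0,1)$. Writing $v=\ln(1/u)>0$, the desired inequality $K\exp(-\gamma(cv-M)^2)\le C\kappa_0\,\ee^{-\alpha v}$ is equivalent to the quadratic inequality $\gamma c^2v^2-(2\gamma cM+\alpha)v+\gamma M^2-\ln(K/(C\kappa_0))\ge0$ in $v$; once $C>K/\kappa_0$ its discriminant equals $\alpha^2+4\gamma cM\alpha+4\gamma c^2\ln(K/(C\kappa_0))$, which tends to $-\infty$ as $c\to\infty$, so the quadratic is nonnegative for all $v>M/c$. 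On the residual window near $u=1$ we have $I(u;\alpha,\beta)\ge I(\ee^{-M/c};\alpha,\beta)\to1$, so $C\,I(u;\alpha,\beta)\ge1$ there for $c$ large and the trivial bound closes the argument; choosing $\alpha,\beta$ to match the dominant polynomial correction in Talagrand's bound (the analogue of $\beta=1/2$ for the $\chi^2_1$ correction in Proposition~\ref{prop:betaAdj}) upgrades the bound from merely existential to sharp.

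The main obstacle is the Talagrand step: extracting a clean, $c$-free sub-Gaussian (or even sub-exponential) tail for $h(\norm{X})$ at the stated level of generality is precisely where the boundedness, second-moment, and separable-dual hypotheses are consumed, and some care is needed when $h$ is merely monotone rather than convex (one then applies Talagrand's inequality to $\norm{X}$ and composes with $h$, at the cost of an $h^{-1}$ inside the exponent that must still be fed through the quadratic matching). By contrast, the endpoint bookkeeping showing that a single constant $C$ works uniformly on $(0,1)$ is routine.
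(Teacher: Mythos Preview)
Your approach is correct but follows a genuinely different route from the paper's. Both begin by rewriting the event as $\{h(\norm{X})>-c\ln u\}$ and invoking a Talagrand-type concentration inequality, but diverge thereafter.

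The paper uses Talagrand's Bernstein-form bound $\prob{Z\ge\xv Z+t}\le\exp\bigl(-t^2/(a+bt)\bigr)$ and then works to convert this analytically into a gamma/Erlang tail: the key algebraic step is $\exp\bigl(-t^2/(a+bt)\bigr)\le\ee^{-t/b}(1+bt/a)^{a/b^2}$, which after a Taylor-expansion argument is dominated by $\int_t^\infty x^{a-1}\ee^{-x/b}\,dx/(b^a\Gamma(a))$. A change of variables $y=\ee^{-x/c}$ together with $-\log y\le(1-y)/y$ then produces the incomplete beta function with \emph{explicit} parameters $\alpha=a$ and $\beta=c/b-a$. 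You instead take a pure sub-Gaussian Talagrand bound (legitimate under the boundedness hypothesis via convex--Lipschitz concentration, though not the empirical-process version the paper cites) and compare it directly against the elementary lower bound $I(u;\alpha,\beta)\ge\kappa_0 u^{\alpha}$, reducing everything to a quadratic-discriminant check in $v=\ln(1/u)$ plus a trivial endpoint argument.

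Your argument is shorter and shows the bound holds for \emph{any} $\alpha,\beta>0$, which is all the theorem as stated demands. The paper's longer route buys explicit identification of $\alpha,\beta$ in terms of the Talagrand constants $a,b$; this is the operational point, since Algorithm~\ref{algo:empBetaAlg} needs to estimate the \emph{right} beta parameters, not merely know that some exist. Your closing remark about matching $\alpha,\beta$ to the dominant polynomial correction gestures at this, but the discriminant method as written is purely existential and does not single out natural values --- so you prove the theorem but slightly undersell what the paper's construction delivers.
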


\begin{remark}
	Theorem~\ref{thm:betaAdj} requires the Banach space
	$\mathcal{X}$ to have a separable dual.  This stems from
	writing the norm as a countable supremum for use within
	Talagrand's concentration inequality \citep{TALAGRAND1996CON}.
	We can directly apply this result to 
	commutative and non-commutative $L^q$ spaces for 
	$1<q<\infty$.  However, $L^\infty$ is a standard example of
	a non-separable Banach space.  For our purposes, we can avoid 
	this issue as it is typical in functional data analysis 
	to consider the uniform norm on the space of continuous 
	bounded functions with compact support.
\end{remark}

When working in commutative and non-commutative $L^q$
spaces, we no longer have easily defined constants for the 
righthand bound in Theorem~\ref{thm:betaAdj}.
Hence, we instead propose an \textit{empirical beta transform}, 
outlined in Algorithm~\ref{algo:empBetaAlg}, which estimates
these constants.
To do this, we must choose a small number $r$ of permutations to 
draw uniformly at random from $\mathbb{S}_n$.
In practice, we
	find that $r = 10$ or 20 is sufficient to achieve good results 
	on real 
	data.
From these, we compute test statistics sampled from the null
setting, which will yield a collection of $r$ p-values.
These p-values can in turn be used to estimate the parameters
for a beta distribution via the method of moments estimate
$\hat{\alpha}$ and $\hat{\beta}$.  Lastly, the p-value $p_0$ 
produced by $T_0$ can be adjusted by application of the 
incomplete beta function: $I(p_0;\hat{\alpha},\hat{\beta})$.
This method was applied to most of the data examples detailed in
Section~\ref{sec:otherData}.  
This transform is also shown to work well for the construction
of wild bootstrap confidence regions for least squares and 
ridge regression \citep{KASHLAK_BURAK_WILDBS}.

\begin{algorithm}[t]
	\caption{
		\label{algo:empBetaAlg}
		The Empirical Beta Transform
	}
	\begin{tabbing}
		\qquad \enspace Compute p-value $p_0$ from test
		statistic $T_0$ using Theorem~\ref{thm:uniComm}
		or~\ref{thm:uniNonComm}.\\
		\qquad \enspace Choose $r>1$, the number of permutations 
		to simulate---e.g. $r=10$.\\
		\qquad \enspace Draw $\pi_1,\ldots,\pi_r$ from $\mathbb{S}_n$
		uniformly at random.\\
		\qquad \enspace  Compute p-values 
		$p_1,\ldots,p_r$ from test statistics 
		$T_{\pi_1},\ldots,T_{\pi_r}$.\\ 
		\qquad \enspace Find the method of moments estimator
		for $\alpha$ and $\beta$.  \\
		\qquad\qquad Estimate first and second central moments of the 
		$p_i$ by $\bar{p}$ and $s^2$.\\ 
		\qquad\qquad Estimate 
		$\hat{\alpha} = {\bar{p}^2(1-\bar{p})}/{s^2}-\bar{p}$.\\
		\qquad\qquad Estimate
		$\hat{\beta} = [ \bar{p}(1-\bar{p})/s^2 - 1 ][ 1-\bar{p} ]$.\\
		\qquad \enspace Return the adjusted p-value
		$I(p_0; \hat{\alpha},\hat{\beta})$.
		
	\end{tabbing}
\end{algorithm}

\section{k sample testing}
\label{sec:kSamp}

For general one-way ANOVA and more complex experimental
designs, we extend the above two sample tests to $k$ level 
factors.  The two challenges to overcome are (1) proper 
multiple testing correction for the $k\choose2$ pairwise
comparisons and (2) the construction of a global p-value.
Classical hypothesis testing would have us first reject the 
global hypothesis and follow up with pairwise post-hoc testing.
For permutation tests, we begin with pairwise testing and 
combine these tests into a global p-value.

For one-way ANOVA,
let $X_{i,j}$ be the $j$th observation from category $i$ 
for $i=1,\ldots,k$ and $j=1,\ldots,n_i$ under the model
\begin{equation}
\label{eqn:onewayANOVA}
X_{i,j} = \mu + \tau_{i} + \veps_{i,j}
\end{equation}
with global mean $\mu$, $i$th treatment effect $\tau_i$ 
with $\sum_{i=1}^k\tau_i=0$, and exchangeable errors 
$\veps_{i,j}$---i.e. permutationally invariant 
\citep{KALLENBERG2006}.
We wish to test the following:
\begin{align*}
&\text{Pairwise}&&H_0^{(ij)}: \tau_i=\tau_j & &H_1^{(i,j)}: \tau_i\ne\tau_j\\
&\text{Global}&&H_0: \tau_1=\ldots=\tau_k=0 & 
&H_1: \exists \tau_i\ne0.
\end{align*}
Under the pairwise null $H_0^{i,j}$, the difference in category means
is $\bar{X}_{i\cdot}-\bar{X}_{j\cdot}=
\bar{\veps}_{i\cdot}-\bar{\veps}_{j\cdot}$.  Thus, the permutation
test requires exchangeable errors---i.e. the distribution 
of 
$\bar{\veps}_{i\cdot}-\bar{\veps}_{j\cdot}$
is invariant under any random permutation.  This is weaker
than the standard iid setup and, most critically, does
not require normality.

\subsection{Multiple Pairwise Tests}

From Section~\ref{sec:twoSamp}, we can compute test statistics 
$T_0^{(ij)}$ for $H_0^{(ij)}$ and consider the permutation
distribution of $T^{(ij)}(\pi)$ for some uniformly
distributed $\pi\in\mathbb{S}_{n_i+n_j}$.  
For familywise type I error control, 
the pairwise statistics come from independent applications
of dependent Rademacher vectors.
Hence, we can rely on standard
multiple testing corrections such as the simple Bonferroni 
correction as proposed in \cite[Chapter 5]{BASSO2009} or the slightly
more involved step-down procedure used in \cite{CABASSI2017}.
Other methods include Holm's stepdown method \citep{HOLM1979}, 
the approach outlined in the 
canonical text \cite{WESTFALL_YOUNG}, and the more 
recent \cite{ROMANO2005}.
In experimental design, some authors even prefer to forego such 
corrections and report raw uncorrected p-values
\citep{WUHAMADA}.  The focus of this article is on 
computation of the raw p-values and hence, application 
of one's favourite multiple testing correction is left
to the reader.
For the phonological data analysis in Section~\ref{sec:vowelData},
we will just consider the raw p-values and the Bonferroni correction. 

\subsection{Global Test}

The k-sample global significance test statistic can be written
as a combination of the pairwise statistics:
\begin{equation}
\label{eqn:globalTest}
T_0 = \sum_{i=1}^{k-1}\sum_{j=i+1}^k n_in_j (T_0^{(ij)})^2
\end{equation}
To test the significance of $T_0$, a permutation framework
can be implemented in one of three ways; see
\cite{BASSO2009} Chapter~5 for more details.  The first is the 
pooled method
in which the entire data set of $N = n_1+\ldots+n_k$ 
observations is permuted.  The second is by aggregation
of the pairwise statistics where each permutation is applied
independently to each pair of samples. 
The third is the 
synchronized method which only applies to balanced
designs---i.e. $n_1=\ldots=n_k$---in which the 
same permutations are applied to each category pairing $(i,j)$.
This is the preferable approach when the design is balanced
\citep{BASSO2009}.
As we have already discussed individual pairwise testing, we
focus on the synchronized test in the context of our 
Kahane--Khintchine methodology.

\begin{remark}
	Beyond univariate data, the above test statistic $T_0$ can 
	be considered on the direct sum of $\kappa = {k\choose2}$ 
	Banach spaces.
	That is, for a sequence of Banach spaces 
	$(\mathcal{X}_i,\norm{\cdot}_i)$ and elements
	$X_i\in\mathcal{X}_i$, we can define a new
	Banach space by the
	$\ell^2$ direct sum 
	$
	(X_i)_{i=1}^n\in
	\left( \bigoplus_{i=1}^{\kappa} \mathcal{X}_i \right)_{\ell^2}
	$
	with norm 
	$\norm{ (X_i)_{i=1}^n } = (\sum_{i=1}^n \norm{X_i}_i^2)^{1/2}$.
	See any text on discussing sequences in Banach spaces
	such as \cite{DIESTEL1995} for more details.
\end{remark}

The synchronized setting is the preferred approach for balanced designs;
see, for example, \cite{BASSO2009,CABASSI2017}.
This approach applies the same permutations to each pairing.
Let $X^{(1)},\ldots,X^{(k)}$ be $m$-long column vectors containing 
the observations of samples $1,\ldots,k$, respectively.  Then, let
$X$ be the $2m \times {k\choose2}$ matrix with columns 
$$
X = \begin{pmatrix}
X^{(1)} & X^{(1)} & \ldots & X^{(k-1)} \\
X^{(2)} & X^{(3)} & \ldots & X^{(k)}
\end{pmatrix}.
$$
Lastly, let $\TT{\veps} = (\veps_1,\ldots,\veps_{2m})$ such that 
$\sum \veps_i=0$.  The synchronized permuted version of the 
global test statistic in Equation~\ref{eqn:globalTest} is then
$
\norm{\TT{X}\veps}_{\ell^2}^2 = \sum_{i,j=1}^{2m} a_{i,j}\veps_i\veps_j
$
for $a_{i,j}$, the $i,j$th entry in $X\TT{X}$.  This is a second
order Rademacher chaos \citep[Section 4.4]{LEDOUXTALAGRAND1991} 
except that the $\veps_i$ are not iid.  In this case, we still
have a sub-Gaussian bound achievable via a decoupling
argument \citep{KWAPIEN1987} with proof in the supplementary material.
See \cite{DELAPENAGINE2012} for more on decoupling inequalities.

\begin{theorem}
	\label{thm:syncTest} 
	Let $T = \norm{\TT{X}\veps}_{\ell^2}$ for $X$ the above 
	$2m\times{k\choose2}$ matrix and $\veps_i$ such that
	$\sum_{i=1}^{2m}\veps_i=0$. Then, for some universal constant $c$,
	$
	\prob{T >t} \le \exp\left[  
	-t^2/c\mathcal{S}
	\right]
	$
	where 
	$\mathcal{S} = \norm{X\TT{X}}_{S^2}$.
\end{theorem}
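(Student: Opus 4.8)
The plan is to reduce the tail bound for the quadratic form $T^2 = \norm{\TT{X}\veps}_{\ell^2}^2 = \sum_{i,j} a_{ij}\veps_i\veps_j$ (with $a_{ij}$ the entries of $X\TT{X}$) to a linear Kahane--Khintchine-type bound of the kind already established in Theorem~\ref{thm:uniNonComm}, via a decoupling step. First I would observe that, because $\sum_i \veps_i = 0$, the ``diagonal'' contribution $\sum_i a_{ii}\veps_i^2 = \tr(X\TT{X})$ is a deterministic constant, so it suffices to control the off-diagonal chaos $\sum_{i\ne j} a_{ij}\veps_i\veps_j$. For the genuinely dependent Rademacher vector $\veps$ constrained to the hyperplane $\sum\veps_i=0$, I would invoke the decoupling inequality (\cite{KWAPIEN1987}; see also \cite{DELAPENAGINE2012}), in the weakly-dependent form developed in the supplementary material, to pass to $\sum_{i\ne j} a_{ij}\veps_i\veps'_j$ where $\veps'$ is an independent copy. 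This costs only a universal multiplicative constant, which gets absorbed into $c$.

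Next I would condition on $\veps'$. Conditionally, $\sum_{i,j} a_{ij}\veps_i\veps'_j = \sum_i \veps_i \bigl(\sum_j a_{ij}\veps'_j\bigr) = \langle \veps, (X\TT{X})\veps'\rangle$ is a \emph{linear} form in the constrained Rademacher vector $\veps$, with coefficient vector $b = (X\TT{X})\veps'$. Applying the scalar constrained-Rademacher sub-Gaussian bound (the mechanism behind Theorem~\ref{thm:uniReal}, i.e. Hoeffding on the hyperplane) gives, conditionally on $\veps'$,
\[
\prob{\,\abs{\langle\veps, b\rangle} > s \,\big|\, \veps'} \le 2\exp\!\bigl(-s^2 / c_1 \norm{b}_{\ell^2}^2\bigr).
\]
Here $\norm{b}_{\ell^2}^2 = \norm{(X\TT{X})\veps'}_{\ell^2}^2 = \langle \veps', (X\TT{X})^2 \veps'\rangle \le \norm{X\TT{X}}_{S^\infty}\,\langle \veps',(X\TT{X})\veps'\rangle$, and a second application of the scalar bound (or simply $\langle\veps',(X\TT{X})\veps'\rangle \le 2m\,\norm{X\TT{X}}_{S^\infty}$, or a subexponential bound on this diagonal-dominated form) controls the randomness in $\veps'$ with the Schatten norm $\norm{X\TT{X}}_{S^2}$ appearing through $\tr(X\TT{X})^2 = \norm{X\TT{X}}_{S^2}^2$. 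Combining the two levels via a union bound over a truncation threshold for $\norm{b}$ yields $\prob{T^2 > t^2} \le \exp(-t^2/c\,\mathcal{S})$ with $\mathcal{S} = \norm{X\TT{X}}_{S^2}$; the square (rather than $\mathcal{S}^2$) is natural since a quadratic form in sub-Gaussian variables is subexponential at the scale of its Hilbert--Schmidt norm, matching the Hanson--Wright phenomenology.

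The main obstacle I anticipate is the decoupling step for the \emph{dependent} Rademacher chaos: the classical decoupling inequalities of \cite{KWAPIEN1987} are stated for independent symmetric variables, and here $\veps$ lives on the sphere $\{\sum\veps_i=0\}$, so I would need the modified decoupling inequality promised in the supplementary material and to check that the diagonal terms it produces are exactly the deterministic $\tr(X\TT{X})$ so they cause no trouble. A secondary technical point is bookkeeping the universal constant $c$ through two nested applications of the constrained sub-Gaussian bound and the decoupling constant; since the statement only claims existence of some universal $c$, I would not optimize this, but I would flag (as in the Remark after Theorem~\ref{thm:uniComm}) that the empirical beta transform of Section~\ref{sec:betaAdj} is what makes the bound usable in practice.
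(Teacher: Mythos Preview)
Your instinct that decoupling is the heart of the matter is correct, and you correctly flag the dependency of $\veps$ as the main obstacle. However, the resolution you propose --- a ``weakly-dependent decoupling inequality'' applied directly to the constrained $\veps$ --- is not what the paper does, and no such result is developed in the supplementary material. The paper instead circumvents the dependency by a \emph{symmetrization} step: writing $T^2(\pi)$ as $\sum_{i,j=1}^m b_{i,j,\pi}$ with $b_{k,k',\pi}=a_{\pi(k),\pi(k')}-a_{\pi(k+m),\pi(k')}-a_{\pi(k),\pi(k'+m)}+a_{\pi(k+m),\pi(k'+m)}$, and observing that each $b_{k,k',\pi}$ is symmetric under the sign flips that swap $\pi(k)\leftrightarrow\pi(k+m)$. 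This lets one insert \emph{iid} Rademacher variables $\delta_1,\dots,\delta_m$ to obtain $\xv_\pi|T^2(\pi)|^p=\xv_\pi\xv_\delta\bigl|\sum_{i,j}\delta_i\delta_j b_{i,j,\pi}\bigr|^p$, after which the standard Kwapie\'n decoupling (Corollary~3 of \cite{KWAPIEN1987}) applies to the independent $\delta$'s, yielding $p$th-moment bounds in terms of the second moment $\xv_\pi\sum_{i,j}b_{i,j,\pi}^2\le 4\norm{X\TT{X}}_{S^2}^2$.

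Your iterated-conditioning route is also looser than the paper's moment approach. Bounding $\norm{(X\TT{X})\veps'}_{\ell^2}^2$ by $\norm{X\TT{X}}_{S^\infty}\langle\veps',(X\TT{X})\veps'\rangle$ and then crudely by $2m\norm{X\TT{X}}_{S^\infty}^2$ does not produce $\norm{X\TT{X}}_{S^2}$; and attempting instead to control $\langle\veps',(X\TT{X})\veps'\rangle$ by a tail bound reintroduces exactly the quadratic chaos you are trying to bound, so the argument becomes circular. The paper avoids this by staying in moments throughout --- the $S^2$ norm emerges naturally from the second-moment computation --- and only passing to a tail bound at the very end via an MGF argument (with some care, since the moment bound is $B_p^{2p}$ rather than $B_p^p$, requiring a gamma-function estimate). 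In short: the missing idea is the symmetrization that manufactures iid signs out of the permutation structure, which is what makes classical decoupling available.
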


\begin{remark}
	Up to constant $c$, this theorem coincides with the result for 
	a two sample test as for 
	$X = (x_1^{(1)},\ldots,x_m^{(1)},x_1^{(2)},\ldots,x_m^{(2)})^\text{T}$,
	the term $\mathcal{S}$ equals the sample variance of the $x_{i}^{(j)}$.
	However, the constant $c$ emerging from the proof is very large.
	This universal constant problem is rectified via the empirical
	beta transform presented in Section~\ref{sec:betaAdj}.
\end{remark}

\section{Data Examples}
\label{sec:otherData}

\subsection{Univariate Data}
\label{sec:otherDataUni}

\subsubsection{Two Sample Test}
The performance of Theorem~\ref{thm:uniReal} on simulated data
is displayed in Figure~\ref{fig:uniTest} for balanced and for
imbalanced samples averaged over 1000 replications.  
In the balanced case, we simulate
$m_1=m_2=100$ Gaussian random variates with distributions
$\distNormal{0}{1}$ and $\distNormal{\mu}{1}$ for $\mu\in[0,1]$.
We compare the classic student's t-test to the permutation test
with 1000 permutations, the bounds from Theorem~\ref{thm:uniReal}
with $\kappa=1$, and the beta adjusted bound from 
Proposition~\ref{prop:betaAdj}. 
Notably, the balanced Khintchine bound returns 
p-values just slightly larger than the standard t-test while the 
beta adjusted bound is even tighter.
For the imbalanced case, the sample sizes are now $m_1=140,m_2=60$
and $\kappa=2.33$.
The imbalanced bound is not as sharp, but the beta adjusted 
bound still gives a close approximation to the t-test p-value.

\begin{figure}
	\begin{center}
		\includegraphics[width=0.4\textwidth]{\PICDIR/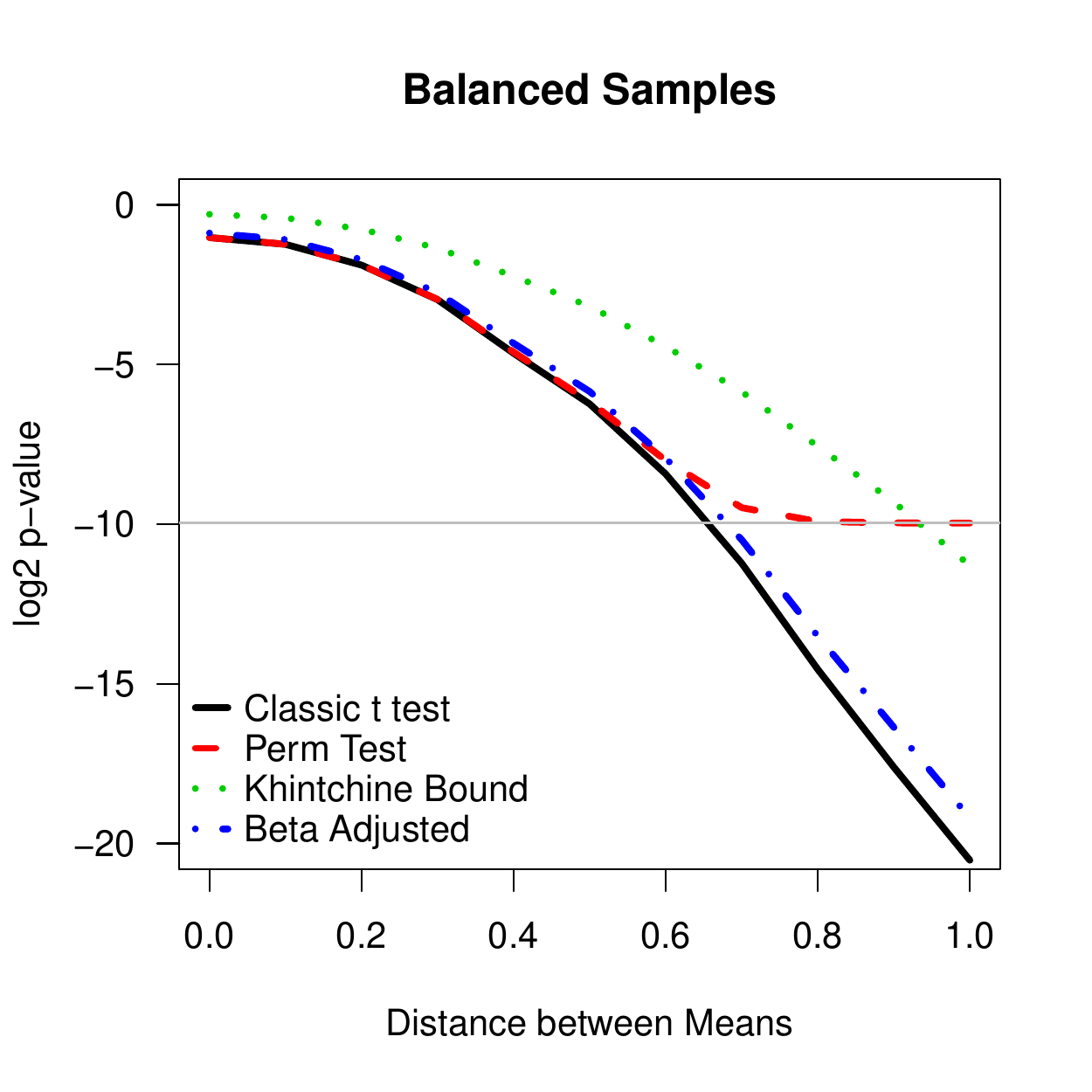}
		\includegraphics[width=0.4\textwidth]{\PICDIR/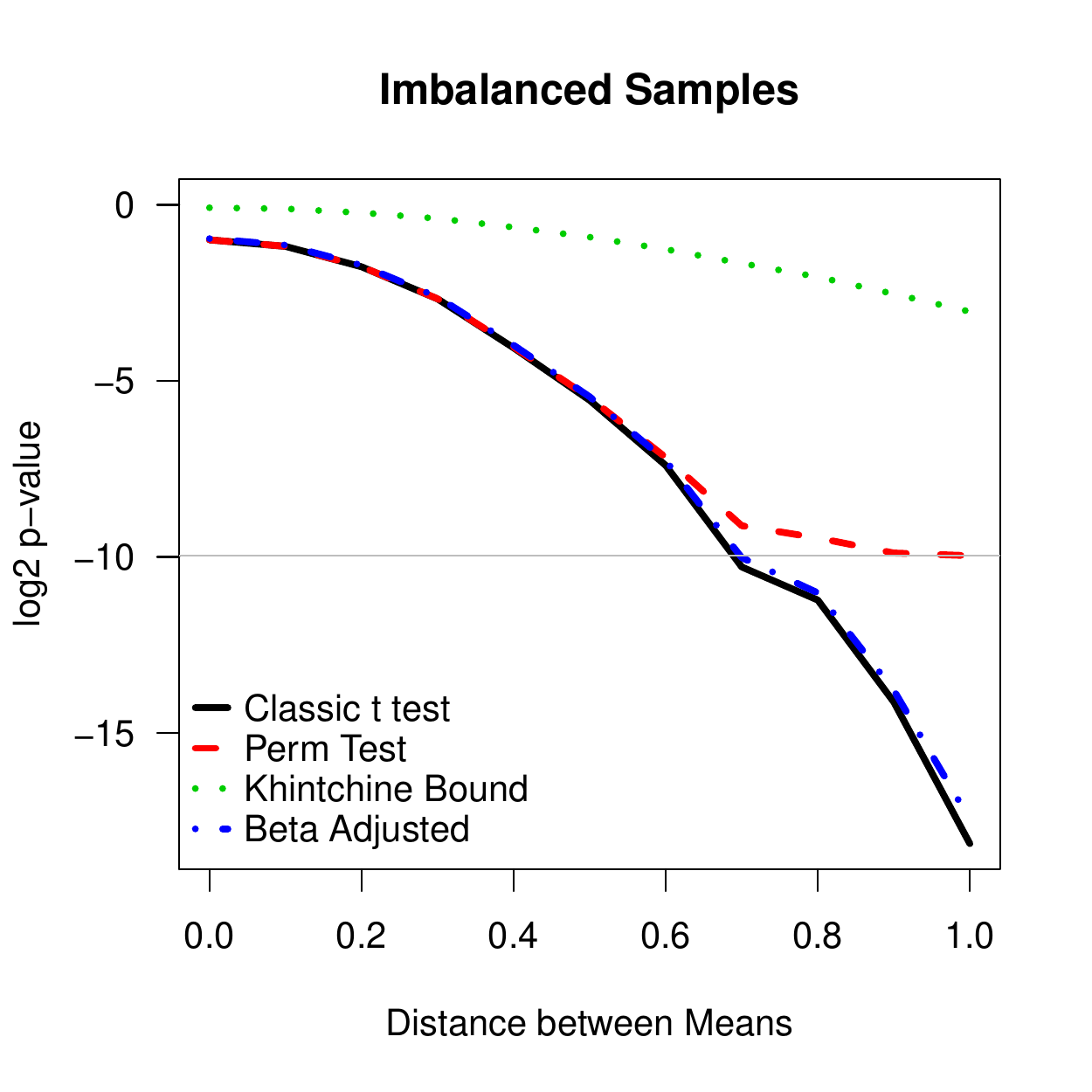}
	\end{center}
    \vspace{-0.25in}
	\caption{
		\label{fig:uniTest}
		Univariate two sample test for normal data with 
		balanced sample sizes $m_1=m_2=100$ (left)
		and for imbalanced $m_1=140,m_2=60$ (right)
		comparing the standard t-test (black)
		to the permutation test (red) with 1000 permutations and
		to the Khintchine bound, Theorem~\ref{thm:uniReal}, (green) 
		and the beta adjusted bound, Proposition~\ref{prop:betaAdj}, (blue) 
		all across 1000 replications.  
	}
\end{figure}

\subsubsection{K Sample Test}

The performance of Theorem~\ref{thm:syncTest}
for comparing $k$ samples of size $n$ via a synchronized permutation
test is demonstrated in Figure~\ref{fig:syncTest}.  For this 
simulation, $k=4,16$ for the left and right plot, respectively,
samples of size $n=20$ were generated as random Gaussian variates
with variance 1 and with mean 0 for the first $k-1$ sets and 
with mean $\mu\in[0,2]$ for the $k$th set.  As $\mu$ grows,
the p-value for the standard F-test, the synchronized permutation 
test, and the beta-adjusted p-value from Theorem~\ref{thm:syncTest}
all decrease in tandem for $k=4$ with the unadjusted bound above the 
others.  In the $k=16$ case, the beta adjusted
bound and the synchronized permutation test return the same p-values
until the lines approach the permutation boundary at $-\log_2(1001)$.
More notably, they slightly differ from the classic F-test as 
for relatively large $k$ and small $n$ the synchronized permutation
test returns marginally different p-values than the F-test.  
A total of 1000 random permutations were generated 
for the synchronized permutation test, and this simulation 
was replicated 1000 times to create these plots.

\begin{figure}
	\begin{center}
		\includegraphics[width=0.4\textwidth]{\PICDIR/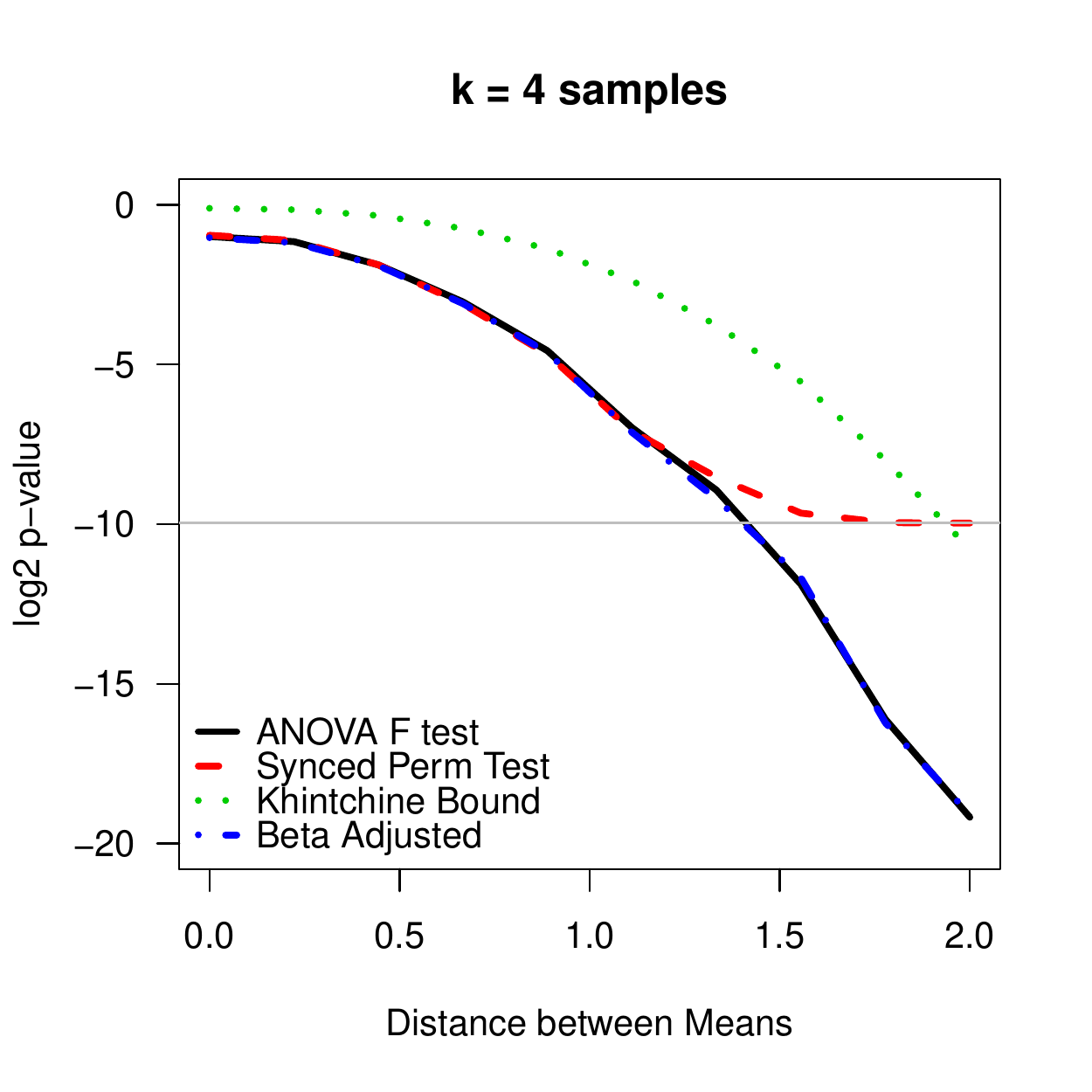}
		\includegraphics[width=0.4\textwidth]{\PICDIR/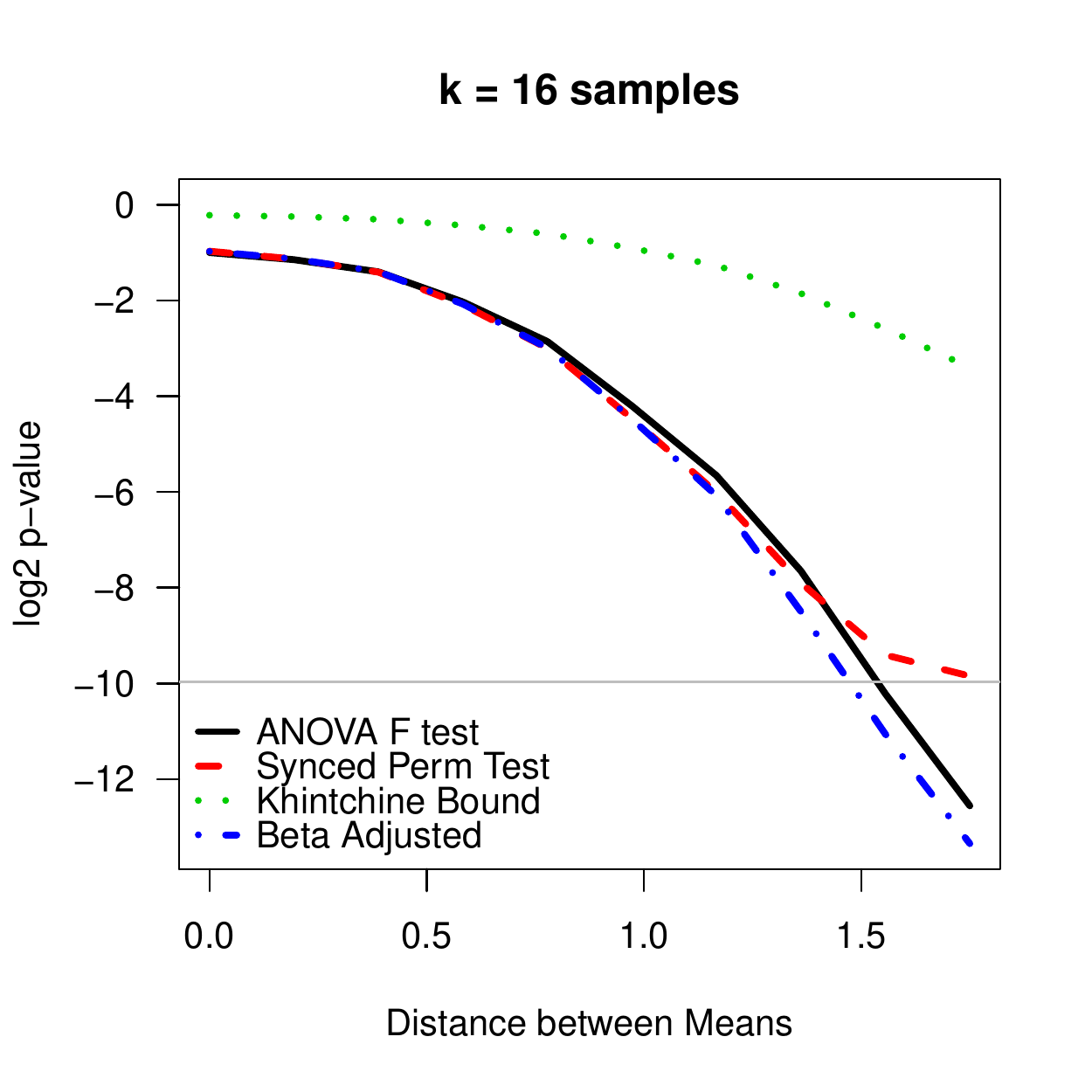}
	\end{center}
    \vspace{-0.25in}
	\caption{
		\label{fig:syncTest}
		Univariate $k$ sample test for normal data with 
		$k=4$ (left) and $k=16$ (right)
		balanced samples of size $n=20$.
		The figure compares the standard F-test (black)
		to the synchronized permutation test (red) with 1000 
		permutations and
		to the unadjusted (green) and beta adjusted (blue) 
		bounds from Theorem~\ref{thm:syncTest}.  
	}
\end{figure}

\subsection{Berkeley Growth Curves: Functional Means}
\label{eg:berkeleyCurves}
To demonstrate Theorem~\ref{thm:uniComm}, we apply it to the classic 
Berkeley growth curve dataset \citep{RAMSAYSILVERMAN2005}.\footnote{
	This data is available in the R package \texttt{fda} \citep{FDAPACK}.  
}  
This dataset
contains measurements of 93 children---39 males and 54 
females---taken at 31 time points between the ages of 1 and 18 years. 
A set of 30 curves was randomly selected from the male curves and 
30 curves from the female curves to test for a difference in the 
population mean curves based on those observations.  This was 
repeated 100 times to see the resulting p-values under the 
$L^1$, $L^2$, and $L^\infty$ norms. Table~\ref{tab:functMean} 
displays the percentage of rejections.  Applying 
Theorem~\ref{thm:uniComm} results in a reasonable number 
of rejections under the $L^1$ topology.  However, 
differences are not detectable in $L^2$ or $L^\infty$.
This is rectified via the empirical beta adjustment.

\begin{table}
	\caption{
		\label{tab:functMean}
		Displayed above are the percentages of rejections 
		by using Theorem~\ref{thm:uniComm} (left) and the 
		beta adjusted bound (right) at test sizes 
		$5\%,1\%,0.1\%$ 
		for the 
		$L^1$, $L^2$, and $L^\infty$ norms.  
	}
	\begin{center}
		\begin{tabular}{lrrrrrr}
			\hline
			&\multicolumn{6}{c}{Percentage of Rejections}\\
			&\multicolumn{3}{c}{Kahane Bound}&\multicolumn{3}{c}{Beta Adjusted}\\
			Size       & 5\% & 1\%  & 0.1\% &  5\% & 1\%  & 0.1\% \\\hline
			$L^1$       & 86\% & 42\%  &  7\% & 85\% & 55\% & 31\% \\
			$L^2$       & 0\%  & 0\%   &  0\% & 100\%& 88\% & 77\% \\
			$L^\infty$  & 0\%  & 0\%   &  0\% & 100\%& 100\%& 98\%\\
			\hline
		\end{tabular}
	\end{center}
\end{table}

\subsection{Phoneme Curves: Covariance Operators}
\label{eg:phonemeOperators}
We apply Theorem~\ref{thm:uniNonComm} to the 
classic phoneme dataset \citep{FERRATYVIEU2006},
which consists of 400 log-periodograms for 5 different
phonemes---the vowel from `dark' \texttt{aa}, 
the vowel from `water' \texttt{ao}, 
the plosive d-sound \texttt{dcl}, 
the fricative sh-sound \texttt{sh}, 
the vowel from she \texttt{iy}---sampled at 150 
frequencies.\footnote{
	This data is available in the R package \texttt{fds} \citep{FDSPACK}.  
}
Using the notation of the International Phonetic Alphabet (IPA),
\texttt{aa} is \textipa{A},
\texttt{ao} is \textipa{O},
\texttt{dcl} is \textipa{d},
\texttt{sh} is \textipa{S}, and
\texttt{iy} is \textipa{i}.
To produce covariance operators for testing, we first 
randomly permute the order of the 400 curves, then
group these curves into sets of 10 to produce a 
set of 40 covariance operators for each of the 
five phoneme classes. This is replicated 100 times
with different random groupings of curves.

We apply our method after using the empirical 
beta adjustment from Section~\ref{sec:betaAdj} 
to each of the 10 pairwise comparisons
between phonemes resulting in Table~\ref{tab:phmData}.
In the trace norm topology, all $100\times10$ pairwise
tests result in rejection for a test size of 1\%.  The 
Hilbert-Schmidt norm only detects a significant difference
between \textipa{A} and \textipa{O} about 52\% of the time
whereas the operator norm fails to detect any significance
between those two phonemes.  The difference between phonemes
\textipa{A} and \textipa{O} is hardest to identify among the
10 pairings.  

\begin{table}
	\caption{
		\label{tab:phmData}
		The percentage of rejected two sample tests at the 
		1\% level comparing two
		different phonemes with a sample size of 
		$m_1=m_2=10$ under the trace, Hilbert-Schmidt, 
		and operator norms.
	}
	\begin{center}
			\begin{tabular}{crrrrcrrrrcrrrr}
				\hline
				& \multicolumn{4}{c}{Trace Norm} &&\multicolumn{4}{c}{Hilbert-Schmidt Norm} &&
				\multicolumn{4}{c}{Operator Norm} \\
				& \textipa{A} &  \textipa{O} & \textipa{d} & \textipa{S} &\phantom{X}
				& \textipa{A} &  \textipa{O} & \textipa{d} & \textipa{S} &\phantom{X}
				& \textipa{A} &  \textipa{O} & \textipa{d} & \textipa{S} \\\hline  
				\textipa{O} &100&   &   &   && 52&   &   &   &&  0&   &   &    \\
				\textipa{d} &100&100&   &   && 93& 86&   &   && 15& 23&   &    \\
				\textipa{S} &100&100&100&   && 99&100&100&   && 88& 97& 91&    \\
				\textipa{i} &100&100&100&100&&100&100&100&100&&100&100&100&100\\
				\hline
		\end{tabular}
	\end{center}
\end{table}

\section{Phonological differences between vowels}
\label{sec:vowelData}

Taking inspiration from the classic phoneme dataset
\citep{HASTIE1995PDA,FERRATYVIEU2006} discussed previously
in Section~\ref{eg:phonemeOperators}, we consider a new data set of
log-periodograms for the phonemes of 12 spoken vowels 
detailed in Table~\ref{tab:vowels}.\footnote{Note that 
	this data
	was collected outside of a proper laboratory setting
	to be a proof-of-concept 
	for the proposed methodology as opposed to an in depth 
	study of language.}
This data is available at 
\url{https://sites.ualberta.ca/~kashlak/kashData.html}.

The raw data consists of 12 phonemes recorded 12 times 
each from 4 different speakers.  The data was recorded 
on a Tascam DR-05 portable linear PCM audio recorder
as a mono 24-bit wave file sampled at 96 kHz, which is currently
considered \textit{high definition audio} in contrast to the
standard 16-bit 44.1 kHz audio on compact discs.
The primary vowel phoneme 
was extracted as a 170 millisecond
clip corresponding to $16384=2^{14}$ samples.  These clips
were transformed into log periodograms via the \texttt{tuneR}
package~\citep{TUNER} as displayed in Figure~\ref{fig:allVowels} for
a single speaker.
As is common with functional data, the raw log-periodograms 
were first smoothed.  In this case, cubic smoothing splines 
were used.  However,
many other smoothing methods can be and have been 
applied to functional data.

\begin{figure}
	\begin{center}
		\includegraphics[width=\textwidth]{\PICDIR/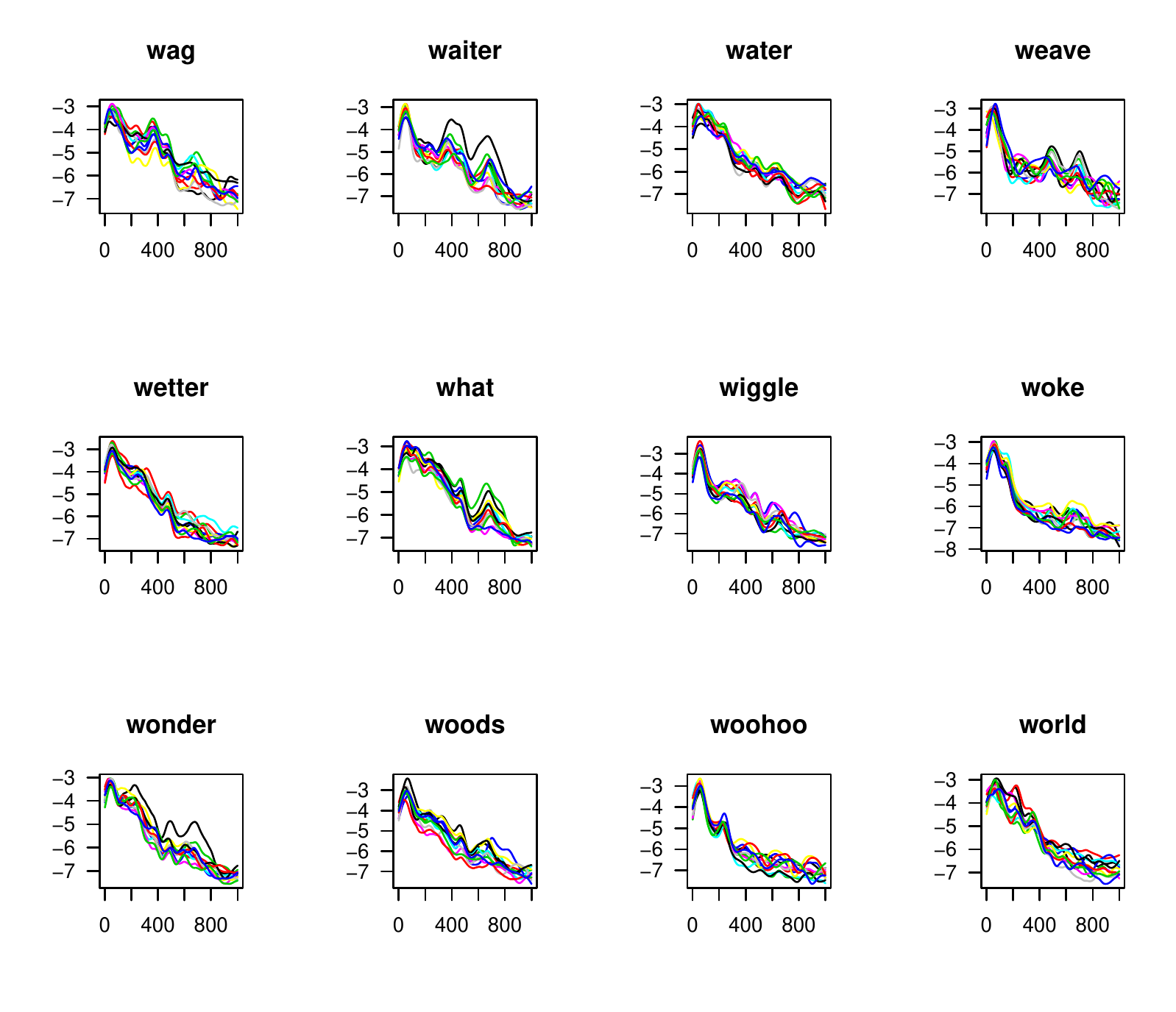}
	\end{center}
	\caption{
		\label{fig:allVowels}
		The log-periodograms of all 12 vowel phonemes spoken 12 times by 
		one of the speakers considered over the first 1000 frequencies.
	}
\end{figure}

Two experimental designs were employed in the collection of 
this data and will be tested in the following subsections.  
First, the 12 words were vocalized 12 times in a Latin
square design.  Each row corresponds to a replication of 
speaking all of the 12 words, and each column corresponds to 
the order of the
words within a replication.  This was done to test for changes in
speech during the recording period.
Secondly, this Latin square design was replicated for 4 different
speakers with two binary blocking factors male/female and 
Canadian/Chinese.  Thus, we have a 
$12\times4$ complete randomized block design with functional responses.
The total sample size is $576=12\times12\times4$ log-periodogram curves.

\begin{table}
	\caption{
		\label{tab:vowels}
		The twelve vowel phonemes considered in our dataset 
		along with the 12 spoken words used to produce those vowels.
	}
	\begin{center}
			\begin{tabular}{|cl|cl|cl|cl|}
				\hline
				\textipa{i}& weave  & \textipa{e}& waiter & 
				\textipa{E}& wetter & \textipa{\ae}& wag  \\
				\textipa{I}& wiggle & \textipa{9}& what   & 
				\textipa{u}& woohoo & \textipa{U}& woods\\
				\textipa{3}& world  & \textipa{o}& woke   & 
				\textipa{2}& wonder & \textipa{6}& water\\
				\hline
			\end{tabular}
	\end{center}
\end{table}

\begin{figure}
	\begin{center}
		\includegraphics[width=0.4\textwidth]{\PICDIR/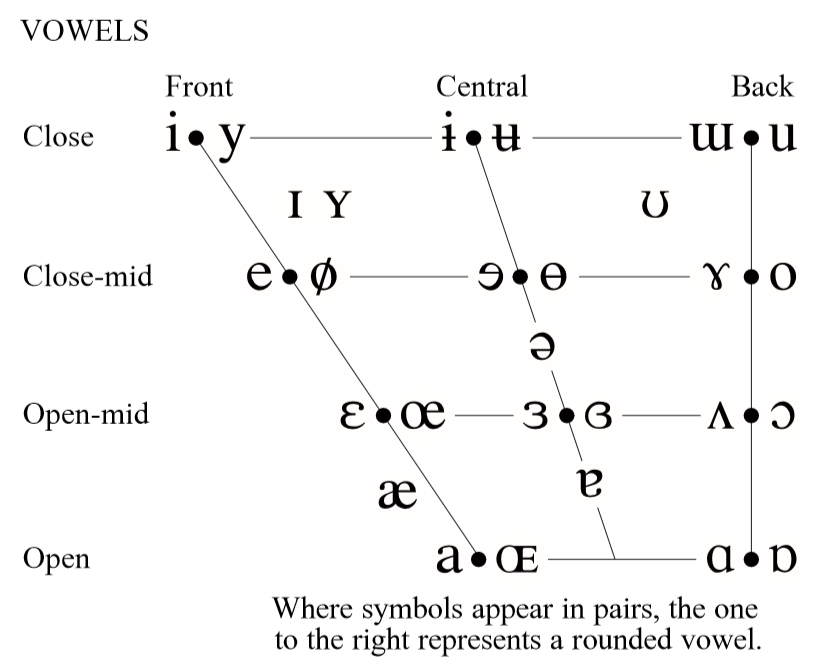}
	\end{center}
	\caption{
		\label{fig:ipaVowel}
		IPA Vowel Chart, \url{http://www.internationalphoneticassociation.org/content/ipa-chart}, available under a Creative Commons Attribution-Sharealike 3.0 Unported License. Copyright 2015 International Phonetic Association.
	}
\end{figure}

\subsection{Latin square design for functional means}

For an unreplicated Latin square design, we cannot perform 
a permutation test for the significance of each factor 
simultaneously.
Exchangability under the null hypothesis for one factor requires
fixing the levels of all other factors when permuting labels.
However, if we fix the Latin square row and column
indices then only a single observation remains leaving nothing to 
permute.  To rectify
this, a stepdown approach as in \cite{BASSO2009} chapter~7 
for unreplicated factorial designs can be applied.
As a permutation test requires exchangeable observations under
the null hypothesis, test statistics for each factor are first
computed.  Beginning with the largest, if that null hypothesis holds,
then this implies that all other null hypotheses hold and hence acts
as the global null allowing for the data to be permuted.
If this null is rejected, then we proceed to test the second largest
test statistic while fixing the levels of the first factor.  Once a 
null is not rejected, this method stops.  Otherwise, all factors can 
be tested except for the last one as rejecting all other null hypotheses
would leave no room for further permutations.

For the vowel data, we have 12-level row, column, and vowel factors 
giving the model
$
y_{ijk}(t) = \mu(t) + 
\text{row}_i(t) + \text{column}_j(t) + \text{vowel}_k(t) + \xi_{ijk}
$
where $y_{ijk}(t)$ is a smoothed log-periodogram, 
$\mu$ is the global mean, and the $\xi_{ijk}$ are mean zero
exchangeable errors.  For all four subjects, the vowel factor
produced a much larger test statistic than the row and
column effects as expected indicating consistency of the 
speaker during the experiment.  Thus, after rejecting
the null of there being no difference among the spoken vowels,
the row or column factor can be considered.  For all four subjects, 
the row and column effects were not deemed to be statistically 
significant---i.e. there were no detectable changes in pronunciation
across the recording session.  
Pairwise comparison of the vowels for each subject
was also performed.  However, of the 66 pairwise hypotheses to 
test, one subject rejected 25 nulls, another rejected only 5 nulls,
and the last two rejected 0 nulls after taking multiple testing
into account.  This is in contrast to the randomized block 
design discussed in the next section that, 
making use of the entire dataset and covariance operators, 
identifies 60 of the 66 pairings
as significantly different.

Before computing the test statistics and p-values in the 
randomized block design discussed next, each log-periodogram was 
centred by subtracting off the row and column effects from the
Latin square design.  This 
resulted in an improvement in the reported p-values, 
which were larger in the case that
the row and column effects were not removed.

\subsection{Complete Randomized block design for functional data}

A complete randomized block design (CRBD) aims to test a treatment effect 
as in one-way ANOVA 
but with the addition of blocking factors to account for sources of 
variation unrelated to the treatment of interest.
For functional data, a CRBD can be performed 
by using the synchronized permutation test for two-way ANOVA from 
chapter 6 of \cite{BASSO2009} combined with the Kahane-Khintchine 
based tail bound.  To achieve this, a difference between the 
functional means or covariances is computed for each of the
$12\choose2$ vowel pairings while holding the levels of the blocking
factors constant.  For each pairing, the test statistics can be 
summed over the levels of the blocking factors thus removing any 
influence from interaction terms even though they are generally assumed
to be negligible in this setting.  Theorems~\ref{thm:uniComm} 
and~\ref{thm:uniNonComm} can be applied for functional means and covariance
operators
respectively to bound the pairwise p-values.  The computed test 
statistics can be aggregated using Theorem~\ref{thm:syncTest} to
get a global p-value.
Note that a standard permutation test would require the computation
of $264=66\times4$ test statistics via simulation from 
the symmetric group, which in the case of covariance operators and 
Schatten norms implies 264 SVD calculations per permutation.
This is further expanded by, say, performing $132,000=66\times2000$
permutations to be able to test each hypothesis at the 0.01 level
after correcting for multiple testing.  Focusing only on the 
approximately 35 million required SVDs, a timing test run on 
an Intel Core i7-7567U CPU at 3.50GHz estimates 36 hours 
of compute time when considering $100\time100$ dimensional 
matrices and an estimated run time of 74 days on $400\times400$ 
dimensional matrices.

This approach was applied pairwise to the sample covariance operators
for each vowel as past work has emphasized that the covariance 
structure of speech data is the best lens to detect
phonological differences
\citep{PIGOLI2014,PIGOLI2018}.
Application of Theorem~\ref{thm:uniNonComm} using the 
trace norm (1-Schatten norm) and using the 
empirical beta adjustment from Section~\ref{sec:betaAdj}
produced the 66 pairwise p-values displayed in 
Figure~\ref{fig:vowelCorrplot}.  
The words are also grouped by
p-value to display which 
vowel phonemes proved statistically indistinguishable using
our proposed methodology.
The use of other Schatten norms results in lower power---i.e.
fewer null hypotheses rejected. 

The blocking factors \{male,female\} and \{Canadian, Chinese\}
can also be similarly tested without removing the row and column
effects from the Latin square design; otherwise, the mean taken over
the entire dataset will be zero.  
In trace norm, we get 
p-values of 0.0002 and 0.00003 for sex and country, respectively.
In Hilbert-Schmidt norm, we get the weaker p-values 0.03 and 0.07.

\begin{figure}
	\begin{center}
		\includegraphics[width=\textwidth]{\PICDIR/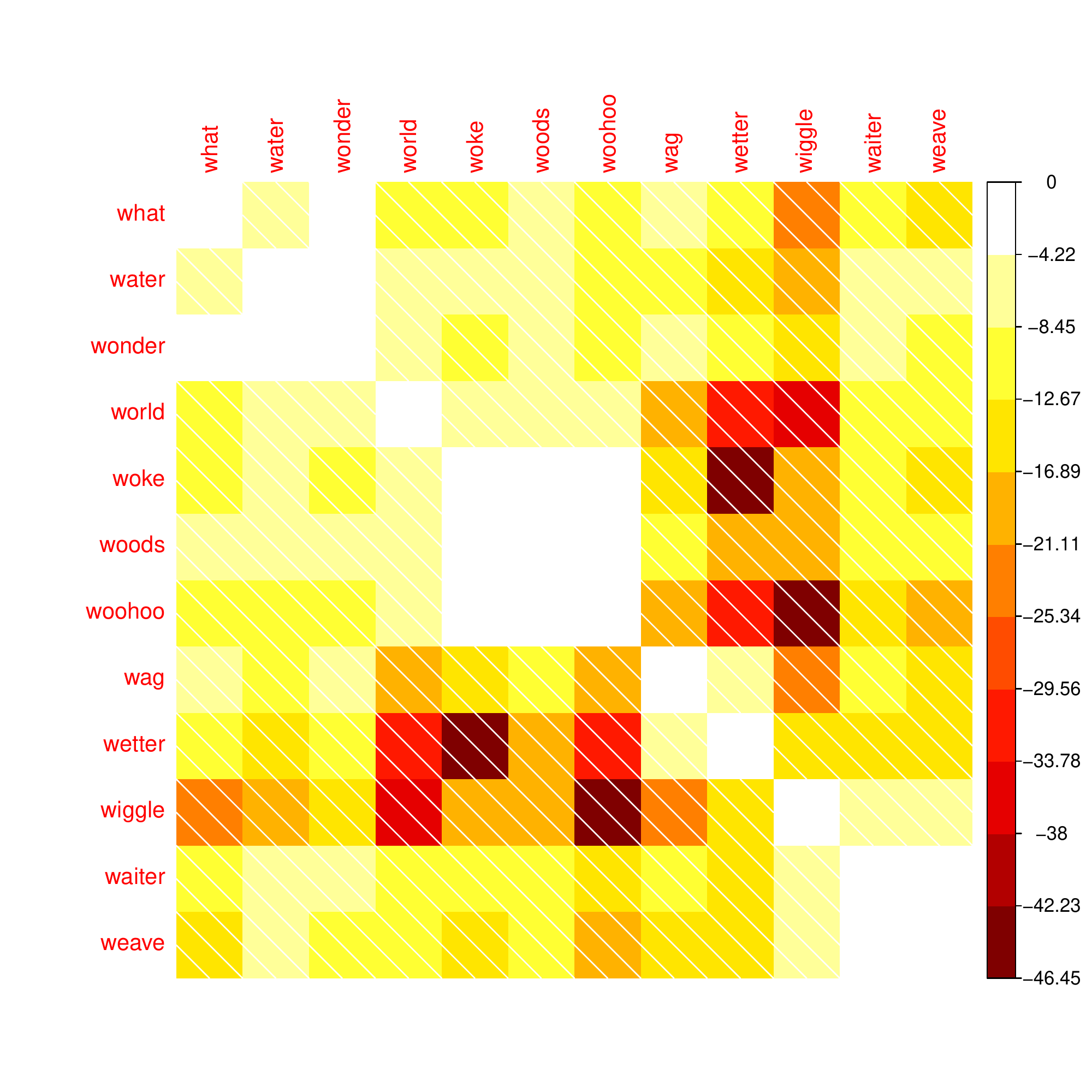}
	\end{center}
	\caption{
		\label{fig:vowelCorrplot}
		$\log_{10}$ p-values for pairwise two sample tests
		between vowel pairs under the trace norm.
	}
\end{figure}

\section{Discussion}

The p-value has stood for over a century as a pillar of 
frequentist statistical methodology.
In this article, we approached $k$-sample testing through 
application of an analytic approximation to the permutation test 
p-value notably 
without relying on simulation
of the permutation distribution of the test statistic.
Experimental design for functional data was the main 
motivation for this work as standard simulation-based
permutation testing can be applied but at a high computational
cost.  
Other applications of interest include online testing
where data must be processed, results returned, and decisions
made in real time.  The lag resulting from a classic permutation
test is unacceptable in such settings.
This methodology 
is generally applicable to other complex testing settings 
including other types of group invariances---e.g. 
rotationally invariant test statistics.
Furthermore, the duality of hypothesis testing with confidence
sets suggests investigation into using variants of 
the Kahane--Khintchine inequality to construct confidence balls for 
estimators with finite sample guarantees on the coverage.

\section*{Supplementary Material}

Primarily, the supplementary material contains proofs of the 
main results as well as auxilary theorems and lemmas.  
Secondly, it contains further exposition of this methodology
on simulated and real data sets.  In particular, these methods
are shown to produce correct p-values in the null setting.

\bibliographystyle{Chicago}

\def\BIBDIRWIN{.}
\bibliography{\BIBDIRWIN/kasharticle,\BIBDIRWIN/kashbook,\BIBDIRWIN/kashpack,\BIBDIRWIN/kashself}

\begin{thebibliography}{}

\bibitem[\protect\citeauthoryear{Agresti}{Agresti}{1992}]{AGRESTI1992}
Agresti, A. (1992).
\newblock A survey of exact inference for contingency tables.
\newblock {\em Statistical science\/}~{\em 7\/}(1), 131--153.

\bibitem[\protect\citeauthoryear{Basso, Pesarin, Salmaso, and Solari}{Basso
  et~al.}{2009}]{BASSO2009}
Basso, D., F.~Pesarin, L.~Salmaso, and A.~Solari (2009).
\newblock {\em Permutation tests for stochastic ordering and ANOVA: theory and
  applications with R}, Volume 194.
\newblock Springer Science \& Business Media.

\bibitem[\protect\citeauthoryear{Boucheron, Lugosi, and Massart}{Boucheron
  et~al.}{2013}]{BOUCHERON2013}
Boucheron, S., G.~Lugosi, and P.~Massart (2013).
\newblock {\em Concentration inequalities: A nonasymptotic theory of
  independence}.
\newblock Oxford University Press.

\bibitem[\protect\citeauthoryear{Bousquet}{Bousquet}{2003}]{BOUSQUET2003}
Bousquet, O. (2003).
\newblock Concentration inequalities for sub-additive functions using the
  entropy method.
\newblock In {\em Stochastic inequalities and applications}, pp.\  213--247.
  Springer.

\bibitem[\protect\citeauthoryear{Brombin and Salmaso}{Brombin and
  Salmaso}{2013}]{BROMBIN2013}
Brombin, C. and L.~Salmaso (2013).
\newblock {\em Permutation tests in shape analysis}, Volume~15.
\newblock Springer.

\bibitem[\protect\citeauthoryear{Burak and Kashlak}{Burak and
  Kashlak}{2021}]{KASHLAK_BURAK_WILDBS}
Burak, K.~L. and A.~B. Kashlak (2021).
\newblock Nonparametric confidence regions via the analytic wild bootstrap.
\newblock {\em (under review)\/}.

\bibitem[\protect\citeauthoryear{Cabassi, Pigoli, Secchi, and Carter}{Cabassi
  et~al.}{2017}]{CABASSI2017}
Cabassi, A., D.~Pigoli, P.~Secchi, and P.~A. Carter (2017).
\newblock Permutation tests for the equality of covariance operators of
  functional data with applications to evolutionary biology.
\newblock {\em Electronic Journal of Statistics\/}~{\em 11\/}(2), 3815--3840.

\bibitem[\protect\citeauthoryear{Chakraborty and Chaudhuri}{Chakraborty and
  Chaudhuri}{2015}]{CHAKRABORTY2015}
Chakraborty, A. and P.~Chaudhuri (2015).
\newblock A wilcoxon--mann--whitney-type test for infinite-dimensional data.
\newblock {\em Biometrika\/}~{\em 102\/}(1), 239--246.

\bibitem[\protect\citeauthoryear{Corain, Melas, Pepelyshev, and Salmaso}{Corain
  et~al.}{2014}]{CORAIN2014}
Corain, L., V.~B. Melas, A.~Pepelyshev, and L.~Salmaso (2014).
\newblock New insights on permutation approach for hypothesis testing on
  functional data.
\newblock {\em Advances in Data Analysis and Classification\/}~{\em 8\/}(3),
  339--356.

\bibitem[\protect\citeauthoryear{Cox and Lee}{Cox and Lee}{2008}]{COX2008}
Cox, D.~D. and J.~S. Lee (2008).
\newblock Pointwise testing with functional data using the westfall--young
  randomization method.
\newblock {\em Biometrika\/}~{\em 95\/}(3), 621--634.

\bibitem[\protect\citeauthoryear{De~la Pena and Gin{\'e}}{De~la Pena and
  Gin{\'e}}{2012}]{DELAPENAGINE2012}
De~la Pena, V. and E.~Gin{\'e} (2012).
\newblock {\em Decoupling: from dependence to independence}.
\newblock Springer Science \& Business Media.

\bibitem[\protect\citeauthoryear{Diestel, Jarchow, and Tonge}{Diestel
  et~al.}{1995}]{DIESTEL1995}
Diestel, J., H.~Jarchow, and A.~Tonge (1995).
\newblock {\em Absolutely summing operators}, Volume~43.
\newblock Cambridge university press.

\bibitem[\protect\citeauthoryear{Ferraty and Vieu}{Ferraty and
  Vieu}{2006}]{FERRATYVIEU2006}
Ferraty, F. and P.~Vieu (2006).
\newblock {\em Nonparametric functional data analysis: theory and practice}.
\newblock Springer Science \& Business Media.

\bibitem[\protect\citeauthoryear{Garling}{Garling}{2007}]{GARLING2007}
Garling, D.~J. (2007).
\newblock {\em Inequalities: a journey into linear analysis}.
\newblock Cambridge University Press.

\bibitem[\protect\citeauthoryear{Gin{\'e} and Nickl}{Gin{\'e} and
  Nickl}{2016}]{GINENICKL2015}
Gin{\'e}, E. and R.~Nickl (2016).
\newblock {\em Mathematical Foundations of Infinite-Dimensional Statistical
  Models}.
\newblock Cambridge University Press.

\bibitem[\protect\citeauthoryear{Good}{Good}{1956}]{GOOD1956}
Good, I. (1956).
\newblock On the estimation of small frequencies in contingency tables.
\newblock {\em Journal of the Royal Statistical Society: Series B
  (Methodological)\/}~{\em 18\/}(1), 113--124.

\bibitem[\protect\citeauthoryear{Good}{Good}{2013}]{GOOD2013}
Good, P. (2013).
\newblock {\em Permutation tests: a practical guide to resampling methods for
  testing hypotheses}.
\newblock Springer Science \& Business Media.

\bibitem[\protect\citeauthoryear{Gretton, Borgwardt, Rasch, Sch{\"o}lkopf, and
  Smola}{Gretton et~al.}{2012}]{GRETTON2012}
Gretton, A., K.~M. Borgwardt, M.~J. Rasch, B.~Sch{\"o}lkopf, and A.~Smola
  (2012).
\newblock A kernel two-sample test.
\newblock {\em Journal of Machine Learning Research\/}~{\em 13\/}(Mar),
  723--773.

\bibitem[\protect\citeauthoryear{Haagerup}{Haagerup}{1981}]{HAAGERUP1981}
Haagerup, U. (1981).
\newblock The best constants in the {Khintchine} inequality.
\newblock {\em Studia Mathematica\/}~{\em 70}, 231--283.

\bibitem[\protect\citeauthoryear{Hall}{Hall}{2013}]{HALL2013}
Hall, P. (2013).
\newblock {\em The bootstrap and Edgeworth expansion}.
\newblock Springer Science \& Business Media.

\bibitem[\protect\citeauthoryear{Hastie, Buja, and Tibshirani}{Hastie
  et~al.}{1995}]{HASTIE1995PDA}
Hastie, T., A.~Buja, and R.~Tibshirani (1995).
\newblock Penalized discriminant analysis.
\newblock {\em The Annals of Statistics\/}, 73--102.

\bibitem[\protect\citeauthoryear{He, Basu, Zhao, and Owen}{He
  et~al.}{2019}]{HE2019}
He, H.~Y., K.~Basu, Q.~Zhao, and A.~B. Owen (2019).
\newblock Permutation $ p $-value approximation via generalized stolarsky
  invariance.
\newblock {\em The Annals of Statistics\/}~{\em 47\/}(1), 583--611.

\bibitem[\protect\citeauthoryear{Hemerik and Goeman}{Hemerik and
  Goeman}{2018}]{HEMERIK2018}
Hemerik, J. and J.~Goeman (2018).
\newblock Exact testing with random permutations.
\newblock {\em Test\/}~{\em 27\/}(4), 811--825.

\bibitem[\protect\citeauthoryear{Holm}{Holm}{1979}]{HOLM1979}
Holm, S. (1979).
\newblock A simple sequentially rejective multiple test procedure.
\newblock {\em Scandinavian journal of statistics\/}, 65--70.

\bibitem[\protect\citeauthoryear{Kahane}{Kahane}{1964}]{KAHANE1964}
Kahane, J.-P. (1964).
\newblock Sur les sommes vectorielles sigma plus minus un.
\newblock {\em Comptes rendus hebdomadaires des seances de l'academie des
  sciences\/}~{\em 259\/}(16), 2577.

\bibitem[\protect\citeauthoryear{Kallenberg}{Kallenberg}{2006}]{KALLENBERG2006}
Kallenberg, O. (2006).
\newblock {\em Probabilistic symmetries and invariance principles}.
\newblock Springer Science \& Business Media.

\bibitem[\protect\citeauthoryear{Kashlak and Yuan}{Kashlak and
  Yuan}{2020}]{KASHLAK_YUAN_ABELECT}
Kashlak, A.~B. and W.~Yuan (2020).
\newblock Computation-free nonparametric testing for local and global spatial
  autocorrelation with application to the canadian electorate.
\newblock {\em arXiv preprint arXiv:2012.08647\/}.

\bibitem[\protect\citeauthoryear{Klein and Rio}{Klein and
  Rio}{2005}]{KLEINRIO2005}
Klein, T. and E.~Rio (2005).
\newblock Concentration around the mean for maxima of empirical processes.
\newblock {\em The Annals of Probability\/}~{\em 33\/}(3), 1060--1077.

\bibitem[\protect\citeauthoryear{Kwapien}{Kwapien}{1987}]{KWAPIEN1987}
Kwapien, S. (1987).
\newblock Decoupling inequalities for polynomial chaos.
\newblock {\em The Annals of Probability\/}~{\em 15\/}(3), 1062--1071.

\bibitem[\protect\citeauthoryear{Lata{\l}a and Oleszkiewicz}{Lata{\l}a and
  Oleszkiewicz}{1994}]{LATALA1994}
Lata{\l}a, R. and K.~Oleszkiewicz (1994).
\newblock On the best constant in the {Khintchine}--{Kahane} inequality.
\newblock {\em Studia Mathematica\/}~{\em 109\/}(1), 101--104.

\bibitem[\protect\citeauthoryear{Ledoux and Talagrand}{Ledoux and
  Talagrand}{1991}]{LEDOUXTALAGRAND1991}
Ledoux, M. and M.~Talagrand (1991).
\newblock {\em Probability in Banach Spaces: isoperimetry and processes},
  Volume~23.
\newblock Springer.

\bibitem[\protect\citeauthoryear{Ligges, Krey, Mersmann, and
  Schnackenberg}{Ligges et~al.}{2018}]{TUNER}
Ligges, U., S.~Krey, O.~Mersmann, and S.~Schnackenberg (2018).
\newblock {\em {tuneR}: Analysis of Music and Speech}.

\bibitem[\protect\citeauthoryear{Mielke and Berry}{Mielke and
  Berry}{2007}]{MIELKE2007}
Mielke, P.~W. and K.~J. Berry (2007).
\newblock {\em Permutation methods: a distance function approach}.
\newblock Springer Science \& Business Media.

\bibitem[\protect\citeauthoryear{Pesarin and Salmaso}{Pesarin and
  Salmaso}{2010}]{PESARIN2010}
Pesarin, F. and L.~Salmaso (2010).
\newblock {\em Permutation tests for complex data: theory, applications and
  software}.
\newblock John Wiley \& Sons.

\bibitem[\protect\citeauthoryear{Pigoli, Aston, Dryden, and Secchi}{Pigoli
  et~al.}{2014}]{PIGOLI2014}
Pigoli, D., J.~A. Aston, I.~L. Dryden, and P.~Secchi (2014).
\newblock Distances and inference for covariance operators.
\newblock {\em Biometrika\/}, asu008.

\bibitem[\protect\citeauthoryear{Pigoli, Hadjipantelis, Coleman, and
  Aston}{Pigoli et~al.}{2018}]{PIGOLI2018}
Pigoli, D., P.~Z. Hadjipantelis, J.~S. Coleman, and J.~A. Aston (2018).
\newblock The statistical analysis of acoustic phonetic data: exploring
  differences between spoken romance languages.
\newblock {\em Journal of the Royal Statistical Society: Series C (Applied
  Statistics)\/}~{\em 67\/}(5), 1103--1145.

\bibitem[\protect\citeauthoryear{Pisier and Xu}{Pisier and
  Xu}{2003}]{PISIERXU2003}
Pisier, G. and Q.~Xu (2003).
\newblock Non-commutative lp-spaces.
\newblock {\em Handbook of the geometry of Banach spaces\/}~{\em 2},
  1459--1517.

\bibitem[\protect\citeauthoryear{Ramsay and Silverman}{Ramsay and
  Silverman}{2005}]{RAMSAYSILVERMAN2005}
Ramsay, J.~O. and B.~W. Silverman (2005).
\newblock {\em Functional data analysis}.
\newblock New York: Springer.

\bibitem[\protect\citeauthoryear{Ramsay, Wickham, Graves, and Hooker}{Ramsay
  et~al.}{2018}]{FDAPACK}
Ramsay, J.~O., H.~Wickham, S.~Graves, and G.~Hooker (2018).
\newblock {\em fda: Functional Data Analysis}.
\newblock R package version 2.4.8.

\bibitem[\protect\citeauthoryear{Romano and Wolf}{Romano and
  Wolf}{2005}]{ROMANO2005}
Romano, J.~P. and M.~Wolf (2005).
\newblock Exact and approximate stepdown methods for multiple hypothesis
  testing.
\newblock {\em Journal of the American Statistical Association\/}~{\em
  100\/}(469), 94--108.

\bibitem[\protect\citeauthoryear{Segal, Braun, Elliott, and Jiang}{Segal
  et~al.}{2018}]{SEGAL2018}
Segal, B.~D., T.~Braun, M.~R. Elliott, and H.~Jiang (2018).
\newblock Fast approximation of small p-values in permutation tests by
  partitioning the permutations.
\newblock {\em Biometrics\/}~{\em 74\/}(1), 196--206.

\bibitem[\protect\citeauthoryear{Shang and Hyndman}{Shang and
  Hyndman}{2013}]{FDSPACK}
Shang, H.~L. and R.~J. Hyndman (2013).
\newblock {\em fds: Functional data sets}.
\newblock R package version 1.7.

\bibitem[\protect\citeauthoryear{Solomon and Stephens}{Solomon and
  Stephens}{1978}]{SOLOMON1978}
Solomon, H. and M.~A. Stephens (1978).
\newblock Approximations to density functions using pearson curves.
\newblock {\em Journal of the American Statistical Association\/}~{\em
  73\/}(361), 153--160.

\bibitem[\protect\citeauthoryear{Spektor}{Spektor}{2014}]{SPEKTOR_THESIS}
Spektor, S. (2014).
\newblock {\em Selected Topics in Asymptotic Geometric Analysis and
  Approximation Theory}.
\newblock Ph.\ D. thesis, University of Alberta.

\bibitem[\protect\citeauthoryear{Spektor}{Spektor}{2016}]{SPEKTOR2016}
Spektor, S. (2016).
\newblock Restricted {Khinchine} inequality.
\newblock {\em Canadian Mathematical Bulletin\/}~{\em 59\/}(1), 204--210.

\bibitem[\protect\citeauthoryear{Stuart, Arnold, Ord, O'Hagan, and
  Forster}{Stuart et~al.}{1994}]{KENDALL}
Stuart, A., S.~Arnold, J.~K. Ord, A.~O'Hagan, and J.~Forster (1994).
\newblock {\em Kendall's advanced theory of statistics}.
\newblock Wiley.

\bibitem[\protect\citeauthoryear{Talagrand}{Talagrand}{1996}]{TALAGRAND1996CON}
Talagrand, M. (1996).
\newblock New concentration inequalities in product spaces.
\newblock {\em Inventiones mathematicae\/}~{\em 126\/}(3), 505--563.

\bibitem[\protect\citeauthoryear{Watson}{Watson}{1959}]{WATSON1959}
Watson, G. (1959).
\newblock A note on gamma functions.
\newblock {\em Edinburgh Mathematical Notes\/}~{\em 42}, 7--9.

\bibitem[\protect\citeauthoryear{Westfall and Young}{Westfall and
  Young}{1993}]{WESTFALL_YOUNG}
Westfall, P.~H. and S.~S. Young (1993).
\newblock {\em Resampling-based multiple testing: Examples and methods for
  p-value adjustment}, Volume 279.
\newblock John Wiley \& Sons.

\bibitem[\protect\citeauthoryear{Winkler, Ridgway, Douaud, Nichols, and
  Smith}{Winkler et~al.}{2016}]{WINKLER2016}
Winkler, A.~M., G.~R. Ridgway, G.~Douaud, T.~E. Nichols, and S.~M. Smith
  (2016).
\newblock Faster permutation inference in brain imaging.
\newblock {\em Neuroimage\/}~{\em 141}, 502--516.

\bibitem[\protect\citeauthoryear{Wu and Hamada}{Wu and Hamada}{2011}]{WUHAMADA}
Wu, C.~J. and M.~S. Hamada (2011).
\newblock {\em Experiments: planning, analysis, and optimization}, Volume 552.
\newblock John Wiley \& Sons.

\bibitem[\protect\citeauthoryear{Yang, Trucco, and Buu}{Yang
  et~al.}{2019}]{YANG2019}
Yang, J.~J., E.~M. Trucco, and A.~Buu (2019).
\newblock A hybrid method of the sequential monte carlo and the edgeworth
  expansion for computation of very small p-values in permutation tests.
\newblock {\em Statistical methods in medical research\/}~{\em 28\/}(10-11),
  2937--2951.

\end{thebibliography}

\appendix

\section{Inequalities}
\label{app:ineq}
\subsection{Khintchine-type Inequalities}

\begin{thm}[Khintchine's Inequality (1923)]
	\label{thm:classicKhintchine}
	For any $p\in(0,\infty)$, there exist positive 
	finite constants $A_p$ and $B_p$ such that for 
	any sequence $x_1,\ldots,x_n\in\real$ (or $x_i\in\complex$),
	$$
	A_p^p\norm{x}_{\ell^2}^p \le \xv\abs*{\sum_{i=1}^n 
		\veps_i x_i}^p
	\le B_p^p\norm{x}_{\ell^2}^p
	$$
	where $\veps_1,\ldots,\veps_n$ are iid Rademacher random variables---i.e. $\prob{\veps_i=1}=\prob{\veps_i=-1}=1/2$.
\end{thm}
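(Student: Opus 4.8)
The plan is to follow the classical route, splitting into the upper and lower inequalities and, within each, the ranges $p\ge 2$ and $0<p<2$; by homogeneity it suffices to treat $\norm{x}_{\ell^2}=1$. Write $S=\sum_{i=1}^n\veps_i x_i$, and record at the outset that $\xv S^2=\sum_{i=1}^n x_i^2=\norm{x}_{\ell^2}^2=1$ since the $\veps_i$ are orthonormal in $L^2$.

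Three of the four cases are immediate. By monotonicity of $L^q(\Omega)$-norms in $q$, for $p\ge 2$ we get $(\xv\abs{S}^p)^{1/p}\ge(\xv S^2)^{1/2}=1$, giving the lower bound with $A_p=1$; for $p\le 2$ we get $(\xv\abs{S}^p)^{1/p}\le(\xv S^2)^{1/2}=1$, giving the upper bound with $B_p=1$. For the remaining upper bound, $p>2$, I would first prove the sub-Gaussian estimate $\xv \ee^{\lambda S}=\prod_{i=1}^n\cosh(\lambda x_i)\le\prod_{i=1}^n \ee^{\lambda^2 x_i^2/2}=\ee^{\lambda^2/2}$ from the elementary inequality $\cosh t\le \ee^{t^2/2}$; a Chernoff bound then yields $\prob{\abs{S}>t}\le 2\ee^{-t^2/2}$, and substituting into $\xv\abs{S}^p=\int_0^\infty pt^{p-1}\prob{\abs{S}>t}\,dt$ and changing variables gives $\xv\abs{S}^p\le p\,2^{p/2}\Gamma(p/2)$, so one may take $B_p=\bigl(p\,2^{p/2}\Gamma(p/2)\bigr)^{1/p}=O(\sqrt p)$.

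The one genuinely nontrivial case, and the main obstacle, is the lower bound for $0<p<2$: a single low moment carries no information by itself, so one cannot bound it from below directly. The device I would use is to pin down $\xv\abs{S}^p$ via a \emph{known higher moment} together with log-convexity. Expanding the fourth moment, $\xv S^4=\sum_i x_i^4+3\sum_{i\ne j}x_i^2x_j^2=3\bigl(\sum_i x_i^2\bigr)^2-2\sum_i x_i^4\le 3$. Since $q\mapsto\log\xv\abs{S}^q$ is convex (Lyapunov's inequality) and $2=\lambda p+(1-\lambda)\cdot 4$ with $\lambda=2/(4-p)\in(1/2,1)$, we obtain $1=\xv S^2\le(\xv\abs{S}^p)^{\lambda}(\xv S^4)^{1-\lambda}\le 3^{1-\lambda}(\xv\abs{S}^p)^{\lambda}$; solving for $\xv\abs{S}^p$ gives $\xv\abs{S}^p\ge 3^{-(1-\lambda)/\lambda}=3^{-(2-p)/2}$, so $A_p=3^{-(2-p)/(2p)}$ works.

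Finally, I would note that the constants produced this way are far from optimal; the sharp values (Haagerup's constants, and Szarek's $A_1=1/\sqrt 2$) require a separate and considerably more delicate analysis. The crude bounds above are nonetheless exactly what is needed here, where the commutative and non-commutative Kahane--Khintchine extensions are invoked only with universal (non-sharp) constants and the remaining slack is later absorbed by the beta and empirical beta transforms of Section~\ref{sec:betaAdj}.
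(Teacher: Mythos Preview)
Your proof is correct and self-contained. However, you should be aware that the paper does \emph{not} supply its own proof of this classical result: Theorem~\ref{thm:classicKhintchine} is merely stated as background, with a reference to \cite{GARLING2007} for the explicit upper constant $B_{2p}=[(2p)!/2^pp!]^{1/2p}$ (and later \cite{HAAGERUP1981} for optimal constants). Only the upper bound for even moments is actually used downstream, in the proofs of Theorems~2.1--2.3 and~3.1.

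Comparing approaches: your upper bound for $p>2$ goes through the sub-Gaussian MGF and a tail-integral, yielding $B_p^p=p\,2^{p/2}\Gamma(p/2)$; the constant cited from \cite{GARLING2007}, $B_{2p}^{2p}=(2p)!/2^pp!$, is obtained instead by a direct multinomial expansion of $\xv S^{2p}$, and is slightly sharper but of the same order $B_p=O(\sqrt p)$. Your treatment of the lower bound for $0<p<2$ via the fourth moment and log-convexity is the standard Paley--Zygmund/H\"older extrapolation and is perfectly valid; the paper never needs or mentions a lower bound. Your closing remark about absorbing slack through the beta transforms of Section~\ref{sec:betaAdj} accurately reflects how the paper handles non-optimal constants in practice.
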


For this article, we are only concerned with the upper bound
$B_{2p}$ for $p>2$. In \cite{GARLING2007}, 
$B_{2p} = [(2p)!/2^pp!]^{1/2p}$ which gives
$B_{2p}<\sqrt{2p}$, 
but also via Stirling's inequality $B_p\sim(p/e)^{1/2}$
as $p\rightarrow\infty$.
The expectation in above theorem is with respect to the 
$\veps_i$ corresponding to a uniform distribution on the 
$2^n$ vertices of the $n$-hypercube.  In what follows,
we consider expectation over the uniform distribution
on the $n!$ elements of the symmetric group $\mathbb{S}_n$.
This will be denoted $\xv_\pi$ where $\pi\in\mathbb{S}_n$
is treated as a uniform random permutation.

In \cite{SPEKTOR2016}, the restricted Khintchine inequality
is introduced where it is required that $\sum_{i=1}^n \veps_i=0$
introducing a weak dependency among the $\veps_i$.  
In the proof in \cite{SPEKTOR2016}, this weak dependency 
doubles the variance by comparing two 
sets of data. Thus, the constant
becomes $B_{2p} = [(2p)!/p!]^{1/2p}$.

\begin{thm}[\cite{SPEKTOR2016} Theorem 1.1]
	\label{thm:conditionalKhintchine}
	For any $p\in[2,\infty)$, there exist positive 
	finite constant $B_p$ such that for 
	any sequence $x_1,\ldots,x_n\in\real$,
	\begin{equation}
	\label{eqn:susannaTheorem}
	\xv\abs*{\sum_{i=1}^n \veps_i x_i}^p
	\le B_p^p\left( 
	\norm{x}_{\ell^2}^2-n\bar{x}^2
	\right)^{p/2} 
	= B_p^p[(n-1)s_n^2]^{p/2}
	\end{equation}
	where $\veps_1,\ldots,\veps_n$ are Rademacher random variables
	such that $\sum \veps_i=0$ and 
	$s_n^2 = (n-1)^{-1}\sum_{i=1}^n(x_i-\bar{x})^2$ 
	is the sample variance of $x$.
\end{thm}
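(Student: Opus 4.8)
The plan is to deduce the restricted Khintchine inequality of Theorem~\ref{thm:conditionalKhintchine} from the classical inequality of Theorem~\ref{thm:classicKhintchine} by exhibiting a convenient representation of a uniformly random sign vector constrained by $\sum_{i=1}^n\veps_i=0$, in the spirit of Spektor's original argument. Two cost-free reductions come first. The constraint forces $n=2m$ to be even, and replacing each $x_i$ by $x_i-\bar x$ leaves $\sum_i\veps_i x_i$ unchanged, since $\bar x\sum_i\veps_i=0$, while keeping the right-hand side equal to $B_{2p}^{2p}((n-1)s_n^2)^{p}$; hence we may assume $\bar x=0$, so that it suffices to establish $\xv\abs*{\sum_i\veps_i x_i}^{2p}\le\frac{(2p)!}{p!}\norm{x}_{\ell^2}^{2p}$ for positive integers $p$, i.e.\ for even exponents $2p$. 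The statement for general $p\in[2,\infty)$ then follows by monotonicity of $L^p$-norms (pass to the $2\lceil p/2\rceil$-th moment) or by log-convexity of the moment sequence, at worst absorbing a universal factor into $B_p$.

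The heart of the argument is the representation of $\veps$ as a random pairing of the coordinates decorated with free signs. Partition $\{1,\dots,2m\}$ into $m$ unordered pairs uniformly at random, write the $k$-th pair as $\{a_k,b_k\}$ with an arbitrary labelling of its two elements, take $\eta_1,\dots,\eta_m$ i.i.d.\ Rademacher and independent of the partition, and set $\veps_{a_k}=\eta_k$ and $\veps_{b_k}=-\eta_k$. Then $\sum_j\veps_j=0$ automatically, and a short counting argument---each sum-zero sign vector arises from exactly $m!$ of these configurations, namely the pairings matching its $+1$-coordinates to its $-1$-coordinates, the signs then being forced---shows that $\veps$ is uniform on the $\binom{2m}{m}$ sum-zero vectors. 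Under this coupling $\sum_{j}\veps_j x_j=\sum_{k=1}^m\eta_k y_k$ with $y_k=x_{a_k}-x_{b_k}$, so, writing $\xv_\eta$ for expectation over the signs alone and applying the classical Khintchine inequality (with Garling's constant $B_{2p}^{2p}=(2p)!/2^p p!$) conditionally on the partition,
$$\xv_\eta\Bigl|\sum_{k=1}^m\eta_k y_k\Bigr|^{2p}\le\frac{(2p)!}{2^p p!}\Bigl(\sum_{k=1}^m y_k^2\Bigr)^{p}.$$

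To finish, bound the coefficient vector deterministically: since each index lies in exactly one pair, $(a-b)^2\le 2a^2+2b^2$ gives $\sum_{k=1}^m y_k^2=\sum_{k=1}^m(x_{a_k}-x_{b_k})^2\le 2\sum_{j=1}^{2m}x_j^2=2\norm{x}_{\ell^2}^2$ for \emph{every} partition. Taking expectation over the partition then yields
$$\xv\Bigl|\sum_j\veps_j x_j\Bigr|^{2p}\le\frac{(2p)!}{2^p p!}\bigl(2\norm{x}_{\ell^2}^2\bigr)^{p}=\frac{(2p)!}{p!}\,\norm{x}_{\ell^2}^{2p},$$
which is the asserted bound, with $B_{2p}=[(2p)!/p!]^{1/2p}$; the extra factor $2^p$ absorbed at the last step is precisely the ``doubled variance'' arising from comparing the two random halves $\{x_{a_k}\}_k$ and $\{x_{b_k}\}_k$ of the data. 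The one genuinely delicate point will be verifying that the pairing-plus-signs construction is uniform on sum-zero vectors; everything else is the classical inequality together with the one-line estimate $(a-b)^2\le2a^2+2b^2$, whose slack is tight (equality when $a=-b$) and which is exactly what pins down the constant, so no sharper route should be expected.
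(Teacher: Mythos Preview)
Your proof is correct and follows essentially the same approach as the paper: pair the indices, attach i.i.d.\ Rademacher signs to the within-pair differences, apply the classical Khintchine inequality, and close with $(a-b)^2\le 2a^2+2b^2$ to pick up the doubling in the constant. The only cosmetic difference is in how the decoupling step is justified---you verify uniformity of the pairing-plus-signs construction by a counting argument, whereas the paper (see the proof of Theorem~\ref{thm:kahaneWDep}, which specializes to this statement) introduces the free signs sequentially via the symmetry of each $B_{k,\pi}=x_{\pi(k)}-x_{\pi(k+m)}$; both lead to the same conditional Khintchine application.
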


\begin{remark}
	In the statistics context, if we divide Inequality~\ref{eqn:susannaTheorem} 
	by $m=n/2$, we have
	$
	\xv_\pi\abs{ \bar{x}^{(\pi)}_1 - \bar{x}^{(\pi)}_2 }^p
	\le B_p(2s_n^2/m)^{p/2}
	$
	where $\bar{x}^{(\pi)}_1$ is the average of the first $m$ of the
	$x_{\pi(i)}$ for some random permutation $\pi$ and similarly
	for $\bar{x}^{(\pi)}_2$.
\end{remark}

The previous theorem only applies to a balanced two sample setting.
In the following, we extend the ideas in \cite{SPEKTOR2016} to 
the imbalanced testing setting.  Other such extensions to 
imbalanced Khintchine inequalities were considered in 
\cite{SPEKTOR_THESIS}.  Note that in the following theorem,
the bound on the right-hand-side is in terms of the smaller of 
the two sample sizes
$m_2 < m_1$ reducing the power drastically in a highly 
imbalanced setting.  Nevertheless, it still is seen to 
be an excellent statistical tool in a variety of applied 
settings after the beta correction
is applied \citep{KASHLAK_YUAN_ABELECT}.

\begin{thm}[Imbalanced Case]
	\label{thm:unbalancedKhintchine}
	For $m_1>m_2>0$, let $n = m_1+m_2$ and $M = m_1-m_2$
	and let $\kappa m_2 = m_1$ for some rational $\kappa>1$.
	Let $\delta_1,\ldots,\delta_n$ 
	be weighted dependent Rademacher random variables 
	such that marginally 
	$\prob{\delta_i=1/m_1} = \prob{\delta_i=-1/m_2}=1/2$
	and such that
	$\sum \delta_i=0$---i.e. precisely $m_1$ of the $\delta_i$ equal
	$1/m_1$ and $m_2$ equal $-1/m_2$.
	For any $p\in[2,\infty)$, 
	there exists a positive 
	finite constant $B_p$ such that for 
	any sequence $x_1,\ldots,x_n\in\real$,\footnote{This theorem
		is also valid for $x_i\in\complex$ after standard alterations
		are made in the proof.}
	$$
	\xv\abs*{\sum_{i=1}^n \delta_i x_i}^p
	\le 
	B_p\left(\frac{ \lceil\kappa+1\rceil^2{s}_{n}^2}{2m_2}\right)^{p/2}
	$$
	where 
	$s_n^2 = (n-1)^{-1}\sum_{i=1}^n(x_i-\bar{x})^2$ 
	is the sample variance of $x$.
\end{thm}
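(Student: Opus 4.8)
The plan is to adapt the doubling/symmetrization device of \cite{SPEKTOR2016} to the weighted setting, where the $\delta_i$ take the two distinct values $1/m_1$ and $-1/m_2$ with a fixed number of each. First I would reduce to a symmetric comparison: introduce an independent copy $\delta_1',\ldots,\delta_n'$ of the weighted dependent Rademacher vector and note that, by Jensen's inequality applied to the convex map $y\mapsto \abs{y}^p$ and the fact that $\xv\sum_i\delta_i'x_i=0$ (since exactly $m_1$ of the weights are $1/m_1$ and $m_2$ are $-1/m_2$, the mean cancels), we have
$$
\xv\abs[\Big]{\sum_{i=1}^n\delta_i x_i}^p \le \xv\abs[\Big]{\sum_{i=1}^n(\delta_i-\delta_i')x_i}^p.
$$
The difference $\delta_i-\delta_i'$ is now a \emph{symmetric} random variable, and conditionally on the unordered pair of configurations one can insert i.i.d.\ sign flips $\veps_i$, so that the right-hand side is dominated by an expression of the form $\xv_\delta\xv_\veps\abs{\sum_i \veps_i(\delta_i-\delta_i')x_i}^p$. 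This puts us in position to apply the classical Khintchine inequality (Theorem~\ref{thm:classicKhintchine}) conditionally on the $\delta$'s, with constant $B_p \le (p/e)^{1/2}$ asymptotically, or more precisely $B_{2p}=[(2p)!/2^pp!]^{1/2p}$, picking up the extra factor from the doubling exactly as in \cite{SPEKTOR2016}, which is why the constant there passes from $[(2p)!/2^pp!]^{1/2p}$ to $[(2p)!/p!]^{1/2p}$.

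The core of the argument is then the conditional $\ell^2$-norm bound: after the sign symmetrization, Khintchine leaves us needing to control $\xv_\delta\bigl(\sum_i(\delta_i-\delta_i')^2 x_i^2\bigr)^{p/2}$. Here I would again use Jensen (now in the other direction, since $p/2\ge 1$, only after first bounding $\sum_i(\delta_i-\delta_i')^2x_i^2$ by its conditional structure) to relate this to $\bigl(\xv_\delta\sum_i(\delta_i-\delta_i')^2 x_i^2\bigr)^{p/2}$ up to the Khintchine constant, and then compute $\xv[(\delta_i-\delta_i')^2]$. Each $\delta_i$ equals $1/m_1$ with probability $m_1/n$ and $-1/m_2$ with probability $m_2/n$ (by exchangeability of the coordinates of the dependent vector), so a direct computation gives $\xv[\delta_i^2] = \tfrac{1}{n}\bigl(\tfrac{1}{m_1}+\tfrac{1}{m_2}\bigr)$ and $\xv[(\delta_i-\delta_i')^2] = \tfrac{2}{n}\bigl(\tfrac{1}{m_1}+\tfrac{1}{m_2}\bigr)$. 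Crucially, because the $\delta_i$ are not independent — they are constrained by $\sum_i\delta_i=0$ — the same centering trick as in the balanced case kicks in: the bound on $\xv_\delta\sum_i(\delta_i-\delta_i')^2 x_i^2$, when the cross-terms from the dependency are accounted for, is in terms of the \emph{centered} sum $\sum_i(x_i-\bar x)^2 = (n-1)s_n^2$ rather than $\sum_i x_i^2$. Substituting $\tfrac{1}{m_1}+\tfrac{1}{m_2} = \tfrac{\kappa+1}{m_1} = \tfrac{1+\kappa^{-1}}{m_2}$ and using $n = m_1+m_2 = (\kappa+1)m_2$, one gets $(n-1)\bigl(\tfrac{1}{m_1}+\tfrac{1}{m_2}\bigr) \le \tfrac{(\kappa+1)^2}{\kappa m_2} \le \tfrac{\lceil\kappa+1\rceil^2}{m_2}$, and carrying the factor $\tfrac12$ from the symmetric difference through yields the claimed right-hand side $B_p\bigl(\tfrac{\lceil\kappa+1\rceil^2 s_n^2}{2m_2}\bigr)^{p/2}$.

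The main obstacle I anticipate is handling the dependency among the $\delta_i$ cleanly when passing from $\xv_\delta\bigl(\sum_i(\delta_i-\delta_i')^2x_i^2\bigr)^{p/2}$ to $\bigl(\xv_\delta\sum_i(\delta_i-\delta_i')^2x_i^2\bigr)^{p/2}$ and in verifying that the centering (replacing $x_i^2$ by $(x_i-\bar x)^2$) genuinely survives in the weighted case — in the balanced case of \cite{SPEKTOR2016} this is transparent because $\delta_i\in\{\pm 1/m\}$ so $\delta_i^2$ is constant, but with two distinct magnitudes $1/m_1$ and $1/m_2$ the quantity $\sum_i\delta_i^2x_i^2$ genuinely fluctuates with the configuration. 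I expect this is resolved by conditioning on \emph{which} coordinates receive the large weight and which receive the small one, reducing to a pair of balanced-type sums, then recombining; the rounding to $\lceil\kappa+1\rceil$ absorbs the slack from $\kappa$ not being an integer and from bounding $(\kappa+1)^2/\kappa \le \lceil\kappa+1\rceil^2$. Everything else — the two Jensen steps, the moment computation $\xv[\delta_i^2]$, and the invocation of Theorem~\ref{thm:classicKhintchine} — is routine, and the footnote's remark about the complex case follows by the usual real/imaginary-part splitting.
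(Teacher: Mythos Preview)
Your symmetrization route is genuinely different from what the paper does, but the key step --- ``conditionally on the unordered pair of configurations one can insert i.i.d.\ sign flips $\veps_i$'' --- is not valid. Given the unordered pair $\{\delta,\delta'\}$ the only residual randomness is a single \emph{global} sign (which copy is listed first), so the coordinates of $\delta-\delta'$ flip together, not independently. Concretely, write $\sum_i(\delta_i-\delta_i')x_i = w\sum_{i\in S}\sigma_i x_i$ with $w=\tfrac{1}{m_1}+\tfrac{1}{m_2}$, $S=\{i:\delta_i\ne\delta_i'\}$, and $\sigma_i\in\{\pm1\}$; a count forces $\sum_{i\in S}\sigma_i=0$, so on $S$ you are back to a \emph{balanced} constrained sign problem, not an i.i.d.\ one --- and balanced sums can have strictly larger moments than i.i.d.\ ones (for centred $x$ on $2k$ points the second moment is $\tfrac{2k}{2k-1}\sum x_i^2>\sum x_i^2$), so the desired domination fails. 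The second obstacle you flag, passing $\xv_\delta(\cdot)^{p/2}$ to $(\xv_\delta\cdot)^{p/2}$, is actually moot because $(\delta_i-\delta_i')^2\in\{0,w^2\}$ gives the deterministic bound $\sum_i(\delta_i-\delta_i')^2x_i^2\le w^2\sum_i x_i^2$; but this discards the factor $2/n$ in $\xv[(\delta_i-\delta_i')^2]$ and produces $w^2(n-1)<\tfrac{(\kappa+1)^3}{\kappa^2 m_2}$, which does not match the stated constant. Your arithmetic also slips: $(n-1)\bigl(\tfrac{1}{m_1}+\tfrac{1}{m_2}\bigr)<\tfrac{(\kappa+1)^2}{\kappa}$, not $\tfrac{(\kappa+1)^2}{\kappa m_2}$.

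The paper's proof is structurally unrelated to symmetrization. After centring the $x_i$, it decomposes the weighted sum as a balanced difference $\tfrac{1}{m_2}\bigl\{\sum_{i\le m_2}x_{\pi(i)}-\sum_{m_2<i\le 2m_2}x_{\pi(i)}\bigr\}$ plus a remainder which is itself a weighted dependent Rademacher sum of the same form but with $m_1$ replaced by $m_1-m_2$ (equivalently $\kappa$ replaced by $\kappa-1$). A convexity lemma --- minimising $\xi_1^{p-1}\xv X^p+\xi_2^{p-1}\xv Y^p$ over conjugate exponents $\xi_1^{-1}+\xi_2^{-1}=1$, yielding $\bigl[(\xv X^p)^{1/p}+(\xv Y^p)^{1/p}\bigr]^p$ --- combines the two pieces; the balanced piece is controlled by Theorem~\ref{thm:conditionalKhintchine} after averaging over which $M$ indices are held out, and the argument recurses. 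The resulting telescoping sum $1+\tfrac{\kappa-1}{\kappa}+\cdots+\tfrac{1}{\kappa}=\tfrac{\kappa+1}{2}$ is the source of the $\lceil\kappa+1\rceil^2/2$ factor, with non-integer $\kappa$ handled by a separate ceiling argument. If you want to salvage your direction, the honest fix is to apply Theorem~\ref{thm:conditionalKhintchine} \emph{on the random set $S$} rather than inserting i.i.d.\ signs; that is rigorous but delivers $\tfrac{(\kappa+1)^3}{\kappa^2 m_2}$ inside the $p/2$ power --- worse than the paper at $\kappa=1$ by a factor of~4, though asymptotically better for large $\kappa$.
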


\begin{lemma}
	\label{lem:convexity}
	Let $\xi_1,\xi_2\in[1,\infty]$ be such that $\xi_1^{-1}+\xi_2^{-1}=1$,
	and let $X,Y$ be positive real random variables.  Then,
	$$
	\min_{
		\xi_1,\xi_2:\, \xi_1^{-1}+\xi_2^{-1}=1
	}\xv  \left\{ \xi_1^{p-1}X^p + \xi_2^{p-1}Y^p \right\}
	= \left[ (\xv X^p)^{1/p} + (\xv Y^p)^{1/p} \right]^p.
	$$
\end{lemma}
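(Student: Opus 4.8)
The plan is to strip away the random variables immediately: since expectation is linear and nonnegative here, set $a = (\xv X^p)^{1/p}$ and $b = (\xv Y^p)^{1/p}$, so that for each feasible pair the quantity inside the minimum is simply $g(\xi_1,\xi_2) = \xi_1^{p-1}a^p + \xi_2^{p-1}b^p$. The assertion then reduces to the deterministic identity $\min g = (a+b)^p$ over $\xi_1^{-1}+\xi_2^{-1}=1$ with $\xi_1,\xi_2\in[1,\infty]$. If either moment is infinite both sides are $+\infty$ and there is nothing to prove, so I would assume $a,b<\infty$.

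Next I would prove the lower bound $g(\xi_1,\xi_2)\ge (a+b)^p$ for every feasible pair by a single application of Jensen's inequality. The point is to use $\xi_1^{-1}$ and $\xi_2^{-1}$ as convex weights (they are nonnegative and sum to one) and apply convexity of $t\mapsto t^p$ on $[0,\infty)$ — convex because $p\ge1$ throughout the paper — at the points $\xi_1 a$ and $\xi_2 b$:
\[
(a+b)^p = \bigl(\xi_1^{-1}(\xi_1 a) + \xi_2^{-1}(\xi_2 b)\bigr)^p \le \xi_1^{-1}(\xi_1 a)^p + \xi_2^{-1}(\xi_2 b)^p = \xi_1^{p-1}a^p + \xi_2^{p-1}b^p.
\]
Then I would exhibit a feasible minimizer. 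Assuming $a,b>0$, take $\xi_1 = (a+b)/a$ and $\xi_2 = (a+b)/b$; these satisfy $\xi_1^{-1}+\xi_2^{-1} = a/(a+b)+b/(a+b)=1$ and lie in $[1,\infty)$ since $a+b\ge\max\{a,b\}$. Substituting,
\[
\xi_1^{p-1}a^p + \xi_2^{p-1}b^p = (a+b)^{p-1}a^{-(p-1)}a^p + (a+b)^{p-1}b^{-(p-1)}b^p = (a+b)^p,
\]
which meets the lower bound, so $\min g = (a+b)^p = \bigl[(\xv X^p)^{1/p}+(\xv Y^p)^{1/p}\bigr]^p$. The only degenerate case, say $a=0$ (so $X=0$ almost surely), is covered by the boundary choice $\xi_1=\infty$, $\xi_2=1$ with the convention $\infty\cdot 0 = 0$, which gives $g = b^p = (a+b)^p$; the case $b=0$ is symmetric.

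I do not expect any real obstacle here: the argument is a two-line convexity computation. The one thing worth flagging is the choice of Jensen weights — $\xi_1^{-1},\xi_2^{-1}$ rather than the naive $1/2,1/2$ — and the observation that the resulting minimizer automatically respects the constraint $\xi_1,\xi_2\ge1$. I would also keep the hypothesis $p\ge1$ explicit, since convexity of $t\mapsto t^p$ is precisely what drives the lower bound and fails for $p<1$.
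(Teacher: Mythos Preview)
Your proof is correct, and it takes a genuinely different route from the paper's. The paper parametrizes the constraint via $\xi_2=\xi_1/(\xi_1-1)$, differentiates $\xi_1^{p-1}\xv X^p + \xi_2^{p-1}\xv Y^p$ with respect to $\xi_1$, solves the first-order condition to obtain $\xi_1 = 1 + (\xv Y^p/\xv X^p)^{1/p}$ and the analogous $\xi_2$, and then substitutes back to recover $[(\xv X^p)^{1/p}+(\xv Y^p)^{1/p}]^p$. Your argument instead gets the lower bound in one line from Jensen's inequality with weights $\xi_1^{-1},\xi_2^{-1}$ applied to $t\mapsto t^p$, and then checks attainment at the same pair $\xi_1=(a+b)/a$, $\xi_2=(a+b)/b$. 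The two arrive at the identical minimizer, but your version is cleaner: it establishes the global lower bound directly rather than via a critical-point calculation that, as written in the paper, does not separately verify the stationary point is a minimum. Your approach also makes the role of the hypothesis $p\ge1$ transparent (it is exactly the convexity of $t\mapsto t^p$), and your handling of the degenerate cases $a=0$ or $b=0$ via the endpoints $\xi_i=\infty$ is a small bonus the paper omits.
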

\begin{proof}
	We note that $\xi_2 = \xi_1/(\xi_1-1)$.  Then, 
	\begin{align*}
	0     &= \frac{d}{d\xi_1} \left\{\xi_1^{p-1}\xv X^p + \xi_2^{p-1}\xv Y^p\right\}\\
	&= (p-1)\xi_1^{p-2}\xv X^p - (p-1)\xi_1^{p-2}\xv Y^p/(\xi_1-1)^p\\
	\xi_1 &= 1 + \left( \xv Y^p/\xv X^p \right)^{1/p}\\
	\xi_2 &= 1 + \left( \xv X^p/\xv Y^p \right)^{1/p}
	\end{align*}
	Hence, 
	\begin{align*}
	&\min_{
		\xi_1,\xi_2:\, \xi_1^{-1}+\xi_2^{-1}=1
	}\xv  \left\{ \xi_1^{p-1}X^p + \xi_2^{p-1}Y^p \right\}\\
	&= \left[1+\left(\frac{\xv Y^p}{\xv X^p}\right)^{1/p}\right]^{p-1}\xv X^p + 
	\left[1+\left(\frac{\xv X^p}{\xv Y^p}\right)^{1/p}\right]^{p-1}\xv Y^p\\
	&= \left[(\xv Y^p)^{1/p}+(\xv X^p)^{1/p}\right]^{p-1}(\xv X^p)^{1/p} + 
	\left[(\xv X^p)^{1/p}+(\xv Y^p)^{1/p}\right]^{p-1}(\xv Y^p)^{1/p}\\
	&= [ (\xv X^p)^{1/p} + (\xv Y^p)^{1/p} ]^p 
	\end{align*}
\end{proof}
\begin{proof}[Proof of Theorem~\ref{thm:unbalancedKhintchine}]
	We first decompose the weighted Rademacher sum.  Without
	loss of generality, assume $m_1 > m_2$ and let $n=m_1+m_2$
	and $M = m_1-m_2$.  Also, assume the $x_i$ are centred---i.e.
	$\sum_{i=1}^nx_i=0$---and let $\xi_1,\xi_2>0$ such that 
	$\xi_1^{-1}+\xi_2^{-1}=1$.  Thus, via convexity, we have
	\begin{align*}
	\xv&\abs*{\sum_{i=1}^n \delta_i x_i}^p\\
	&= \xv_\pi\abs*{ \frac{1}{m_2}\sum_{i=1}^{m_2} x_{\pi(i)} - 
		\frac{1}{m_1}\sum_{i=m_2+1}^n x_{\pi(i)}  }^p\\
	&= \xv_\pi\abs*{ 
		\frac{1}{m_2}\left\{
		\sum_{i=1}^{m_2} x_{\pi(i)} - \sum_{i=m_2+1}^{2m_2} x_{\pi(i)}
		\right\} - \frac{1}{m_1}\sum_{i=2m_2+1}^n x_{\pi(i)}
		+ \frac{M}{m_1m_2}\sum_{i=m_2+1}^{2m_2} x_{\pi(i)}
	}^p\\
	&\le \frac{\xi_1^{p-1}}{m_2^p} \xv_\pi\abs*{ 
		\sum_{i=1}^{m_2} x_{\pi(i)} - \sum_{i=m_2+1}^{2m_2} x_{\pi(i)}
	}^p + \frac{\xi_2^{p-1}M^p}{m_1^p}\xv_\pi\abs*{
		\frac{1}{M}\sum_{i=2m_2+1}^n x_{\pi(i)}
		- \frac{1}{m_2}\sum_{i=m_2+1}^{2m_2} x_{\pi(i)}
	}^p\\
	&=\frac{\xi_1^{p-1}}{m_2^p} (\mathrm{I}) + 
	\frac{\xi_2^{p-1}M^p}{m_1^p}(\mathrm{II}).
	\end{align*}
	
	To bound $(\mathrm{I})$, we apply the balanced weakly 
	dependent Khintchine inequality.  Let $I\subset\{1,\ldots,n\}$ with 
	cardinality $\abs{I}=M$. For such an index set $I$, let
	$\Pi_I = \{ \pi\in\mathbb{S}_n \,:\, \pi(\{2m_2+1,\ldots,n\})=I \}$.
	That is, $\pi\in\Pi_I$ maps the final $M$ indices into $I$.  
	Note that $\abs{\Pi_I} = {n\choose M}$.  As a result,
	\begin{align*}
	\xv_\pi\abs*{ 
		\sum_{i=1}^{m_2} x_{\pi(i)} - \sum_{i=m_2+1}^{2m_2} x_{\pi(i)}
	}^p 
	&\le
	\frac{1}{n!}\sum_{\pi\in\mathbb{S}_n}
	\abs*{ 
		\sum_{i=1}^{m_2} x_{\pi(i)} - \sum_{i=m_2+1}^{2m_2} x_{\pi(i)}
	}^p\\
	&\le
	\frac{(n-M)!}{n!}\sum_{\abs{I}=M} \frac{1}{(n-M)!}\sum_{\pi\in\Pi_I}
	\abs*{ 
		\sum_{i=1}^{m_2} x_{\pi(i)} - \sum_{i=m_2+1}^{2m_2} x_{\pi(i)}
	}^p\\
	&\le
	\frac{M!(n-M)!}{n!}\sum_{\abs{I}=M}\left[
	B_p(2m_2-1)^{p/2} (\sum_{i\notin I}x_i^2)^{p/2}
	\right]\\
	&\le
	B_p(2m_2-1)^{p/2} {s}_{n}^{p}.
	\end{align*}
	As the $x_i$ are centred, we have that 
	$(\sum_{i\notin I}x_i^2)^{p/2} \le s_n^p = (\sum x_i^2)^{p/2}$,
	and hence
	$$
	\frac{\xi_1^{p-1}}{m_2^p} \xv_\pi\abs*{ 
		\sum_{i=1}^{m_2} x_{\pi(i)} - \sum_{i=m_2+1}^{2m_2} x_{\pi(i)}
	}^p 
	\le \xi_1^{p-1} 
	B_p(2m_2^{-1})^{p/2}{s}_{n}^{p}.
	$$
	
	For $(\mathrm{II})$, 
	we first assume that $\kappa$ is a positive integer and
	$m_1 = \kappa m_2$ so $M = (\kappa-1)m_2$.
	In this case, we have
	$$
	\frac{\xi_2^{p-1}M^p}{m_1^p}(\mathrm{II}) = \xi_2^{p-1}\left(\frac{\kappa-1}{\kappa}\right)^{p}
	\xv_\pi\abs*{
		\sum_{i=m_2+1}^n \tilde{\delta}_ix_{i}
	}^p
	$$ 
	where $\tilde{\delta}_i$ are weighted Rademacher random variables
	with taking values $1/M$ or $-1/m_2$ such that $\sum \tilde{\delta}_i=0$.
	Applying Lemma~\ref{lem:convexity} gives
	$$
	\xv_\pi\abs*{
		\sum_{i=1}^n {\delta}_ix_{i}
	}^p \le
	\left\{
	B_p^{1/p}\left(\frac{2{s}_{n}^2}{m_2}\right)^{1/2} + 
	\left(\frac{\kappa-1}{\kappa}\right)\left(
	\xv_\pi\abs*{
		\sum_{i=m_2+1}^n \tilde{\delta}_ix_{i}
	}^p
	\right)^{1/p}
	\right\}^{p}.
	$$
	Noting that 
	$
	\xv_\pi\abs*{
		\sum_{i=m_2+1}^n \tilde{\delta}_ix_{i}
	}^p
	$
	is merely the original term to be bounded but with 
	$m_1 = \kappa m_2$ and $M = (\kappa-1)m_2$ replaced by 
	$(\kappa-1)m_2$ and $(\kappa-2)m_2$, respectively, 
	we apply this idea $\kappa-1$ more times to get 
	\begin{align*}
	\xv_\pi\abs*{
		\sum_{i=1}^n {\delta}_ix_{i}
	}^p 
	&\le
	B_p\left(\frac{2{s}_{n}^2}{m_2}\right)^{p/2}
	\left\{
	1 + 
	\left(\frac{\kappa-1}{\kappa}\right)\left(
	1 + \left(\frac{\kappa-2}{\kappa-1}\right)\left(
	\cdots (1 + 1/2)\cdots
	\right)
	\right)
	\right\}^{p}\\
	&\le
	B_p\left(\frac{2{s}_{n}^2}{m_2}\right)^{p/2}
	\left\{
	1 + \frac{\kappa-1}{\kappa} + 
	\frac{\kappa-2}{\kappa} + \ldots + \frac{1}{\kappa}
	\right\}^{p}\\
	&\le
	B_p\left(\frac{{s}_{n}^2}{m_2}\frac{(\kappa+1)^2}{2}\right)^{p/2}.
	\end{align*}
	Noting that $n = m_1+m_2=(\kappa+1)m_2$, we have
	$$
	\xv_\pi\abs*{
		\sum_{i=1}^n {\delta}_ix_{i}
	}^p 
	\le
	B_p{s}_{n}^p\left(\frac{(\kappa+1)^3}{2n}\right)^{p/2}.
	$$
	Now, consider $\kappa = a + r\in\rational$ with $a\in\natural$ 
	and
	$r\in[0,1)$.  Then,
	\begin{align*}
	B_p&\left(\frac{2{s}_{n}^2}{m_2}\right)^{p/2}
	\left\{
	1 + \frac{\kappa-1}{\kappa} + 
	\frac{\kappa-2}{\kappa} + \ldots + \frac{r}{\kappa}
	\right\}^{p}\\
	&\le 
	B_p\left(\frac{2{s}_{n}^2}{m_2}\right)^{p/2}
	\left\{
	\frac{a}{\kappa} + \frac{a-1}{\kappa} + 
	\frac{a-2}{\kappa} + \ldots + \frac{1}{\kappa} + \frac{(a+1)r}{\kappa}
	\right\}^{p}\\
	&\le
	B_p\left(\frac{2{s}_{n}^2}{m_2}\right)^{p/2}
	\frac{1}{\kappa^p}
	\left\{
	\frac{a(a+1)}{2} + (a+1)r
	\right\}^{p}\\
	&\le
	B_p\left(\frac{{s}_{n}^2}{m_2}\frac{(a+1)^2}{2}\right)^{p/2}
	\left\{
	\frac{a + 2r}{a+r}
	\right\}^{p}\\
	&=
	B_p\left(\frac{{s}_{n}^2}{m_2}\frac{(a+1)^2}{2}\right)^{p/2}
	\left\{
	1+\frac{r}{\kappa}
	\right\}^{p}.\\
	\end{align*}
	Noting further that $ra+r < a+r$ so that 
	$1 + r/(a+r) < 1+1/(a+1)$, we multiply by $(a+1)$ on each
	side to get $(a+1)(1+r/(a+r)) < a+2$.  Hence, 
	$$
	B_p\left(\frac{2{s}_{n}^2}{m_2}\right)^{p/2}
	\left\{
	1 + \frac{\kappa-1}{\kappa} + 
	\frac{\kappa-2}{\kappa} + \ldots + \frac{r}{\kappa}
	\right\}^{p}
	\le
	B_p\left(\frac{{s}_{n}^2}{m_2}\frac{(a+2)^2}{2}\right)^{p/2}.
	$$
	Hence, for $\kappa\in\natural$, we have 
	$\kappa+1 = \lceil\kappa+1\rceil$, and for $\kappa$ a non-integer
	we have 
	$
	a+2 = \lfloor\kappa\rfloor+2=\lceil\kappa\rceil+1=\lceil\kappa+1\rceil
	$. 
\end{proof}

\subsection{Kahane-Khintchine-type Inequalities}

Kahane extended Khintchine's inequality from the real
line to normed spaces \cite{KAHANE1964,LATALA1994}.   
The optimal value for the constant
$C_{p,p'}$ in Theorem~\ref{thm:Kahane} below 
is not known in the
case of interest for this article, $p>p'=2$; 
however, it has been conjectured
to be the same as in the real case, and as we see from the 
simulations and real data experiments, this conjecture seems 
to hold for our purposes.
In what follows, let 
$\mathcal{X}$ be a normed space with norm $\norm{\cdot}$.
Those spaces of statistical interest include $\real^d$, $L^2(0,1)$, and
spaces of matrices and positive definite trace class 
operators---i.e. covariance operators.

\begin{thm}[Kahane-Khintchine Inequality (1964)]
	\label{thm:Kahane}
	For any $p,p'\in[1,\infty)$,  there exists a universal finite 
	constant $C_{p,p'}>0$ such that for 
	any sequence of $X_1,\ldots,X_n\in\mathcal{X}$
	$$
	\left\{\xv\norm*{\sum_{i=1}^n \veps_i X_i}^p\right\}^{1/p}
	\le C_{p,p'} 
	\left\{\xv\norm*{\sum_{i=1}^n \veps_i X_i}^{p'}\right\}^{1/p'}
	$$
	where $\veps_i$ are iid Rademacher random variables.
\end{thm}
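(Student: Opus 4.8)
The plan is to reduce the two-sided comparison of $L^p$-moments to a single ``reverse H\"older'' inequality and then to obtain that inequality from a deviation bound. Write $S = \sum_{i=1}^n \veps_i X_i$, viewed as a random variable on the uniform probability space $\{-1,+1\}^n$, and put $\norm{S}_r = (\xv\norm{S}^r)^{1/r}$. Because the underlying measure has total mass one, Lyapunov's inequality gives $\norm{S}_r \le \norm{S}_s$ for $r \le s$, so the claimed inequality is trivial with constant $1$ when $p \le p'$; and when $p > p'$, H\"older's inequality gives $\norm{S}_{p'} \ge \norm{S}_{1}$. It therefore suffices to show that for every $p \ge 1$ there is a constant $C_p$ depending only on $p$ with $\norm{S}_p \le C_p \norm{S}_1$, after which one takes $C_{p,p'} = \max\{1,C_p\}$.

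The real work is a deviation inequality for $\norm{S}$ at the scale $\mu = \xv\norm{S}$. First I would note that no single summand can be large: fixing every sign except $\veps_j$ and writing $A = \sum_{i\ne j}\veps_i X_i$, the triangle inequality $\norm{A+X_j}+\norm{A-X_j}\ge 2\norm{X_j}$ integrates to $\mu \ge \norm{X_j}$, so $b := \max_i \norm{X_i} \le \mu$. Then I would invoke the Hoffmann--J\o rgensen inequality for sums of independent symmetric random vectors (equivalently, Kahane's original symmetrization/splitting argument): for all $s,t>0$, $\prob{\norm{S} > 2t+s} \le 4\,\prob{\norm{S} > t}^{2} + \prob{\max_i\norm{X_i} > s}$. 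Choosing $s = \mu$ kills the last term, leaving $\prob{\norm{S} > 2t+\mu} \le 4\,\prob{\norm{S}>t}^{2}$ for every $t$. Starting from $t_0 = 8\mu$, where Markov's inequality gives $\prob{\norm{S}>t_0}\le 1/8$, and iterating along $t_{k+1} = 2t_k + \mu$ yields $4\,\prob{\norm{S}>t_k} \le (4\,\prob{\norm{S}>t_0})^{2^k} \le 2^{-2^k}$; since $t_k \asymp 2^k\mu$ and the tail is nonincreasing, this produces an exponential tail estimate $\prob{\norm{S} > t} \le C\exp(-ct/\mu)$ with absolute constants $C,c>0$, valid for all $t$.

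The remainder is routine integration: $\xv\norm{S}^p = p\int_0^\infty t^{p-1}\prob{\norm{S}>t}\,\mathrm{d}t \le C\,\Gamma(p+1)\,c^{-p}\mu^p$, so $\norm{S}_p \le (C\,\Gamma(p+1))^{1/p}c^{-1}\mu = C_p\norm{S}_1 \le C_p\norm{S}_{p'}$, as required. The main obstacle is precisely the deviation inequality, and in particular making the tail bound hold at the scale $\mu$ with constants free of $n$ and of $\sum_i\norm{X_i}$: a crude bounded-differences estimate would bring in $\sum_i\norm{X_i}^2$, which is not comparable to $\mu^2$ in a general Banach space (think of $\ell^\infty$), and it is the self-improving Hoffmann--J\o rgensen iteration that removes this dependence. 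This route yields a legitimate but far-from-optimal $C_{p,p'}$ (growing roughly like $p$); the genuinely sub-Gaussian refinement, conjectured to give the same optimal constant as in the scalar Khintchine inequality, can be approached via Talagrand's convex-distance inequality applied to the convex $1$-Lipschitz map $\veps \mapsto \norm*{\sum_i\veps_i X_i}$, but the optimal value for $p>p'=2$ is, as noted above, still unknown.
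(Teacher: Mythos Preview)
The paper does not prove this theorem; it is stated as a classical result attributed to Kahane (1964), with citations, and is used as a black box in the proofs of the paper's own contributions (notably Theorem~\ref{thm:kahaneWDep}). There is therefore no ``paper's proof'' to compare against.

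Your argument is correct and is one of the standard routes to Kahane--Khintchine. The key observations are all sound: the reduction to $\norm{S}_p\le C_p\norm{S}_1$ via Lyapunov/H\"older; the deterministic bound $\max_i\norm{X_i}\le\mu$, which is special to Rademacher sums and is precisely what lets you kill the second term in Hoffmann--J\o rgensen; and the self-improving iteration $\prob{\norm{S}>2t+\mu}\le 4\,\prob{\norm{S}>t}^2$ starting from a Markov seed, yielding an exponential tail at scale $\mu$ with constants independent of $n$ and of $\sum_i\norm{X_i}$. You are also right that a naive bounded-differences approach would introduce $\sum_i\norm{X_i}^2$, which is not controlled by $\mu^2$ in a general Banach space. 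The only cost, which you acknowledge, is that this route gives $C_p$ of order $p$ rather than the conjectured-optimal $\sqrt{p}$; the paper discusses that optimal-constant question separately (Section~A.2.1) and, for its applications, simply assumes $C_p\sim\sqrt{p}$ in line with the scalar Khintchine constants.
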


In general, we will consider the right hand side with $p'=2$, which
bounds the $p$th moments by the second moment.
For statistical applications, we are interested in a few specific 
setting for this theorem.  Namely, if $\mathcal{X}=\real^d$ for
$d\ge2$, then for the $\ell^q$ norm with $q\in[1,\infty]$, we have
$$
\left\{\xv\norm*{\sum_{i=1}^n \veps_i X_i}_{\ell^q}^p\right\}^{1/p}
\le C_{p} 
\norm*{ \left(\sum_{i=1}^n X_i\TT{X}_i\right)^{1/2} }_{S^q}
= C_{p} 
(n-1)^{1/2}\norm*{ \hat{\Sigma}^{1/2} }_{S^q}
$$
where $\norm{\cdot}_{S^q}$ is the $q$-Schatten norm and $\hat{\Sigma}$
is the empirical covariance estimator for the $X_i$.
Similarly, in the functional data setting, 
if $X_i$ are continuous and in $L^q[0,1]$, then the right hand
side becomes $C_p(n-1)^{1/2}\norm{\hat{\Sigma}(s,s')^{1/2}}_{S^q}$
where $\hat{\Sigma}:[0,1]^2\rightarrow\real$ is the empirical 
covariance operator.

For non-commutative Banach spaces \citep{PISIERXU2003}, such as when $X_i$ are real 
valued matrices, we have a slightly different bound.  Let 
$\mathcal{X}=\real^{d\times d'}$.  Then, with respect to the 
$q$-Schatten norm,
$$
\left\{\xv\norm*{\sum_{i=1}^n \veps_i X_i}_{S^q}^p\right\}^{1/p}
\le C_{p} 
\max\left\{
\norm*{ \left(\sum_{i=1}^n X_i\TT{X}_i\right)^{1/2} }_{S^q},
\norm*{ \left(\sum_{i=1}^n \TT{X}_i{X_i}\right)^{1/2} }_{S^q}
\right\}.
$$

The above results all have iid $\veps_i$.  Applying
similar methods as in \cite{SPEKTOR2016} and as in the 
previous section, we can consider the moment bounds under 
weak dependency conditions on the $\veps_i$.
This theorem is stated for balanced samples with
adjustments for imbalanced samples omitted as they 
follow exactly as in the previously discussed 
real valued setting.

\begin{thm}[Kahane-Khintchine with Weak Dependence]
	\label{thm:kahaneWDep}
	Let $\veps_i$ are Rademacher random variables such that
	$\sum \veps_i=0$.  Furthermore, let $p\in[1,\infty)$.
	
	For commutative Banach spaces there exists a 
	universal finite constant $C_{p}>0$ such that for 
	any sequence of $X_1,\ldots,X_n\in\mathcal{X}$
	$$
	\left\{\xv\norm*{\sum_{i=1}^n \veps_i X_i}^p\right\}^{1/p}
	\le C_{p}2^{1/2} 
	\norm*{\left(\sum_{i=1}^n X_iX_i^\star\right)^{1/2}}.
	$$
	
	For non-commutative Banach spaces 
	there exists a universal finite 
	constant $C_{p}>0$ such that for 
	any sequence of $X_1,\ldots,X_n\in\mathcal{X}$
	$$
	\left\{\xv\norm*{\sum_{i=1}^n \veps_i X_i}^p\right\}^{1/p}
	\le C_{p}2^{1/2}\max\left\{
	\norm*{\left(\sum_{i=1}^n X_iX_i^\star\right)^{1/2}},
	\norm*{\left(\sum_{i=1}^n X_i^\star X_i\right)^{1/2}}
	\right\} .
	$$
\end{thm}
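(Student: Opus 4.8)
The plan is to reduce the weakly dependent Kahane--Khintchine bound to the classical independent-Rademacher inequality of Theorem~\ref{thm:Kahane} by a symmetrization carried out inside the symmetric group, in the spirit of \cite{SPEKTOR2016}. Write $m = n/2$. A sign vector $\veps = (\veps_1,\ldots,\veps_n)$ with $\sum_i \veps_i = 0$ is uniformly distributed over the vectors having exactly $m$ positive coordinates, so it can be generated from a uniform random permutation $\pi \in \mathbb{S}_n$ by declaring $\veps_{\pi(i)} = +1$ for $i \le m$ and $\veps_{\pi(i)} = -1$ for $i > m$; this gives the representation $\sum_{i=1}^n \veps_i X_i = \sum_{i=1}^m ( X_{\pi(i)} - X_{\pi(m+i)} )$, since $\{\pi(1),\ldots,\pi(m)\}$ is a uniform random $m$-subset. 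The first step is to observe that right-multiplying $\pi$ by the transposition $(i\ \ m+i)$ merely flips the sign of the $i$-th summand $X_{\pi(i)} - X_{\pi(m+i)}$, and that the uniform law on $\mathbb{S}_n$ is invariant under right-multiplication by the order-$2^m$ subgroup $H$ generated by these $m$ disjoint transpositions. Averaging the right-translate $\pi h$ over $h$ uniform in $H$ therefore injects free iid Rademacher signs $\delta_1,\ldots,\delta_m$ at no cost, giving
\[
\xv \norm*{ \sum_{i=1}^n \veps_i X_i }^p \;=\; \xv_\pi\, \xv_\delta \norm*{ \sum_{i=1}^m \delta_i \bigl( X_{\pi(i)} - X_{\pi(m+i)} \bigr) }^p .
\]

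Conditionally on $\pi$, the elements $Y_i := X_{\pi(i)} - X_{\pi(m+i)}$ are fixed and the $\delta_i$ are genuine iid Rademacher variables, so the commutative (resp.\ non-commutative) $L^q$ form of the classical Kahane--Khintchine inequality recorded after Theorem~\ref{thm:Kahane}---applied with the second moment on the right, which is legitimate for every $p \ge 1$---bounds $\xv_\delta \norm{\sum_i \delta_i Y_i}^p$ by $C_p^p$ times the $p$-th power of the square-function quantity $\norm{(\sum_i Y_i Y_i^\star)^{1/2}}$ (and of the maximum of this and $\norm{(\sum_i Y_i^\star Y_i)^{1/2}}$ in the non-commutative case). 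It then remains to dominate the square function of the differences by that of the data. Here I would invoke the operator inequality $(u-v)(u-v)^\star \preceq 2\,uu^\star + 2\,vv^\star$, immediate from $(u-v)(u-v)^\star + (u+v)(u+v)^\star = 2uu^\star + 2vv^\star$, summed over $i$ and combined with $\{\pi(1),\ldots,\pi(m)\} \cup \{\pi(m+1),\ldots,\pi(2m)\} = \{1,\ldots,n\}$, to obtain $\sum_{i=1}^m Y_i Y_i^\star \preceq 2 \sum_{j=1}^n X_j X_j^\star$, and symmetrically for the adjoint products. Crucially the right-hand side no longer depends on $\pi$.

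To finish, operator monotonicity of $t \mapsto t^{1/2}$ together with the monotonicity of Schatten (and $L^q$) norms on the positive cone gives $\norm*{(\sum_i Y_i Y_i^\star)^{1/2}} \le 2^{1/2}\norm*{(\sum_j X_j X_j^\star)^{1/2}}$, which is precisely the extra factor $2^{1/2}$ in the statement; since this bound is uniform in $\pi$ the outer $\xv_\pi$ is harmless, and taking $p$-th roots yields the claim in both the commutative and non-commutative cases. I expect the one place requiring genuine care to be the symmetrization step---checking that the permutation-plus-sign construction really reproduces the uniform distribution on the balanced slice of the hypercube---rather than the square-function estimate, which is routine once the cross terms are absorbed. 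The imbalanced analogue follows the same template, combining the weighted-Rademacher decomposition and Lemma~\ref{lem:convexity} exactly as in the proof of Theorem~\ref{thm:unbalancedKhintchine}; it is omitted since no new idea is needed.
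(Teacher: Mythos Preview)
Your proposal is correct and follows essentially the same approach as the paper's proof. The only cosmetic differences are that you perform the symmetrization in one shot by averaging over the abelian subgroup $H\cong\{\pm1\}^m$ generated by the transpositions $(i\ m{+}i)$, whereas the paper introduces the independent Rademachers $\delta_i$ one at a time via the symmetry of each $B_{k,\pi}=X_{\pi(k)}-X_{\pi(k+m)}$; and you pass from $\sum Y_iY_i^\star \preceq 2\sum X_jX_j^\star$ to the norm bound via operator monotonicity of $t\mapsto t^{1/2}$, whereas the paper uses the identity $\norm{A^{1/2}}_{S^q}=\norm{A}_{S^{q/2}}^{1/2}$ together with Schatten-norm monotonicity on the positive cone.
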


Before proving this theorem, we discuss some preliminary 
results regarding Schatten norms.  Let  
$\preceq$ denote positive semi-definite ordering.
For positive semi-definite
$q$-Schatten class linear operators $\Gamma$ and $\Delta$
with $0\preceq\Gamma\preceq\Delta$, 
$$
\norm{\Gamma}_{S^q} \le \norm{ \Delta }_{S^q},
\text{ and } 
\norm{ (\Gamma\Gamma^\star)^{1/2} }_{S^q} = 
\norm{ \Gamma\Gamma^\star }_{S^{q/2}}^{1/2}
$$
where the square root is well defined as $\Gamma\Gamma^\star$
is symmetric positive semi-definite.  Lastly, 
via direct calculation,
\begin{align*}
(\Gamma-\Delta)(\Gamma-\Delta)^\star &\preceq
2(\Gamma\Gamma^\star - \Delta\Delta^\star) \\
(\Gamma-\Delta)^\star(\Gamma-\Delta) &\preceq
2(\Gamma^\star\Gamma - \Delta^\star\Delta).
\end{align*}

\begin{proof}
	For $\mathbb{S}_n$ the symmetric group on $n$ elements, let 
	$f:\mathbb{S}_n\rightarrow\real$ by
	$$
	f(\pi) := \norm*{ 
		\sum_{i=1}^m X_{\pi(i)} - \sum_{i=m+1}^{2m}X_{\pi(i)} 
	}
	$$
	For $k = 1,\ldots,m$, we define $B_{k,\pi} = X_{\pi(k)}-X_{\pi(k+m)}$
	and $H_{k,\pi} = \sum_{i=k+1}^{m} B_{i,\pi} = 
	\sum_{i=k+1}^mX_{\pi(i)} - \sum_{i=m+k+1}^{2m}X_{\pi(i)}$ where
	$H_{m,\pi}=0$ being an empty sum.
	
	Note that the $B_{k,\pi}$ are symmetric random variables for 
	$\pi$ uniform on $\mathbb{S}_n$.  Thus, 
	$\xv_\pi\norm{f(\pi)} = \xv_\pi\norm{B_{1,\pi}+H_{1,\pi}} =
	\xv_\pi\norm{-B_{1,\pi}+H_{1,\pi}}
	$ and furthermore, letting $\delta_1,\ldots,\delta_m$ be iid
	Rademacher random variables,
	\begin{align*}
	\xv_\pi\norm{f(\pi)}^p 
	&= \xv_\pi\xv_{\delta_1}\norm{ \delta_1B_{1,\pi} + H_{1,\pi} }\\
	&= \xv_\pi\xv_{\delta_1}\xv_{\delta_2}\norm{ 
		\delta_1B_{1,\pi} + \delta_2B_{2,\pi} + H_{2,\pi} 
	}\\
	&= \xv_\pi\xv_{\delta_1}\ldots\xv_{\delta_m}\norm*{ 
		\sum_{i=1}^m \delta_iB_{i,\pi}  
	}
	\end{align*}
	From here, we consider separately the commutative and 
	non-commutative settings.
	
	For the commutative setting, we apply the facts about Schatten
	norms preceding this proof.
	Beginning with the classic Kahane-Khintchine inequality from above
	with $p'=2$, we have 
	$$
	\xv_\pi\norm{f(\pi)}_q^p \le
	C_p
	\norm*{ \left(\sum_{i=1}^m B_{i,\pi}{B_{i,\pi}^\star}\right)^{1/2}}_{S^q}^{p}.
	$$
	Noting that 
	$
	B_{i,\pi}{B_{i,\pi}^\star} \le 2( X_{\pi(k)}{X}_{\pi(k)}^\star + 
	X_{\pi(k+m)}{X}_{\pi(k+m)}^\star ),
	$
	\begin{multline*}
	\norm*{ \left(\sum_{i=1}^m B_{i,\pi}{B_{i,\pi}^\star}\right)^{1/2}}_{S^q}^{p}
	= 
	\norm*{ \sum_{i=1}^m B_{i,\pi}{B_{i,\pi}^\star}}_{S^{q/2}}^{p/2}\\
	\le 
	2^{p/2}\norm*{ \sum_{i=1}^{2m} X_{i}{X_{i}^\star}}_{S^{q/2}}^{p/2}
	=
	2^{p/2}\norm*{ \left(\sum_{i=1}^{2m} X_{i}{X_{i}^\star}\right)^{1/2}}_{S^{q}}^{p}
	\end{multline*}
	
	For the non-commutative setting, we proceed as 
	before using the non-commutative
	variant of Kahane-Khintchine and also noting that
	$
	B_{i,\pi}^\star{B_{i,\pi}} \le 2( X_{\pi(k)}^\star{X}_{\pi(k)} + 
	X_{\pi(k+m)}^\star{X}_{\pi(k+m)} ).
	$
	\begin{align*}
	\xv_\pi\norm{f(\pi)}_q^p 
	&\le
	C_p
	\max\left\{
	\norm*{ 
		\left(\sum_{i=1}^m B_{i,\pi} B_{i,\pi}^\star\right)^{1/2} 
	}_{S^q}^p,
	\norm*{ 
		\left(\sum_{i=1}^m B_{i,\pi}^\star B_{i,\pi}\right)^{1/2} 
	}_{S^q}^p
	\right\}\\
	&=
	C_p
	\max\left\{
	\norm*{ 
		\sum_{i=1}^m B_{i,\pi} B_{i,\pi}^\star
	}_{S^q}^{p/2},
	\norm*{ 
		\sum_{i=1}^m B_{i,\pi}^\star B_{i,\pi}
	}_{S^q}^{p/2}
	\right\}\\
	&\le
	C_p2^{p/2}
	\max\left\{
	\norm*{ 
		\sum_{i=1}^{2m} X_{i,\pi} X_{i,\pi}^\star 
	}_{S^q}^{p/2},
	\norm*{ 
		\sum_{i=1}^{2m} X_{i,\pi}^\star X_{i,\pi}
	}_{S^q}^{p/2}
	\right\}\\
	&=
	C_p2^{p/2}
	\max\left\{
	\norm*{ 
		\left(\sum_{i=1}^{2m} X_{i,\pi} X_{i,\pi}^\star\right)^{1/2} 
	}_{S^q}^{p},
	\norm*{ 
		\left(\sum_{i=1}^{2m} X_{i,\pi}^\star X_{i,\pi}\right)^{1/2}
	}_{S^q}^{p}
	\right\}
	\end{align*}
\end{proof}

\subsubsection{On Optimal Constants}

For the classic Khintchine inequality, the optimal constants
due to \cite{HAAGERUP1981} coincide with the lower bound 
imposed by the central limit theorem.  That is, 
Khintchine's inequality states that 
$
\xv\abs*{ \sum x_i\veps_i }^p \le 
B_p\norm*{x}_2^{p} 
$
where 
$$
B_p = 2^{p/2}\Gamma\left\{(p+1)/2\right\}/\sqrt{\pi}.
$$
This coincides precisely with the $p$th absolute 
moment of a standard normal random 
variable---i.e. $\xv \abs{Z}^p = B_p$ for $Z\dist\distNormal{0}{1}$.

For the Kahane-Khintchine inequality, optimal constants are 
not currently known.\footnote{For the lower bound, optimal 
	constants are known due to \cite{LATALA1994}.}  
However, it is strongly conjectured 
that they coincide with those in the standard 
Khintchine inequality.
Moreover in the multivariate setting, 
due again to the central limit theorem, the 
optimal constant has a lower bound.  Indeed,
let $Z\dist\distNormal{0}{\Sigma}$, then
$$
{\xv\norm{Z}_{\ell^q}^q}\norm{\Sigma^{1/2}}_{S^q}^{-1}
=2^{p/2}\Gamma\left\{(p+1)/2\right\}/\sqrt{\pi}.
$$
This can be extended into a functional data setting
using the fact that the space of covariance operators 
arises from the closure of the set of finite rank operators---i.e.
the multivariate setting.

\subsection{Sub-Gaussian Concentration}

Given upper bounded on the $p$th moments of a random 
permutation statistic, we want to quantify the concentration
behaviour.  In particular, we want as sharp an upper bound 
as possible to achieve the best statistical power for 
hypothesis testing.

We first consider the standard moment bounds to achieve 
sub-Gaussian concentration \citep{BOUCHERON2013} in 
Proposition~\ref{prop:subGauss}.
This is improved if $X$ is symmetric \citep{GARLING2007}
in Proposition~\ref{prop:subGaussSym}.
Lastly, even if the moment condition is weakened as
in Proposition~\ref{prop:subGaussSymWeak}, we still have
sub-Gaussian concentration.
\begin{prop}
	\label{prop:subGauss}
	For a centred univariate random variable $X\in\real$ such that  
	$
	\xv{\abs{X}}^{2p} \le p!C^{p}
	$
	for some constant $C>0$.
	Then, 
	$$
	\prob{ X>t } \le \ee^{-t^2/8C}.
	$$
\end{prop}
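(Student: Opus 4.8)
The plan is to split into two regimes of $t$: a ``central'' range handled by bounding the moment generating function of $X$, and a ``tail'' range handled by a direct moment (Markov) estimate, arranged so that the two ranges overlap and together cover all $t\ge 0$.

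First I would establish an exponential moment bound. Using the elementary inequality $\ee^{x}\le x+\ee^{x^2}$, valid for all real $x$ (it follows from checking that $x+\ee^{x^2}-\ee^{x}$ has nonnegative second derivative, hence attains its minimum, namely $0$, at $x=0$), together with $\xv X=0$, one obtains
\begin{equation*}
\xv \ee^{\lambda X}\le \lambda\xv X+\xv\ee^{\lambda^2X^2}=\sum_{p\ge0}\frac{\lambda^{2p}\xv X^{2p}}{p!}\le\sum_{p\ge0}(\lambda^2 C)^p=\frac{1}{1-\lambda^2 C}
\end{equation*}
for $\lambda^2C<1$, where the middle inequality uses $\xv X^{2p}\le\xv\abs{X}^{2p}\le p!C^p$. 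Since $1/(1-u)\le \ee^{2u}$ on $[0,1/2]$, this yields the sub-Gaussian bound $\xv\ee^{\lambda X}\le\ee^{2\lambda^2 C}$ whenever $0\le\lambda\le1/\sqrt{2C}$. Applying Markov's inequality to $\ee^{\lambda X}$ with $\lambda=t/(4C)$, which is admissible precisely when $t^2\le 8C$, gives
\begin{equation*}
\prob{X>t}\le \ee^{-\lambda t}\,\xv\ee^{\lambda X}\le \ee^{-t^2/(4C)+t^2/(8C)}=\ee^{-t^2/(8C)}.
\end{equation*}

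For the tail range $t^2>8C$, I would instead bound $\prob{X>t}\le\prob{\abs{X}\ge t}$ by Markov applied to $\abs{X}^{2p}$: for every integer $p\ge1$,
\begin{equation*}
\prob{X>t}\le\frac{\xv\abs{X}^{2p}}{t^{2p}}\le\frac{p!\,C^p}{t^{2p}}\le\left(\frac{pC}{t^2}\right)^{p},
\end{equation*}
using $p!\le p^p$. Choosing $p=\lfloor t^2/(eC)\rfloor$, which is at least $1$ since $t^2>8C>eC$, forces $pC/t^2\le 1/e$, so this is at most $\ee^{-p}\le \ee^{\,1-t^2/(eC)}$; and $\ee^{\,1-t^2/(eC)}\le \ee^{-t^2/(8C)}$ as soon as $1\le (t^2/C)(1/e-1/8)$, which holds for all $t^2\ge 8C$. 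Since $\{t^2\le 8C\}\cup\{t^2>8C\}=[0,\infty)$, the two cases together give the claim.

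The hard part will not be any single estimate but the bookkeeping of constants: the exponential moment bound is available only for $\lambda$ below the radius $1/\sqrt{C}$, so one cannot optimise $\lambda$ freely over $(0,\infty)$ and must patch the large-$t$ range with the moment method, and the crossover threshold $t^2=8C$ must be chosen so that both arguments remain valid there. Proving $\ee^x\le x+\ee^{x^2}$ globally is elementary but is the one place a short self-contained calculus lemma is needed.
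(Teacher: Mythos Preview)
Your argument is correct. The paper does not write out a proof of this proposition, citing it as standard, but the remark immediately following Proposition~A.9 makes clear what the intended route is: symmetrise by an independent copy $X'$, so that $X-X'$ is symmetric with $\xv\abs{X-X'}^{2p}\le 2^{2p}\xv\abs{X}^{2p}\le p!(4C)^p$, and then feed this into the symmetric case (Proposition~A.9, whose proof mimics that of A.10). Because $\xv\ee^{\lambda X}\le\xv\ee^{\lambda(X-X')}$ by Jensen and the symmetric MGF bound $\xv\ee^{\lambda(X-X')}\le\ee^{2\lambda^2C}$ holds for \emph{all} $\lambda$, Chernoff can be optimised globally and gives $\ee^{-t^2/8C}$ directly, with no case split.

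Your route avoids the auxiliary copy entirely by using $\ee^{x}\le x+\ee^{x^2}$ to pass to $\xv\ee^{\lambda^2X^2}$, at the price that the resulting MGF bound is only valid for $\lambda^2C\le 1/2$; this forces the two-regime decomposition and the separate Markov-on-moments estimate for $t^2>8C$. Both approaches land on the same constant $8$, but the symmetrisation argument is shorter and explains transparently why the asymmetric constant is exactly four times the symmetric one. One small presentational point: convexity of $x+\ee^{x^2}-\ee^{x}$ does not by itself locate the minimum at $0$; you also need $f'(0)=0$, which is true but should be stated.
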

\begin{prop}
	\label{prop:subGaussSym}
	For a centred symmetric univariate random variable 
	$X\in\real$ such that  
	$
	\xv{\abs{X}}^{2p} \le p!C^{p}
	$
	for some constant $C>0$.
	Then, 
	$$
	\prob{ X>t } \le \ee^{-t^2/2C}.
	$$
\end{prop}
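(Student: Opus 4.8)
The plan is to bound the moment generating function of $X$ directly and then apply a Chernoff argument; the payoff of symmetry is that all odd moments of $X$ vanish, so none of the centering or extra symmetrization needed for the non-symmetric Proposition~\ref{prop:subGauss} is required here. First I would write, for $\lambda\ge0$,
$$
\xv \ee^{\lambda X} = \sum_{k=0}^\infty \frac{\lambda^k}{k!}\xv X^k = \sum_{p=0}^\infty \frac{\lambda^{2p}}{(2p)!}\xv X^{2p},
$$
where the odd-indexed terms drop out because $X$ is symmetric. Inserting the hypothesis $\xv X^{2p}\le\xv\abs{X}^{2p}\le p!\,C^p$ gives
$$
\xv \ee^{\lambda X} \le \sum_{p=0}^\infty \frac{p!\,C^p}{(2p)!}\,\lambda^{2p}.
$$

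Next I would control the ratio $p!/(2p)!$ using the elementary bound $(2p)!\ge 2^p (p!)^2$, equivalently $\binom{2p}{p}\ge 2^p$, which follows by a one-line induction since $\binom{2p+2}{p+1} = \tfrac{2(2p+1)}{p+1}\binom{2p}{p}\ge 2\binom{2p}{p}$. This yields $p!/(2p)!\le 1/(2^p p!)$, hence
$$
\xv \ee^{\lambda X} \le \sum_{p=0}^\infty \frac{(\lambda^2 C/2)^p}{p!} = \ee^{\lambda^2 C/2},
$$
so $X$ is sub-Gaussian with variance proxy $C$.

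Finally, the Chernoff bound gives $\prob{X>t}\le \ee^{-\lambda t}\,\xv \ee^{\lambda X}\le \ee^{-\lambda t+\lambda^2 C/2}$ for every $\lambda\ge0$, and optimizing at $\lambda=t/C$ produces $\prob{X>t}\le \ee^{-t^2/2C}$, as claimed. There is no genuinely hard step: the only point requiring any care is the combinatorial inequality $(2p)!\ge 2^p(p!)^2$ that converts the moment bound into the Gaussian series, and it is worth noting that it is exactly the vanishing of the odd moments that removes the factor-of-four loss incurred in Proposition~\ref{prop:subGauss}.
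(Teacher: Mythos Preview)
Your proof is correct and follows exactly the approach the paper uses: the paper does not spell out a separate proof for Proposition~\ref{prop:subGaussSym} (it is cited as a standard sharpening from \cite{GARLING2007}), but the proof it gives for the neighbouring Proposition~\ref{prop:subGaussSymWeak} is the same moment-generating-function expansion with odd terms dropped by symmetry followed by Chernoff optimisation. The only extra ingredient you need beyond that template is the elementary bound $(2p)!\ge 2^p(p!)^2$, which you supply and justify correctly.
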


\begin{remark}
	Note that the difference between the above two propositions is a factor of 
	4 in the denominator of the exponent.  This stems from a standard 
	symmetrization trick where one considers $X$ and $X'$, an iid copy of $X$,
	so that 
	$$
	\xv\abs{ X - X' }^{2p} \le 2^{2p} \xv\abs{X}^{2p} \le (4C)^{p}p!.
	$$
	Thus, the following results can be similarly adjusted for asymmetric 
	random variables.
\end{remark}

\begin{prop}
	\label{prop:subGaussSymWeak}
	For a centred symmetric univariate random variable 
	$X\in\real$ such that  
	$
	\xv{\abs{X}}^{2p} \le (2p)!C^{p}/p!
	$
	for some constant $C>0$.
	Then, 
	$$
	\prob{ X>t } \le \ee^{-t^2/4C}.
	$$
\end{prop}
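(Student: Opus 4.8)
The plan is to run a one-sided Chernoff bound, using the symmetry of $X$ to tame its moment generating function. First I would note that symmetry forces every odd moment of $X$ to vanish, so that for any $\lambda\in\real$
\[
\xv\,\ee^{\lambda X}=\sum_{p=0}^{\infty}\frac{\lambda^{2p}}{(2p)!}\,\xv\,X^{2p}.
\]
The interchange of $\xv$ and $\sum$ is legitimate by Tonelli's theorem, since each term $\lambda^{2p}X^{2p}/(2p)!$ is nonnegative; this is the one spot where a little care is required, and it is vindicated once the series is shown to converge.

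Next I would substitute the hypothesis $\xv\,X^{2p}=\xv\,\abs{X}^{2p}\le (2p)!\,C^{p}/p!$ (which also holds trivially at $p=0$), whereupon the factorials cancel and
\[
\xv\,\ee^{\lambda X}\le\sum_{p=0}^{\infty}\frac{\lambda^{2p}C^{p}}{p!}=\ee^{C\lambda^{2}}.
\]
In particular the exponential series converges for every $\lambda$, which retroactively justifies the previous display. Finally, for $t>0$ and $\lambda>0$, Markov's inequality applied to $\ee^{\lambda X}$ gives $\prob{X>t}\le \ee^{-\lambda t}\,\xv\,\ee^{\lambda X}\le \ee^{-\lambda t+C\lambda^{2}}$, and optimizing over $\lambda$ (the minimum is at $\lambda=t/(2C)$) yields $\prob{X>t}\le \ee^{-t^{2}/4C}$, as claimed.

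The argument is essentially routine, so I do not anticipate a genuine obstacle; the only step worth flagging as delicate is the termwise expectation of the exponential series, handled by nonnegativity of even powers together with the a posteriori convergence. It is also worth recording why the constant here is weaker than in Proposition~\ref{prop:subGaussSym}: the present hypothesis produces the bound $\xv\,\ee^{\lambda X}\le \ee^{C\lambda^{2}}$ in place of the $\ee^{C\lambda^{2}/2}$ one extracts from the stronger moment condition $\xv\,\abs{X}^{2p}\le p!C^{p}$, so the effective variance proxy doubles and the exponent loses a factor of two in its denominator. Alternatively, one could bypass the moment generating function entirely, bounding $\prob{X>t}\le t^{-2p}\,\xv\,X^{2p}$ directly and optimizing over $p$, but the Chernoff route is cleaner and loses nothing.
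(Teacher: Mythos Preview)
Your proof is correct and follows essentially the same route as the paper: expand the moment generating function, use symmetry to kill the odd terms, insert the moment hypothesis to get $\xv\,\ee^{\lambda X}\le \ee^{C\lambda^2}$, and finish with the Chernoff bound optimized at $\lambda=t/(2C)$. The paper's version is terser (it omits the Tonelli justification and the comparison with the stronger proposition), but the argument is identical.
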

\begin{proof}
	The moment generating function is 
	$$
	\xv \ee^{\lmb Z} 
	=   \sum_{p=0}^\infty \frac{\lmb^p\xv Z^p}{p!}
	=   \sum_{p=0}^\infty \frac{\lmb^{2p}\xv Z^{2p}}{(2p)!}
	\le \sum_{p=0}^\infty \frac{\lmb^{2p}C^p}{p!}
	\le \ee^{ \lmb^2C }.
	$$
	The result follows from Markov's (Chernoff's) Inequality.
\end{proof}

\section{Proofs of main theorems}
\label{app:mainProofs}
Now that all of the results from the previous section 
have been established, we prove the tail bounds on the
test statistics of interest by (1) applying the 
appropriate Khintchine-type moment bound and (2)
applying the appropriate sub-Gaussian bound on the
moment generating function.

\begin{proof}[Proof of Theorem~2.1] 
	For the balanced case of $\kappa=1$,
	let $n=2m$ and $\veps_1,\ldots,\veps_n$ be Rademacher random variables
	such that $\sum_{i=1}^n \veps_i = 0$---i.e. not independent.  
	Then, we can 
	rewrite $T(\pi)$ from equation~2.1 as 
	$$
	T(\pi) = \frac{1}{s{m}}\sum_{i=1}^n \veps_i X_i.
	$$
	Treating $X_i\in\real$ as fixed, we can use
	Theorem~\ref{thm:conditionalKhintchine}
	to bound the $p$th
	absolute moment of $T(\pi)$ for $\pi$ uniformly distributed
	on $\mathbb{S}_n$,
	$$
	\xv_\veps\abs*{T(\pi)}^p =
	\left(\frac{1}{s{m}}\right)^{p}\xv_\veps\abs*{\sum \veps_i X_i}^p
	\le B_p\left(\frac{ 
		\norm{X}_2^2-n\bar{X}^2}{
		s^2m^2
	}
	\right)^{p/2}.
	$$
	However, the term $\norm{X}_2^2-n\bar{X}^2 = (n-1){s}^2 < 2ms^2$.
	Hence, the result of \cite{SPEKTOR2016} can be equivalently 
	rewritten as 
	$$
	\xv\abs*{
		T(\pi)
	}^{2p}
	\le (2/m)^pB_{2p} = \frac{2^p(2p)!}{m^pp!}.
	$$
	Applying Proposition~\ref{prop:subGaussSymWeak} gives the
	desired result.
	
	For $\kappa>1$---i.e. the imbalanced setting---we
	apply Theorem~\ref{thm:unbalancedKhintchine} to get moment bounds
	$$
	\xv\abs*{
		T(\pi)
	}^{2p}
	\le \left(\frac{(\kappa+1)^2}{2m_2}\right)^p\frac{(2p)!}{p!}.
	$$
	and 
	Proposition~\ref{prop:subGaussSymWeak} again to get the 
	desired result.
\end{proof}

\begin{proof}[Proof of Theorem~2.2] 
	As with the previous proof, let
	$n=2m$ and $\veps_1,\ldots,\veps_n$ be Rademacher random variables
	such that $\sum_{i=1}^n \veps_i = 0$.
	Our permuted test statistic is 
	$T(\pi) = \norm*{\sum_{i=1}^n \veps_i X_i}_q$.
	We apply Theorem~\ref{thm:kahaneWDep}, our 
	Kahane-Khintchine variant assuming the above dependency on 
	$\veps$, in the commutative Banach setting to get 
	$$
	\xv T(\pi)^p \le
	C_p^p 2^{p/2} \norm*{ \left(\sum_{i=1}^n X_iX_i^*\right)^{1/2} }^p.
	$$
	Note that while the optimal constant is not known,
	$C_p \sim p^{1/2}$ from the central limit theorem and 
	from the proof in \cite{DIESTEL1995}, Chapter 11.
	Hence, applying the fact that $((2p)!/p!)^{1/2p}\sim p^{1/2}$ and
	Proposition~\ref{prop:subGaussSymWeak}.  We have the 
	desired result.
\end{proof}

\begin{proof}[Proof of Theorem~2.3] 
	This proof is identical to that for Theorem~2.2 
	except we apply the non-commutative variant of Kahane-Khintchine.
\end{proof}

\begin{proof}[Proof of Proposition~2.5] 
	We note first that $nT_0^2/(2+\kappa+\kappa^{-1})$ is 
	approximately $\distChiSquared{1}$ via the central
	limit theorem.  Hence, for $Z\dist\distChiSquared{1}$,
	some $c>0$,
	and some $u\in(0,1)$,
	\begin{align*}
	\prob{ \ee^{-Z/c} \le u }
	&= \prob{ Z \ge -c\log u }\\
	&= (2\pi)^{-1/2}\int_{-c\log u}^\infty 
	x^{-1/2}\ee^{-x/2}dx\\
	&= \left(\frac{c}{2\pi}\right)^{1/2}
	\int_{0}^u (-\log y)^{-1/2}y^{c/2-1}dy\\
	&\le\left(\frac{c}{2\pi}\right)^{1/2}
	\int_{0}^u (1 - y)^{1/2-1}y^{c/2-1}dy\\
	&= \frac{(c/2)^{1/2}\Gamma(c/2)}{\Gamma((c+1)/2)}
	I(u;c/2,1/2)
	\end{align*}
	where we use the inequality $-\log y \ge 1-y$ for 
	$y\in(0,1)$.
	The coefficient
	${(c/2)^{1/2}\Gamma(c/2)}{\Gamma((c+1)/2)}^{-1}\rightarrow1$
	as $c\rightarrow\infty$.  Replacing 
	$c$ with $2\lceil\kappa+1\rceil^3/(2+\kappa+\kappa^{-1})$, we 
	conclude that
	$$
	\prob{
		\exp\left\{
		-nT(\pi)^2/2\lceil\kappa+1\rceil^3
		\right\} < u
	} \le C_0I\left(
	u ; 
	\frac{\lceil\kappa+1\rceil^3}{(2+\kappa+\kappa^{-1})},
	\frac{1}{2}
	\right)
	$$
	where 
	$
	C_0 = {\left(\frac{
			\lceil\kappa+1\rceil^{3}
		}{
			2+\kappa+\kappa^{-1}
		}\right)^{1/2}
		\Gamma\left(
		\frac{\lceil\kappa+1\rceil^3}{2+\kappa+\kappa^{-1}}
		\right)}{
		\Gamma\left(
		\frac{1}{2}+\frac{\lceil\kappa+1\rceil^3}{2+\kappa+\kappa^{-1}}
		\right)^{-1}
	}
	$.
\end{proof}

\begin{proof}[Proof of Theorem~2.4] 
	Let $Z = h(\norm{X})\in\real^+$, and let 
	$B^*$ be a countable dense subset of the unit ball 
	of the dual space $\mathcal{X}^*$, which consists 
	of bounded linear functionals $\phi$.  
	Then, we can write 
	$Z = \sup_{\phi\in B^*}h(\phi(X))$ being a countable
	supremum.  Via application of 
	Talagrand's concentration inequality
	\citep{TALAGRAND1996CON},
	we have that 
	$$
	\prob{
		Z \ge \xv Z + t
	} \le \exp\left( \frac{-t^2}{a+bt} \right)
	$$
	for positive constants $a$ and $b$ depending on 
	$\xv h(\norm{X})^2$ and 
	$\sup_{X\in\mathcal{X}}h(\norm{X})$.\footnote{
		Refined values for such constants can be found in 
		other works \citep{BOUSQUET2003,KLEINRIO2005,GINENICKL2015},
		but are not pertinent to this discussion.
	}
	Noting that for $t\ge0$
	$$
	\frac{d}{dt}\left\{\frac{t}{1+bt/a}\right\}
	= \frac{1}{(1+bt/a)^2} 
	\le \frac{1}{1+bt/a} =  
	\frac{d}{dt}\left\{\frac{a}{b}\log(1+bt/a)\right\},
	$$
	we have that 
	\begin{multline*}
	\exp\left( \frac{-t^2}{a+bt} \right)
	= \exp\left\{
	-\frac{1}{b}\left(
	t - \frac{at}{a+bt}
	\right)
	\right\} 
	\\
	\le
	\exp\left\{
	-\frac{1}{b}\left(
	t - \frac{a}{b}\log(1+bt/a)
	\right)
	\right\}
	= \ee^{-t/b}\left(
	1 + \frac{b}{a}t
	\right)^{a/b^2}.
	\end{multline*}
	If $a\notin\natural$, then we 
	replace $a$ with $\lceil a\rceil$. 
	Then, we have that 
	\begin{multline*}
	\exp\left( \frac{-t^2}{a+bt} \right)
	\le \ee^{-t/b}\left[\left(
	1 + \frac{bt}{a}
	\right)^{a}\right]^{1/b^2}\\
	= \ee^{-t/b}\left[
	\sum_{k=0}^{a}  {a\choose k}\left(
	\frac{bt}{a}
	\right)^k
	\right]^{1/b^2} 
	\le \ee^{-t/b}\left[
	\sum_{k=0}^{a}  \frac{1}{k!}\left(
	bt
	\right)^k
	\right]^{1/b^2}.
	\end{multline*}
	If $b=1$, then this is just the distribution function
	of the Erlang (gamma) distribution with 
	shape parameter $a$ and scale parameter $1$.
	More generally,
	we have that 
	$$
	\left[
	\sum_{k=0}^{a}  \frac{1}{k!}\left(bt\right)^k
	\right]^{1/b^2}
	= \ee^{t/b}\left[1 - \ee^{-bt}
	\sum_{k=a+1}^{\infty}  \frac{1}{k!}\left(
	bt
	\right)^k
	\right]^{1/b^2}
	$$
	whose $l$th derivative for $b>1$, denoting the Pochhamer symbol
	$(b^{-2})_j=\prod_{i=1}^j(b^{-2}-i+1)$, 
	can be written as 
	\begin{align*}
	\frac{d^l}{dt^l}&
	\left[
	\sum_{k=0}^{a}  \frac{1}{k!}\left(bt\right)^k
	\right]^{1/b^2}\\
	&=
	\frac{\ee^{t/b}}{b^l}
	\left[1 - \ee^{-bt}
	\sum_{k=a+1}^{\infty}  \frac{1}{k!}\left(
	bt
	\right)^k
	\right]^{1/b^2}\\ 
	&~~~- \ee^{t/b}\sum_{j=1}^l b^{j-l}(b^{-2})_j
	\left[1 - \ee^{-bt}
	\sum_{k=a+1}^{\infty}  \frac{1}{k!}\left(
	bt
	\right)^k
	\right]^{1/b^2-j}
	\frac{d^{j}}{dt^{j}} \left\{\ee^{-bt}
	\sum_{k=a+1}^{\infty}  \frac{1}{k!}\left(
	bt
	\right)^k\right\}\\
	&=
	\frac{\ee^{t/b}}{b^l}
	\left[1 - \ee^{-bt}
	\sum_{k=a+1}^{\infty}  \frac{1}{k!}\left(bt\right)^k
	\right]^{1/b^2} \\
	&~~~- \ee^{t/b}\sum_{j=1}^l b^{2j-l}(b^{-2})_j
	\left[1 - \ee^{-bt}
	\sum_{k=a+1}^{\infty}  \frac{1}{k!}\left(
	bt
	\right)^k
	\right]^{1/b^2-j}
	\ee^{-bt}
	\sum_{i=0}^j 
	(-1)^{j-i} 
	\sum_{k=0\vee (a+1-i)}^{\infty}  \frac{1}{k!}\left(bt\right)^k.
	\end{align*}
	Thus, for $l\le a$, we have that 
	$$
	\left.\frac{d^l}{dt^l}
	\left[
	\sum_{k=0}^{a}  \frac{1}{k!}\left(bt\right)^k
	\right]^{1/b^2}\right|_{t=0} = \frac{1}{b^l} 
	$$
	as the second term vanishes,
	and for $l\ge a+1$ and $b>1$, we have that 
	\begin{align*}
	\left.\frac{d^l}{dt^l}
	\left[
	\sum_{k=0}^{a}  \frac{1}{k!}\left(bt\right)^k
	\right]^{1/b^2}\right|_{t=0} &= 
	\frac{1}{b^l}\left[1
	- \sum_{j=a+1}^l b^{2j}(b^{-2})_j
	\sum_{i=a+1}^j 
	(-1)^{j-i} 
	\right]\\
	&= \frac{1}{b^l}\left[1
	- \sum_{\substack{j=a+1\\j=a+1 \text{ mod } 2}}^l 
	\prod_{i=1}^{j} (1 - (i+1)b^2)
	\right].
	\end{align*}
	which is negative for odd $a$.  Thus, for $a$ odd---in
	the case where $a\in\real^+$, we replace $a$ with 
	$2\lfloor a/2 \rfloor+1$---we can finally bound via 
	$a$th order approximation
	$$
	\exp\left( \frac{-t^2}{a+bt} \right)
	\le \ee^{-t/b}\left[\left(
	1 + \frac{bt}{a}
	\right)^{a}\right]^{1/b^2}
	\le \ee^{-t/b}
	\sum_{k=0}^{a}  \frac{1}{k!}\left(
	\frac{t}{b}
	\right)^k
	= \int_{t}^\infty
	\frac{x^{a-1}\ee^{-x/b}}{b^a\Gamma(a)}dx
	$$
	being once again the Erlang (gamma) distribution function 
	with shape parameter $a$ and scale parameter $b$.
	
	As a result, we have for $u\in(0,1)$, 
	$C$ some positive constant, and $I(u;a,c/b-a)$
	the incomplete beta function where 
	$c$ is
	chosen large enough so that $c/b-a>0$,
	\begin{align*}
	\prob{ \ee^{-(Z-\xv{Z})/c} \le u  }
	&= \prob{ Z-\xv{Z}\ge -c \log u }\\
	&\le 
	\int_{-c\log u}^\infty 
	\frac{x^{a-1}\ee^{-x/b}}{b^a\Gamma(a)}dx\\
	&=
	\frac{c^a}{b^a\Gamma(a)}\int_{0}^u
	(-\log y)^{a-1}{y^{c/b-1}}dy\\
	&\le
	\frac{c^a}{b^a\Gamma(a)}\int_{0}^u
	(1 - y)^{a-1}{y^{c/b-a-1}}dy = C I(u;a,c/b-a)
	\end{align*}
	where the final inequality comes from 
	$-\log y = \sum_{k=1}^\infty (1-y)^k/k \le (1-y)/y$ 
	for $0<y<1$.
\end{proof}

\begin{proof}[Proof of Theorem~3.1] 
	Let $T = \norm{\TT{X}\veps}_{\ell^2}$ and note
	for $n=2m$ that
	$T^2 = \sum_{i=1}^n a_{i,j}\veps_{i}\veps_{j}$
	where $a_{i,j}$ is the $ij$th entry of $X\TT{X}$.
	This is an homogeneous Rademacher chaos of order 2.
	
	As in \cite{SPEKTOR2016}, we note the following 
	correspondence.  Let 
	$
	\Omega = \{
	\veps \in \{\pm1\}^n \,|\, \sum\veps_i=0
	\},
	$
	then 
	$$
	\pi \in \mathbb{S}_n \longleftrightarrow
	\left\{
	\veps\in\Omega\,|\,
	\veps_i=1 \text{ if }\pi(i)\le m\text{ and }
	\veps_i=-1\text{ if }\pi(i)> m
	\right\}.
	$$
	Hence, for any $\pi\in\mathbb{S}_n$, we can write
	$$
	T^2(\pi) = 
	\sum_{i\le m,j\le m} a_{\pi(i),\pi(j)} -
	\sum_{i  > m,j\le m} a_{\pi(i),\pi(j)} -
	\sum_{i\le m,j  > m} a_{\pi(i),\pi(j)} +
	\sum_{i  > m,j  > m} a_{\pi(i),\pi(j)}
	$$
	and consider 
	$$
	\xv_\veps \abs{T^2}^p = 
	\xv_\pi \abs{T^2(\pi)}^p = 
	\xv_\pi \abs*{
		\sum_{i=1}^m\left\{
		a_{\pi(i),\pi(j)} - a_{\pi(i+m),\pi(j)} -
		a_{\pi(i),\pi(j+m)} + a_{\pi(i+m),\pi(j+m)} 
		\right\}
	}^p.
	$$
	Writing 
	$
	b_{k,k',\pi} = a_{\pi(k),\pi(k')} - a_{\pi(k+m),\pi(k')} -
	a_{\pi(k),\pi(k'+m)} + a_{\pi(k+m),\pi(k'+m)},
	$
	and $H_{k,\pi} = \sum_{i,j\in I_k}b_{i,j,\pi}$
	where the sum is over $I_{k} = \{
	1\le i,j\le m \,|\, i+j>k+1
	\}$
	with the empty sum being zero, 
	we note that 
	$$
	T^2(\pi) = b_{1,1,\pi} + H_{1,\pi} = 
	b_{1,1,\pi} + b_{1,2,\pi} + b_{2,1,\pi} + b_{2,2,\pi} + H_{2,\pi}
	= \ldots = \sum_{i,j=1}^m b_{i,j,\pi}.
	$$
	Then,
	\begin{align*}
	\xv_\pi\abs{T^2(\pi)}^p 
	&= \xv_\pi\abs{
		b_{1,1,\pi} + b_{1,2,\pi} + b_{2,1,\pi} + b_{2,2,\pi} + H_{2,\pi}
	}^p\\
	&= \xv_\pi\abs{
		b_{1,1,\pi} - b_{1,2,\pi} - b_{2,1,\pi} + b_{2,2,\pi} + H_{2,\pi}
	}^p\\
	&= \xv_\pi\xv_\delta\abs{
		\delta_1\delta_1b_{1,1,\pi} + 
		\delta_1\delta_2b_{1,2,\pi} - 
		\delta_2\delta_1b_{2,1,\pi} + 
		\delta_2\delta_2b_{2,2,\pi} + H_{2,\pi}
	}^p
	\end{align*}
	for $\delta_1,\delta_2$ iid Rademacher random variables.
	Continuing in this fashion, we have 
	$
	\xv_\pi\abs{T^2(\pi)}^p = 
	\xv_\pi\xv_\delta\abs{ 
		\sum_{i,j=1}^m \delta_i\delta_jb_{i,j,\pi}
	}^p.
	$ 
	From here we apply Corollary~3 from \cite{KWAPIEN1987}.
	
	First note that as the $\delta_i$ are just iid Rademacher
	random variables, the standard Khintchine (or Kahane-Khintchine) 
	inequality applies with coefficient $B_{2p}=((2p)!/2^pp!)^{1/2p}$.
	Then Corollary~3 from \cite{KWAPIEN1987} to this degree 2 
	polynomial chaos implies that
	$$
	\left[\xv_\pi\xv_\delta\abs*{  
		\sum_{i,j=1}^m \delta_i\delta_jb_{i,j,\pi}
	}^p\right]^{1/p}
	\le B_p^2 C 
	\left[\xv_\pi\xv_\delta\abs*{  
		\sum_{i,j=1}^m \delta_i\delta_jb_{i,j,\pi}
	}^2\right]^{1/2}
	$$
	where $C$ is a universal constant which for homogeneous degree $d$
	polynomial chaoses is $d^{3d}/d!$ or simply $2^5=32$ in our case.
	The expectation on the right hand side then becomes
	$$
	\left[\xv_\pi\xv_\delta\abs*{  
		\sum_{i,j=1}^m \delta_i\delta_jb_{i,j,\pi}
	}^2\right]^{1/2} 
	= 
	\left[\xv_\pi  
	\sum_{i,j=1}^m b_{i,j,\pi}^2
	\right]^{1/2} 
	\le
	\left[  
	\sum_{i,j=1}^m 4a_{i,j}^2
	\right]^{1/2}
	=  
	2\norm{X\TT{X}}_{S^2}.
	$$
	
	Absorbing the $2$ into $C$, we have the moment bounds
	$$
	\xv_\pi\abs{T(\pi)}^{2p} \le
	B_p^{2p}C^p\norm{X\TT{X}}_{S^2}^p.
	$$
	To adapt these moment bounds into a tail bound, we use the 
	standard moment generating function approach, but in 
	preparation we first recall the Legendre duplication formula  
	$\Gamma(2p)=2^{2p-1}\Gamma(p)\Gamma(p+1/2)/\sqrt{\pi}$ 
	and then note the following:
	\begin{multline*}
	\frac{2^{p}(\Gamma(p+1))^2}{\Gamma(2p+1)(\Gamma(p/2+1))^2} = 
	\frac{2^{p}p^2(\Gamma(p))^2}{2p\Gamma(2p)(p/2)^2(\Gamma(p/2))^2} =
	\frac{2^{p+1}(\Gamma(p))^2}{p\Gamma(2p)(\Gamma(p/2))^2} = \\
	\frac{2^{-p+2}\sqrt{\pi}\Gamma(p)}{
		p\Gamma(p+1/2)(\Gamma(p/2))^2
	} \le
	\frac{\sqrt{\pi}\Gamma(p)}{2^{p-2}p\Gamma(p+1/2)}
	\frac{p(2\ee)^p}{4\pi p^p} =
	\frac{\Gamma(p)}{\Gamma(p+1/2)} 
	\frac{\ee^p}{\sqrt{\pi}p^p} \le\\ 
	\frac{\sqrt{p+\pi^{-1}}\ee^p}{\sqrt{\pi}p^{p+1}} \le
	\left[\frac{1}{\sqrt{\pi}}+\frac{1}{\pi\sqrt{p}}\right]
	\frac{\ee^p}{p^{p+1/2}} \le
	\left[\frac{1}{\sqrt{\pi}}+\frac{1}{\pi\sqrt{p}}\right]
	\frac{\ee}{p!} \le 
	\left[\frac{\ee}{\sqrt{\pi}}+\frac{\ee}{\pi}\right]\frac{1}{p!},
	\end{multline*}  
	because, via Watson's formula \citep{WATSON1959},
	$$
	\frac{\Gamma(p)}{\Gamma(p+1/2)} =
	\frac{\Gamma(p+1)}{p\Gamma(p+1/2)}
	\le \frac{\sqrt{p+\pi^{-1}}}{p}.
	$$
	Let $\pi$
	and $\pi'$ be independent uniform random permutations from
	$\mathbb{S}_n$.  Then, updating $C$ as necessary, 
	\begin{align*}
	\xv_\pi\exp( \lmb T(\pi) )
	&\le
	\xv_\pi\exp( \lmb (T(\pi)-T(\pi') )\\
	&\le
	\sum_{p=1}^\infty
	\frac{\lmb^p}{p!}\xv_\pi\abs{T(\pi)-T(\pi')}^p\\
	&\le
	\sum_{p=1}^\infty
	\frac{\lmb^{2p}2^{2p}C^p}{(2p)!}
	\frac{(p!)^2}{2^p((p/2)!)^2}
	\norm{X\TT{X}}_{S^2}^p\\
	&\le
	\sum_{p=1}^\infty
	\frac{\lmb^{2p}C^p}{p!}\norm{X\TT{X}}_{S^2}^p\\
	&\le \ee^{\lmb^2C\norm{X\TT{X}}_{S^2}}, 
	\end{align*}
	which gives the desired sub-Gaussian concentration as in 
	Proposition~\ref{prop:subGaussSymWeak}.
\end{proof}

\section{Additional Data Experiments}
\label{app:additionalData}

\subsection{Multivariate Data}
We test the performance of the bound in Theorem~2.2 
on simulated multivariate Gaussian data in 
$\ell^q(\real^{12})$ for $q=1,2,\infty$.
The sample size is $m_1=m_2=50$.  
Figure~\ref{fig:multVTest} displays the result of running such a two-sample
test for each of the three norms compared to the standard 
permutation test approximated by sampling 1000 permutations.
This was replicated 1000 times and the average $\log_2$ p-values
are plotted.  We see that the Kahane bound does not achieve as 
much power as the standard permutation test.  However, after applying
the empirical beta adjustment from Section~2.4 
with moments computed via 10 permutations, the computed p-values
align perfectly with the standard permutation test.

\begin{figure}
	\begin{center}
		\includegraphics[width=0.4\textwidth]{\PICDIR/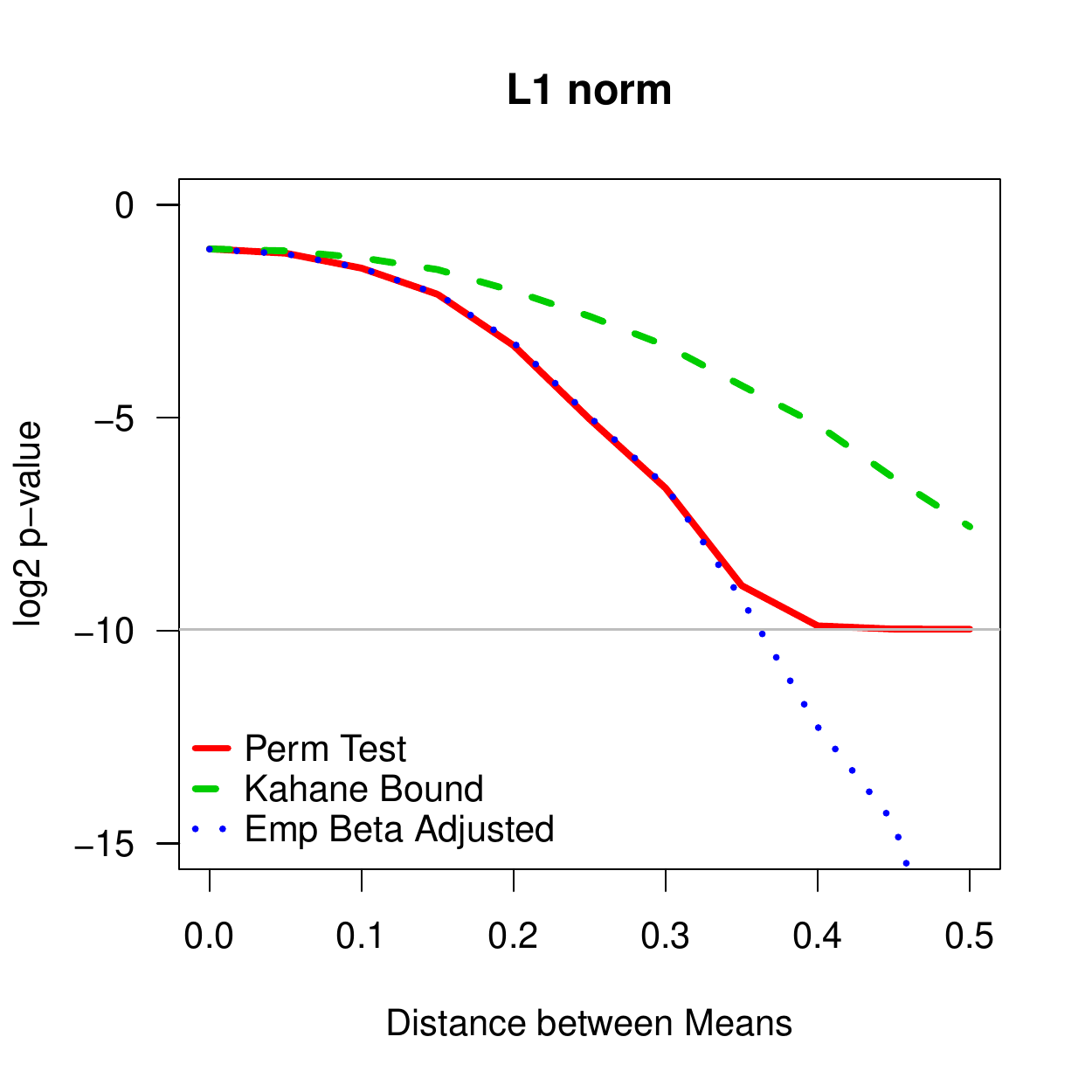}
		\includegraphics[width=0.4\textwidth]{\PICDIR/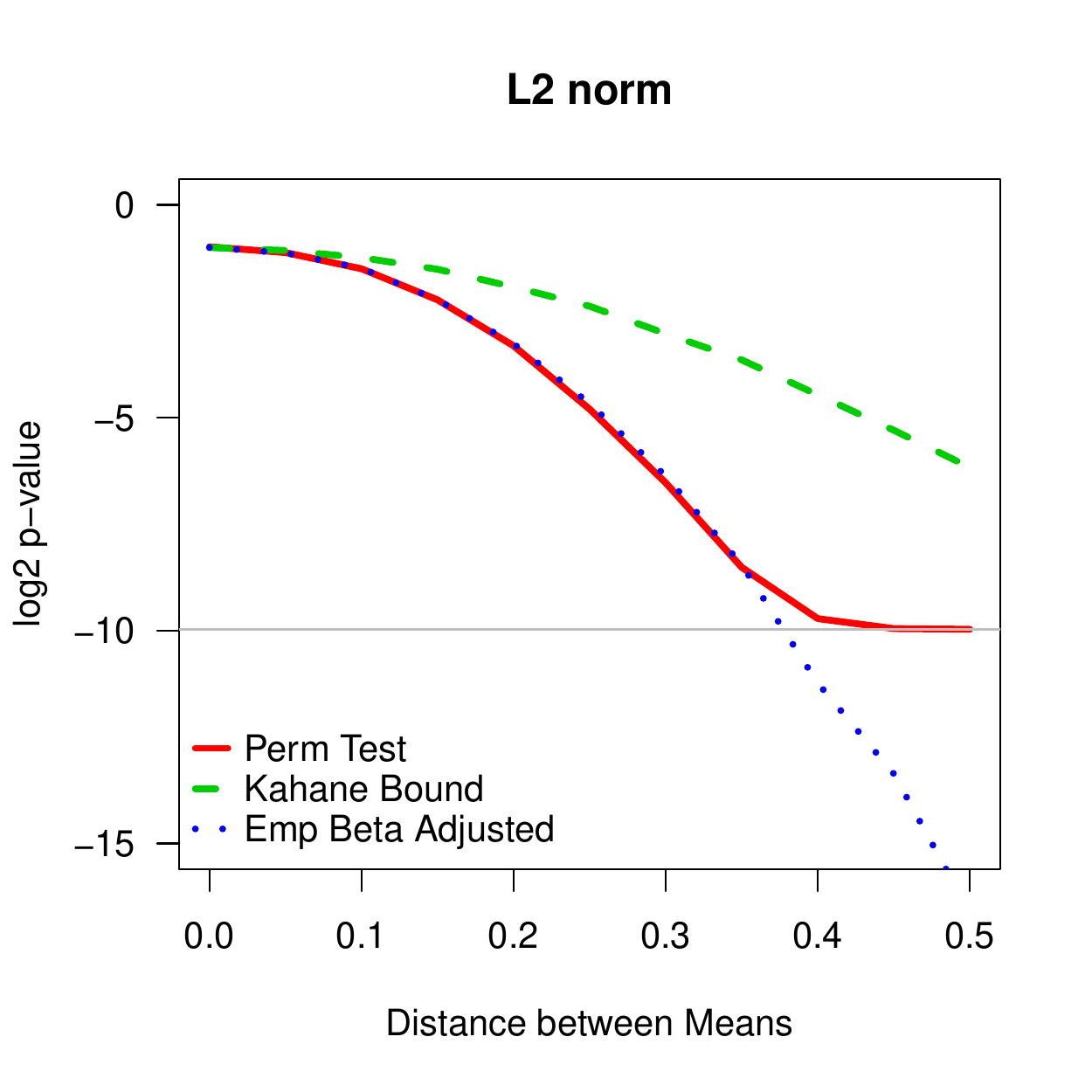}
		\includegraphics[width=0.4\textwidth]{\PICDIR/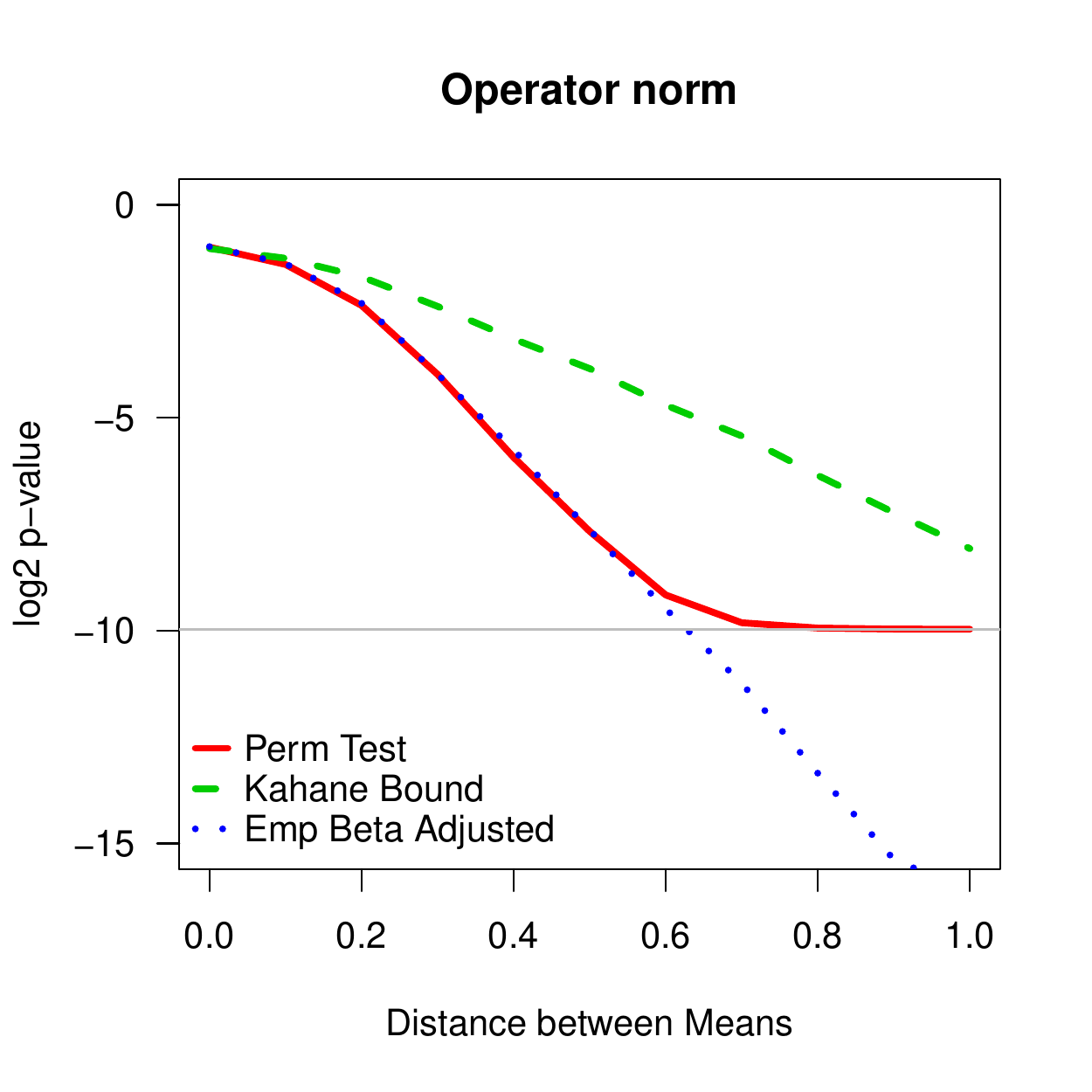}
	\end{center}
	\caption{
		\label{fig:multVTest}
		Multivariate two sample test for normal data with 
		balanced sample sizes $m_1=m_2=50$ 
		for $\ell^1$, $\ell^2$, and $\ell^\infty$ norms.
		The plots compare the permutation test (red) with 
		1000 permutations to
		the Kahane bound from Theorem~2.2 (green) 
		and the beta adjusted Kahane bound (blue) 
		across 1000 replications.  
	}
\end{figure}

\subsection{Berkeley Growth Curves Null Setting}
\label{app:berkNull}

In this section, we repeat the data analysis from
Section~4.2. 
However, we 
first remove the sex labels from the Berkeley growth curve
dataset.  Hence, when sampling two sets of size 30, each 
resample will contain both male and female curves.  Thus,
there should be on average no statistical difference
between the two sets.  Over 100 replications for each
of the three norms $L^1$, $L^2$, and $L^\infty$ as 
well as the two bounds---unadjusted Kahane and 
beta adjusted---we have Figure~\ref{fig:berkNull},
which plots the empirical p-values against the 
theoretical p-values from the uniform distribution
on $[0,1]$.  We see large deviations for the unadjusted
Kahane bound in the $L^2$ and $L^\infty$ norms 
yielding an overly conservative hypothesis test.
Table~\ref{tab:berkNull} displays the results of 
goodness-of-fit testing for the six sets of null
p-values with a similar conclusion.

\begin{table}
	\begin{center}
		\fbox{
			\begin{tabular}{l|rrr||rrr}
				& \multicolumn{3}{c}{Kahane Bound} 
				& \multicolumn{3}{c}{Beta Adjusted Bound}\\\hline
				& $L^1$ & $L^2$ & $L^\infty$ 
				& $L^1$ & $L^2$ & $L^\infty$ \\
				KS test & 3.6\% & $<$0.001\% & $<$0.001\%
				& 8.7\% & 87.9\%     & 9.2\% \\
				AD test & 3.8\% & $<$0.001\% & $<$0.001\%
				& 2.2\% & 79.7\%     & 5.1\% \\
			\end{tabular}
		}
	\end{center}
	\capt{
		\label{tab:berkNull}
		This table contains p-values from both the 
		Kolmogorov-Smirnov and the Anderson-Darling 
		goodness-of-fit tests for the 100 computed
		p-values against the Uniform$[0,1]$ distribution.
	}
\end{table}

\begin{figure}
	\includegraphics[width=\textwidth]{\PICDIR/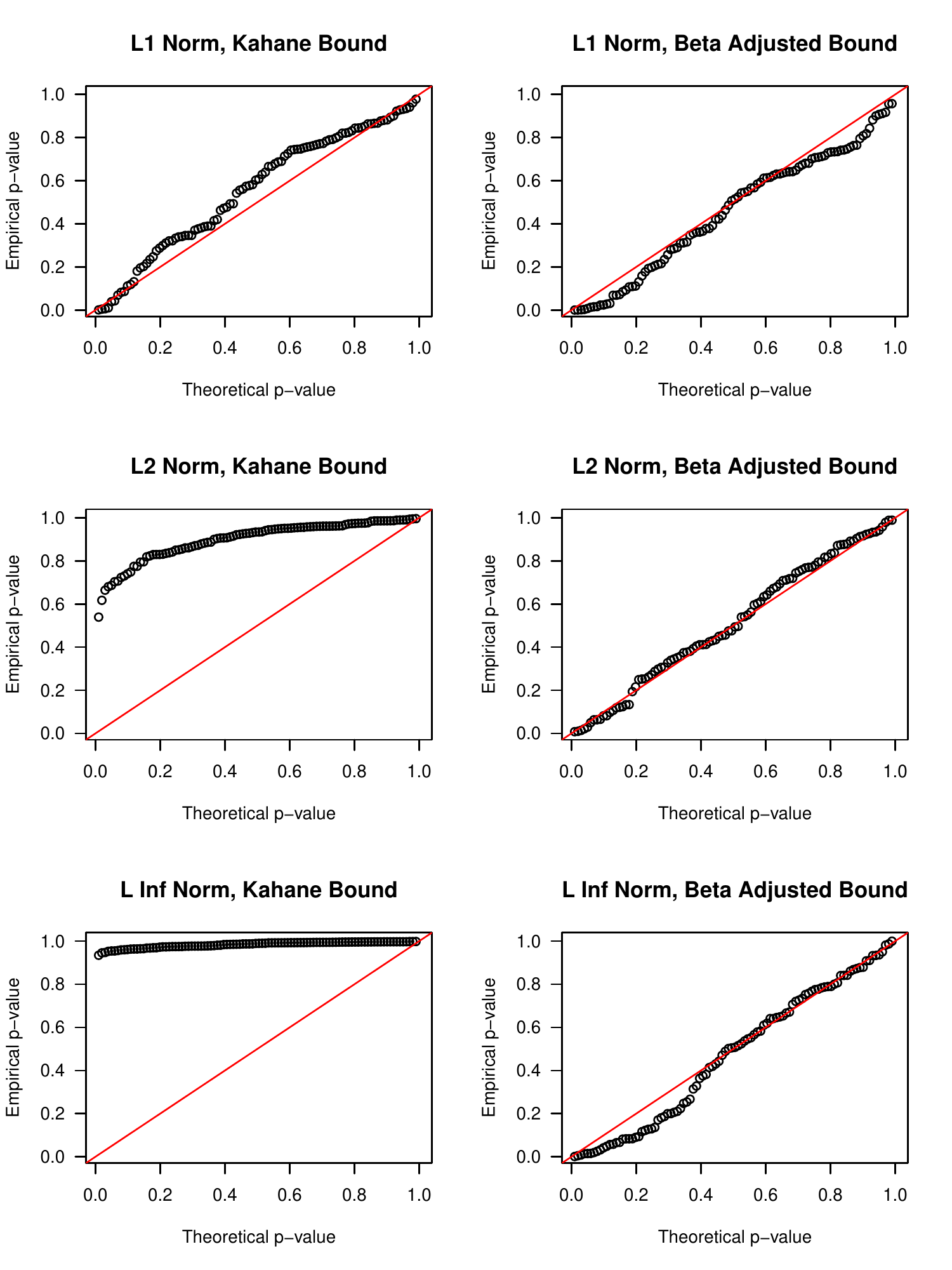}
	\capt{
		\label{fig:berkNull}
		100 simulated null p-values are plotted against the 
		theoretical values from the uniform distribution on 
		$[0,1]$.  There are massive deviations from uniformity
		for unadjusted Kahane bound with the $L^2$ and $L^\infty$ 
		norms.
	}
\end{figure}

\subsection{Phoneme Curves Null Setting}
\label{app:phonemeNull}

Similar to Appendix~\ref{app:berkNull}, we aim to test 
for whether or not the empirical beta adjusted p-values
for the phoneme curves from Section~4.3 
follow a Uniform[0,1] distribution in the null setting.
To do this, we repeat the test from Section~4.3 
but remove the label information.  Hence, the two samples of 
size 40 will comprise operators from both phonemes, and 
there should be no significant difference between the
two samples.

Table~\ref{tab:phmNull} reports p-values from the 
Kolmogorov-Smirnov and Anderson-Darling goodness-of-fit 
tests comparing the empirical distribution of the 100 
two sample test adjusted p-values to a uniform distribution
on the unit interval.  Most of these empirical tests 
yield insignificant p-values especially after taking 
multiple testing into account indicating no noticeable
deviation from uniformity.  Hence, the empirical beta
adjustment is able to account for the overly conservative
nature of the unadjusted Kahane bounds.

\begin{table}
	\begin{center}
		\fbox{
			\begin{tabular}{crrrrcrrrrcrrrr}
				& \multicolumn{14}{c}{\bf Kolmogorov-Smirnov}\\
				& \multicolumn{4}{c}{Trace Norm} &&\multicolumn{4}{c}{Hilbert-Schmidt Norm} &&
				\multicolumn{4}{c}{Operator Norm} \\
				& \textipa{A} &  \textipa{O} & \textipa{d} & \textipa{S} &\phantom{X}
				& \textipa{A} &  \textipa{O} & \textipa{d} & \textipa{S} &\phantom{X}
				& \textipa{A} &  \textipa{O} & \textipa{d} & \textipa{S} \\
				\textipa{O} &10.5&   &   &   && 84.8&   &   &   &&  10.7&   &   &      \\
				\textipa{d} & 0.5& 43.5&   &   && 68.8& 30.4&   &   && 29.4& 42.2&   &  \\
				\textipa{S} &70.5& 25.5& 31.0& && 32.7& 77.3& 30.3&   && 17.5&47.8& 55.2&\\
				\textipa{i} &16.1& 60.3& 41.7& 71.1&&81.0& 77.4& 0.3& 9.1&&
				58.3& 6.2& 86.2& 0.6\\\hline\hline
				& \multicolumn{14}{c}{\bf Anderson-Darling}\\
				& \multicolumn{4}{c}{Trace Norm} &&\multicolumn{4}{c}{Hilbert-Schmidt Norm} &&
				\multicolumn{4}{c}{Operator Norm} \\
				& \textipa{A} &  \textipa{O} & \textipa{d} & \textipa{S} &\phantom{X}
				& \textipa{A} &  \textipa{O} & \textipa{d} & \textipa{S} &\phantom{X}
				& \textipa{A} &  \textipa{O} & \textipa{d} & \textipa{S} \\
				\textipa{O} &16.1&   &   &   && 60.5&   &   &   &&  1.7&   &   &      \\
				\textipa{d} & 0.3& 8.0&   &   && 12.0& 2.4&   &   && 3.1& 2.2&   &  \\
				\textipa{S} &52.2& 19.0& 8.7& && 7.8& 2.4& 14.0&   && 4.3&17.3& 18.0&\\
				\textipa{i} &5.6& 14.8& 11.8& 10.4&&22.2& 15.6& 0.8& 0.05&&
				65.1& 6.5& 65.7& 1.0
		\end{tabular}}
	\end{center}
	\capt{
		\label{tab:phmNull}
		A list of p-values from the Kolmogorov-Smirnov and the 
		Anderson-Darling tests for the two sample tests comparing two
		different phonemes with a sample size of 
		$m_1=m_2=10$ under the trace, Hilbert-Schmidt, and operator
		norms.
	}
\end{table}

\subsection{Simulated Covariance Operator Data}
\label{app:pigoliSim}

In this section, we recreate the two-sample simulation study 
performed in \cite{PIGOLI2014} Section~3 to test our methodology.
Let $\Sigma_M$ and $\Sigma_F$ be the empirical covariance operators
for the male and female Berkeley growth curves, respectively.   
For $\gamma\in[0,6]$, we generate two sets of $n=30$ curves
from a Gaussian process
with mean zero and with covariance operator $\Sigma_M$ for the
first group and 
$
\Sigma(\gamma) = 
[\Sigma_M^{1/2} + \gamma\Delta][\Sigma_M^{1/2} + \gamma\Delta]^\star
$
where $\Delta=\Sigma_F^{1/2}R - \Sigma_M^{1/2}$ and 
$R$ is the operator that minimizes the Procrustes distance
between $\Sigma_M$ and $\Sigma_F$.  Specifically, 
$R = UV^\star$ where $U$ and $V$ come from the singular
value decomposition of 
$(\Sigma_F^{1/2})^\star(\Sigma_M^{1/2}) = UDV^\star$.

For each $\gamma$, we test 
$
H_0: \Sigma_M=\Sigma(\gamma)
$
against
$
H_1:\Sigma_M\ne\Sigma(\gamma)
$
via a standard permutation test as in \cite{PIGOLI2014} with $512$ permutations
and via our Kahane-Khintchine bound.  This is replicated 1000 times
resulting in Figure~\ref{fig:pigoli}.  We see that for the trace, 
Hilbert-Schimdt, and operator norms, the power loss for using our 
upper bound is not much different from the standard permutation test.
At worst, the p-values are 2-4 times larger than necessary.

\begin{figure}
	\includegraphics[width=0.32\textwidth]{\PICDIR/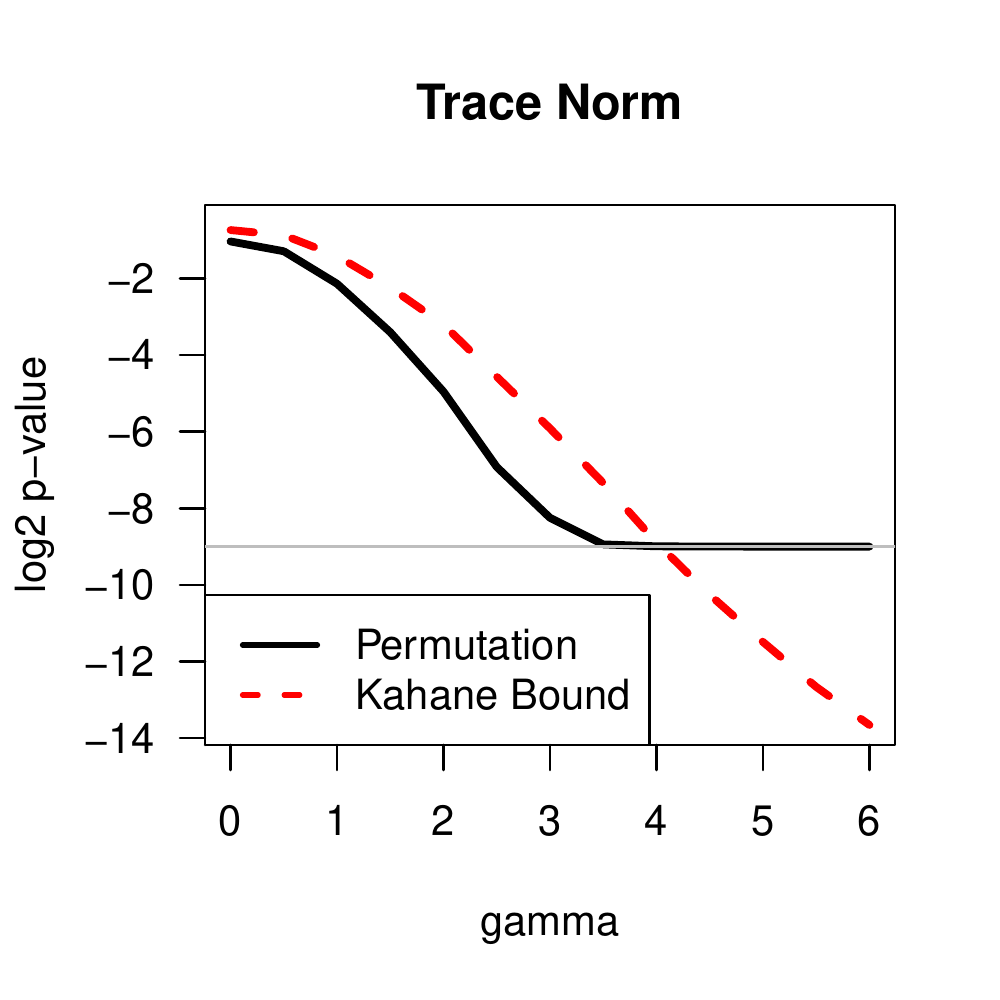}
	\includegraphics[width=0.32\textwidth]{\PICDIR/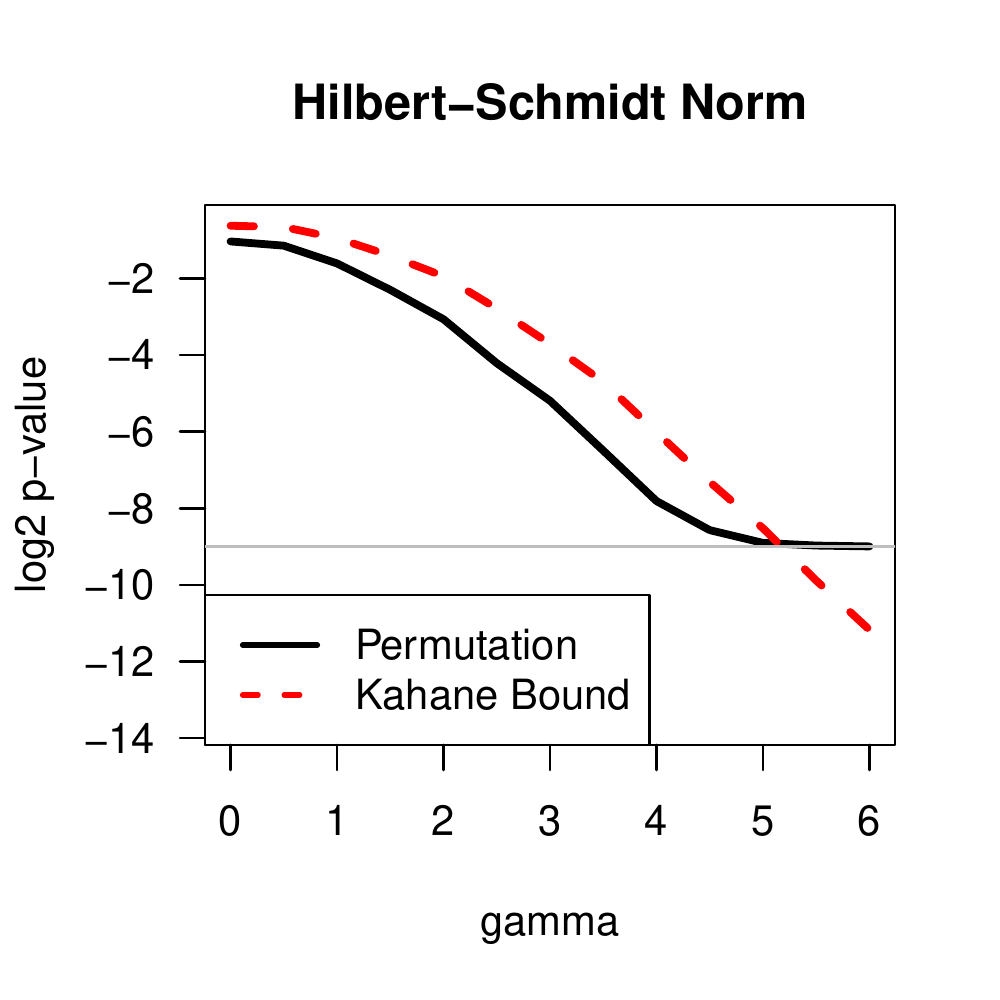}
	\includegraphics[width=0.32\textwidth]{\PICDIR/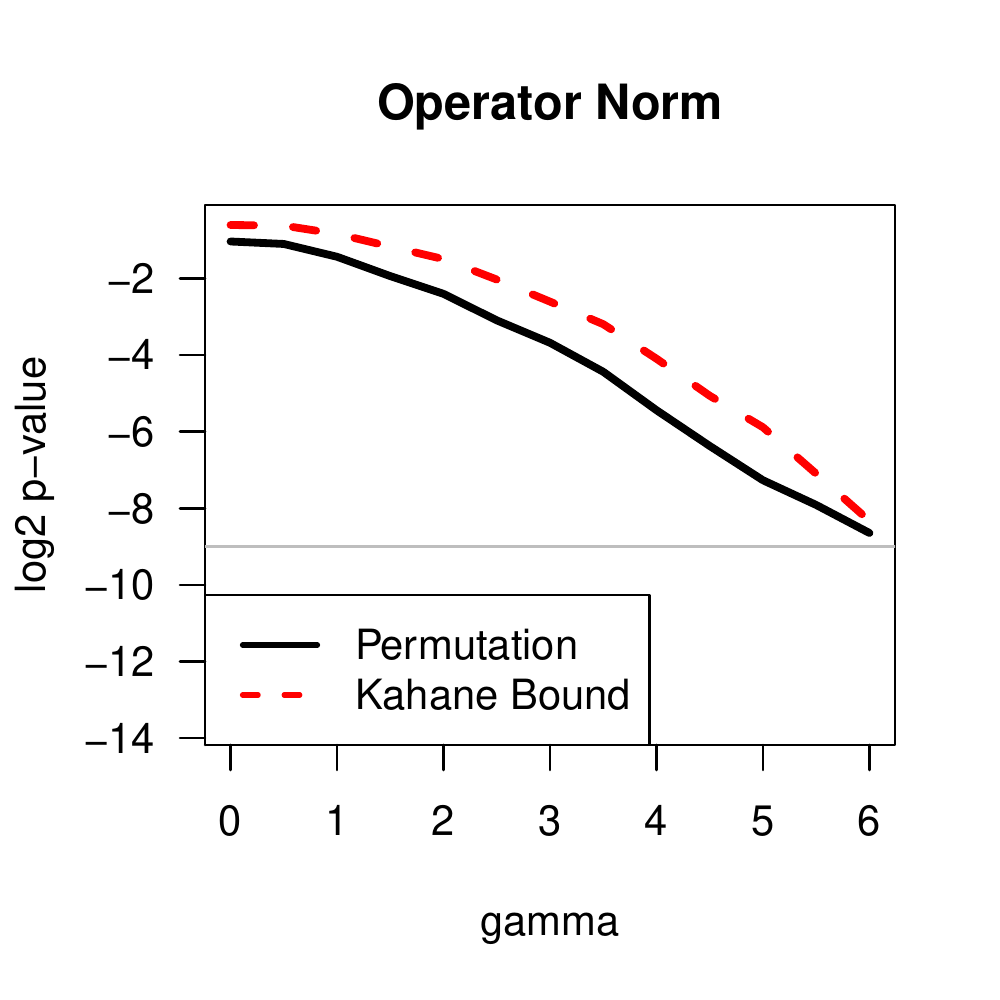}
	\capt{
		\label{fig:pigoli}
		Plotted p-values for a two sample test for equality of 
		covariance operators coming from \cite{PIGOLI2014}.  
		From left to right, the trace, Hilbert-Schmidt, 
		and Operator norms are 
		considered in the three plots.
	}
\end{figure}

\section{Vowel Data}
\label{app:vowelData}

\subsection{Other Schatten Norms}

In this section, we repeat the analysis performed 
in Section~5 
by replacing the trace
norm with both the Hilbert-Schmidt and operator norms.
In 
Figures~\ref{fig:vowelCorrplot2} and~\ref{fig:vowelCorrplotI},
we display results analogous to those seen previously for
the trace norm.
Most notably, as we consider larger values of $q$ for
the $q$-Schatten norms, the number of null hypotheses 
that we fail to reject increases indicating less 
statistical power to distinguish vowel phonemes.
This is in line with much past work using Schatten 
norms on functional data \citep{PIGOLI2014,PIGOLI2018}.

\begin{figure}
	\begin{center}
		\includegraphics[width=\textwidth]{\PICDIR/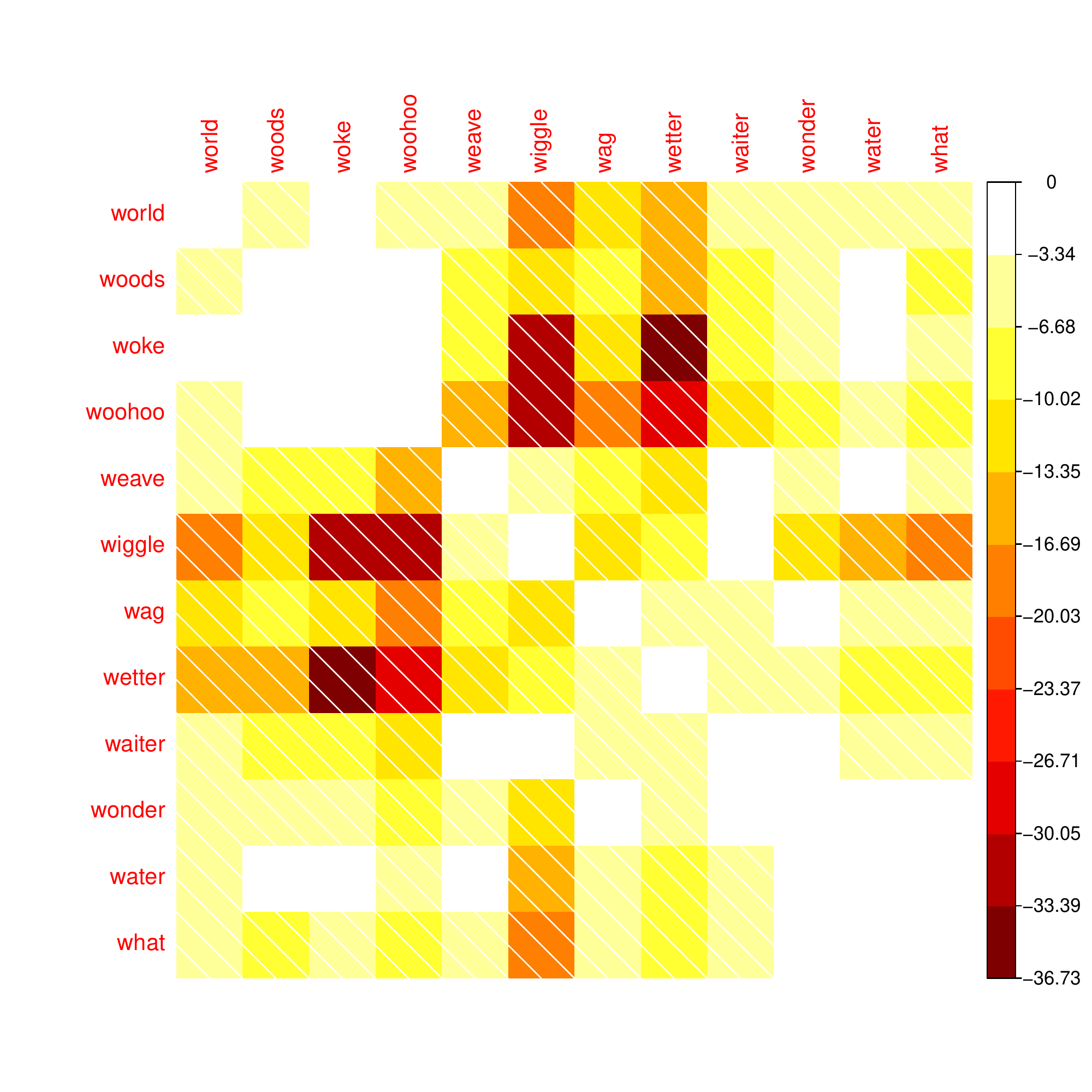}
	\end{center}
	\caption{
		\label{fig:vowelCorrplot2}
		$\log_{10}$ p-values for pairwise two sample tests
		between vowel pairs under the Hilbert-Schmidt norm.
	}
\end{figure}

\begin{figure}
	\begin{center}
		\includegraphics[width=\textwidth]{\PICDIR/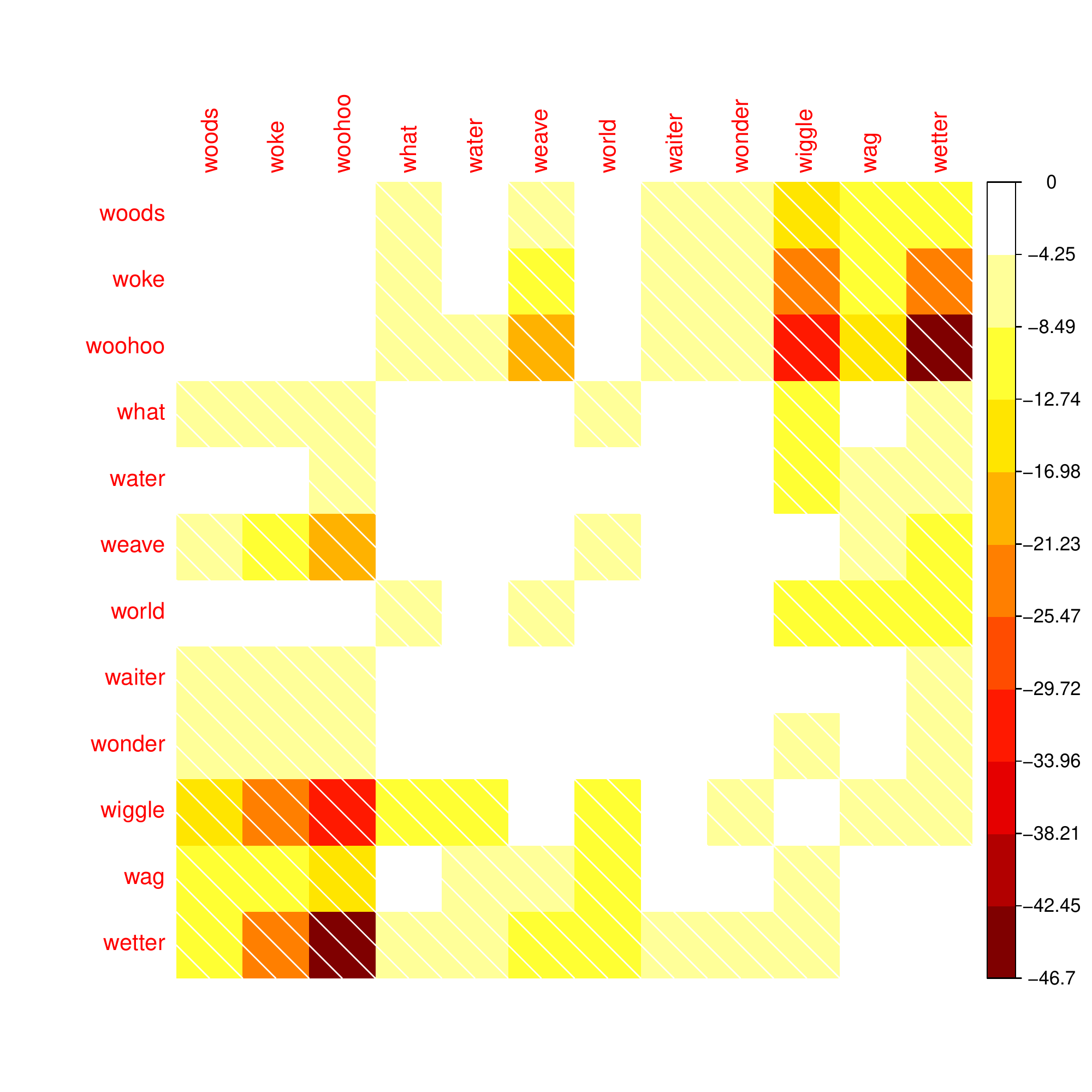}
	\end{center}
	\capt{
		\label{fig:vowelCorrplotI}
		$\log_{10}$ p-values for pairwise two sample tests
		between vowel pairs under the operator norm.
	}
\end{figure}

\subsection{Null Setting}

To check that our methodology, specifically the empirical
beta adjustment from Section~2.4, 
achieves the correct empirical size
and thus is neither conservative nor anti-conservative,
we first randomize all of the labels within each of the Latin 
squares from Section~5. 
Then, we repeat the
same analysis as before.  The 66 p-values produced for each of the
1, 2, and $\infty$ Schatten norms is displayed in Figure~\ref{fig:vowelQQPlot}. These QQ plots compare our empirical p-values to the theoretical
quantiles of the Uniform[0,1] distribution.  For each of the three
norms, we do not see much deviation from uniformity.
Furthermore, for testing goodness-of-fit with the uniform
distribution, the Kolmogorov-Smirnov test returns 
p-values of 0.434, 0.782, and 0.290 and the Anderson-Darling 
test p-values 0.161, 0.511, and 0.241 for the trace, Hilbert-Schmidt,
and operator norms, respectively.  None of these tests are significant
indicating no noticeable deviation from uniformity.

\begin{figure}
	\begin{center}
		\includegraphics[width=0.475\textwidth]{\PICDIR/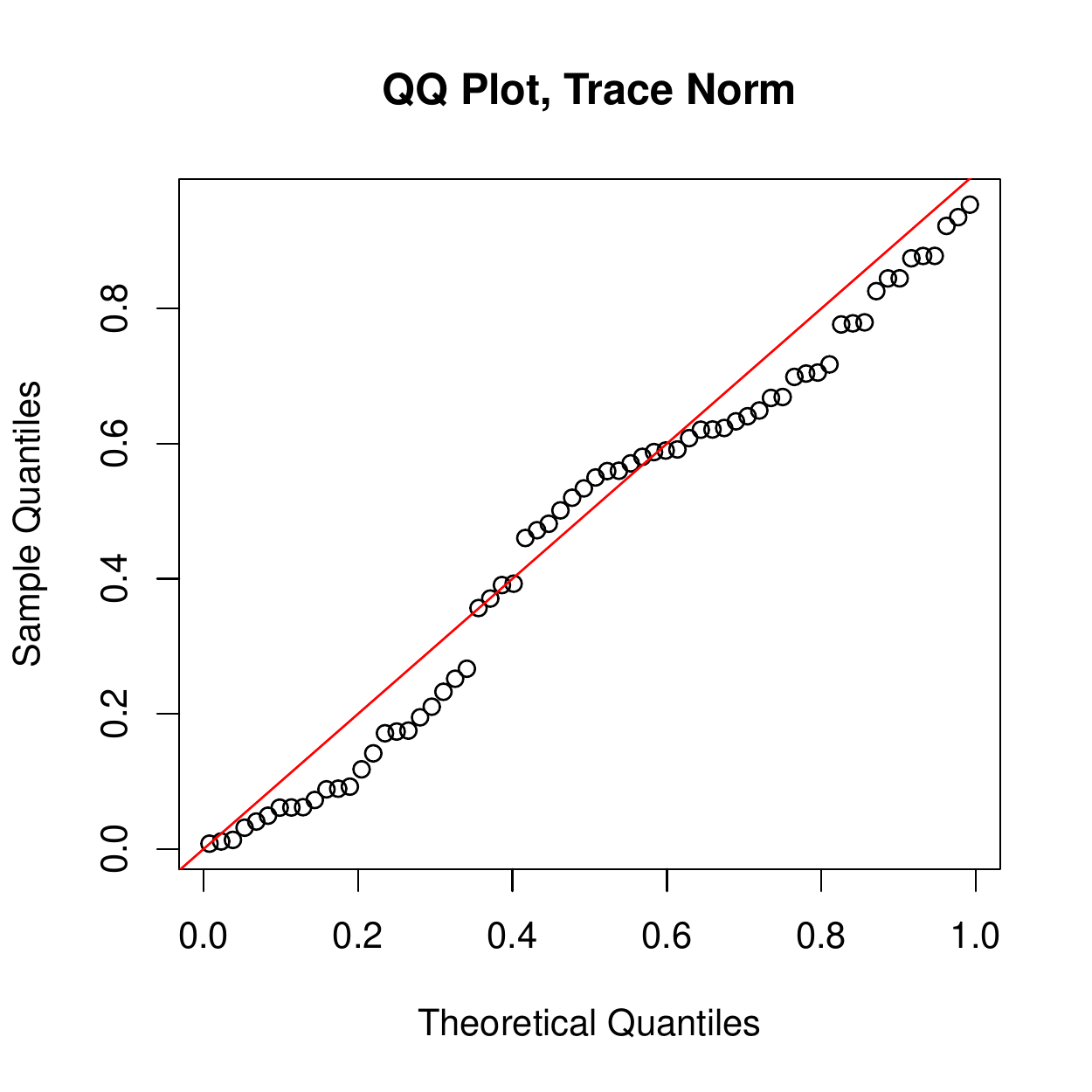}
		\includegraphics[width=0.475\textwidth]{\PICDIR/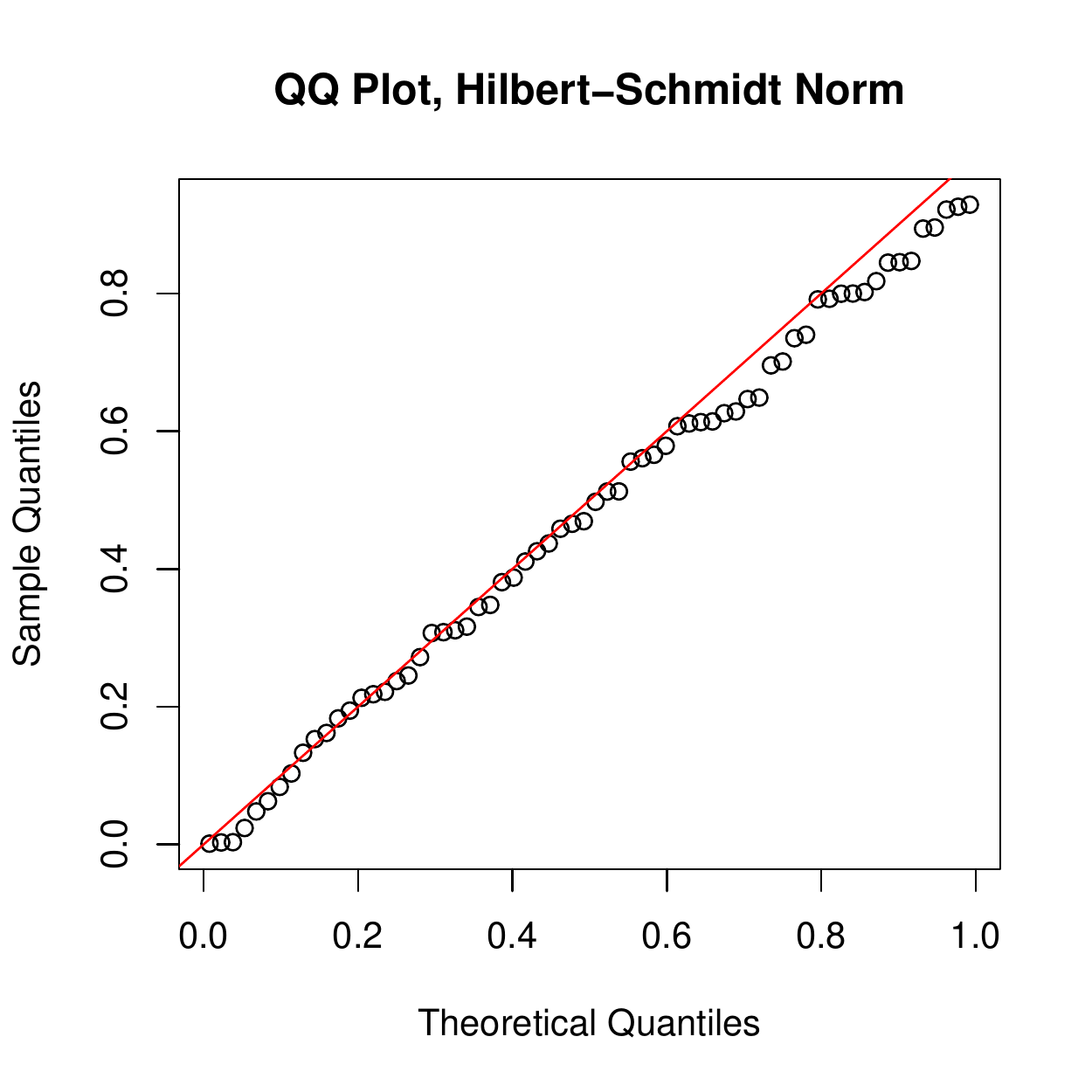}
		\includegraphics[width=0.475\textwidth]{\PICDIR/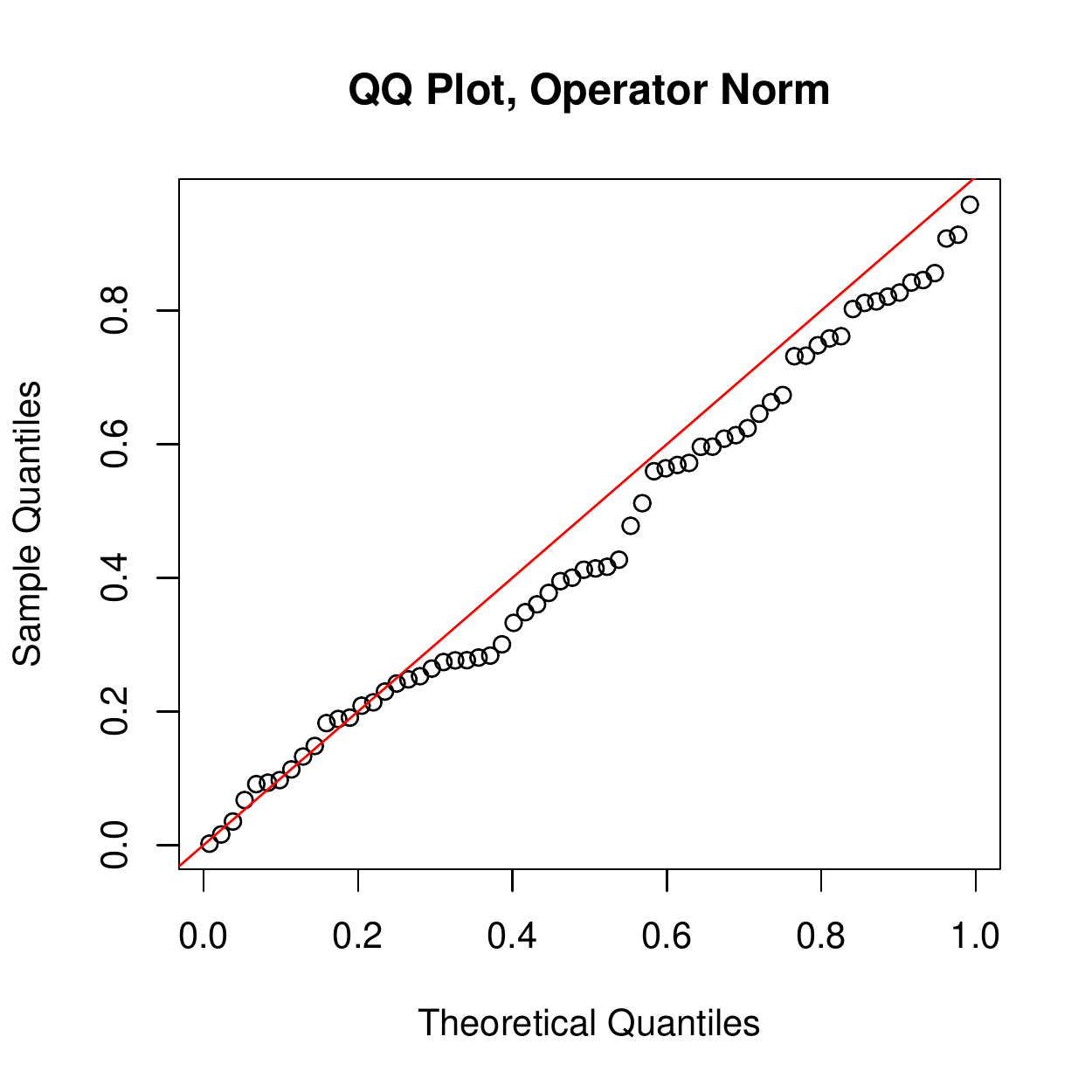}
	\end{center}
	\capt{
		\label{fig:vowelQQPlot}
		QQ Plots comparing the 66 null p-values from the vowel
		data after beta adjustment to the quantiles of the uniform
		distribution.
	}
\end{figure}

\end{document}